\documentclass[12pt]{article}

\pdfoutput=1

\usepackage{amsmath}

\usepackage{latexsym} 

\usepackage{amssymb,mathrsfs} 
\usepackage{mathrsfs}

\usepackage{amsthm} 

\usepackage[all]{xy}
\CompileMatrices

\usepackage{eso-pic}
\usepackage{graphicx,color}
\usepackage{type1cm}

\definecolor{red}{rgb}{1.,0.,0.}
\definecolor{darkred}{cmyk}{0.,1.,1.,.4}
\definecolor{cyan}{cmyk}{1.,0.,0.,.5}
\definecolor{green}{cmyk}{1.,0.,1.,.5}
\definecolor{lightgreen}{cmyk}{.5,0.,.5,.0}
\definecolor{blue}{cmyk}{1.,1.,0.,0.}
\definecolor{darkblue}{cmyk}{1.,1.,0.,.5}
\definecolor{lightblue}{cmyk}{.5,.5,0.,0.}
\definecolor{LIGHTBLUE}{cmyk}{.5,.5,0.,0.}
\definecolor{paleblue}{cmyk}{.2,.2,0.,0.}
\definecolor{paleyellow}{cmyk}{0.,0.,.2,0.}
\definecolor{red}{cmyk}{1.,0.,1.,0.}
\definecolor{faded}{gray}{.75}
\definecolor{orange}{rgb}{1.,.30,0.}
\definecolor{purple}{rgb}{.48,0.,.48}


\newcommand{\mc}{\mathcal}
\newcommand{\mb}{\mathbb} 
\newcommand{\mf}{\mathfrak}
\newcommand{\ms}{\mathscr} 		
\newcommand{\msans}{\mathsf} 

\newcommand{\ds}{\displaystyle}

\newcommand{\Hom}{\operatorname{Hom}}

\newcommand{\mto}{\leadsto}

\theoremstyle{plain}
\newtheorem{theorem}{Theorem}

\newtheorem{corollary}[theorem]{Corollary}
\newtheorem{lemma}[theorem]{Lemma}
\newtheorem{proposition}[theorem]{Proposition}

\newtheorem{convention}{Convention}

\theoremstyle{definition}
\newtheorem{definition}[theorem]{Definition}

\newtheorem{notation}{Notation}

\theoremstyle{remark}
\newtheorem*{remark}{Remark}
\newtheorem*{observation}{Observation}

\theoremstyle{plain}



\newcommand{\bounded}{L^{\infty}_{\text{loc}}}
\newcommand{\continuous}{C^0}
\newcommand{\smooth}{C^{\infty}}

\newcommand{\pluriharmonic}{\sheaf{H}}
\newcommand{\sheaf}{\ms}


\newcommand{\ed}{\msans{d}} 

\newcommand{\abs}[1]{\vert #1\vert}

\newcommand{\logabs}[1]{\log\vert #1\vert}

\newcommand{\rest}[1]{{\big\arrowvert_{#1}}}


\DeclareMathOperator{\supp}{Supp}

\newcommand{\id}[1]{{\operatorname{id}_{#1}}}

\newcommand{\N}{\mathbb{N}}
\newcommand{\R}{\mathbb{R}}
\newcommand{\C}{\mathbb{C}}
\newcommand{\Z}{\mathbb{Z}}

\newcommand{\Proj}{\mathbb{P}}

\newcommand{\cp}[1]{^{\circ{#1}}}

\newcommand{\topcat}{\operatorname{Top}}

\newcommand{\functor}[1]{\operatorname{\msans{#1}}}
\newcommand{\fixed}{\functor{Fixed}}

\newcommand{\clDec}{\mf{c}}

\newcommand{\deRham}{H_{\text{deRham}}}
\newcommand{\forms}[1]{\ms{F}^{#1}}
\newcommand{\clForms}[1]{\forms{#1}_\clDec}
\newcommand{\hol}{\mc{O}} 
\newcommand{\curDeg}[1]{\ms{C}^{#1}}
\newcommand{\curDim}[1]{\ms{C}_{#1}}

\newcommand{\shHone}[2]{\ensuremath{H^1(#1,\sheaf{#2})}}
\newcommand{\hbundles}[1]{H^1(#1,\pluriharmonic)} 
\newcommand{\shsub}[1]{_{\sheaf{#1}}} 
\newcommand{\shGamma}[1]{\Gamma(\sheaf{#1})} 
\newcommand{\mGamma}[1]{\Gamma {#1}}
\newcommand{\mshGamma}[2]{\mGamma{{#1}\shsub{#2} }}
\newcommand{\mshHone}[2]{\ensuremath{H^1{#1}\shsub{#2}}}


\newcommand{\onto}{\twoheadrightarrow}

\newcommand{\G}{\mc{G}}
\newcommand{\greens}{\mf{g}}





\newcommand{\lto}[1]{\overset{#1}{\to}} 
\newcommand{\lift}{\check}
\newcommand{\hollift}{\breve}

\newcommand{\positive}{\R_{>0}}
\newcommand{\positiveK}{\K_0}

\newcommand{\nempty}{{\not=\emptyset}}
\newcommand{\K}{\mathbb{K}}


\newcommand{\sca}{\mb{K}} 
\newcommand{\interior}{\operatorname{int}}
\newcommand{\diam}{\operatorname{diam}}
\newcommand{\divisor}{\msans}
\newcommand{\current}{\msans}

\newcommand{\maxMult}{\Upsilon}

\newcommand{\smear}{\mc{S}}
\newcommand{\dvect}[1]{\frac{\partial}{\partial #1}}

\newcommand{\nimble}{\ms{N}} 
\newcommand{\lenient}{\ms{L}} 
\newcommand{\comassNorm}[1]{\|{#1}\|_{\text{comass}}}
\newcommand{\supNorm}[1]{\|{#1}\|_{\text{sup}}}
\newcommand{\pder}[2]{\frac{\partial #1}{\partial #2}}
\newcommand{\cat}{\mc{S}}
\newcommand{\holder}{H}

\begin{document}

\title{Dynamical Objects for Cohomologically Expanding Maps.}
\author{John W. Robertson}
\maketitle

\bigskip
\noindent{John W. Robertson\footnote{Research partially supported by a Wichita State University ARCS Grant.}} \\
\noindent{Wichita State University} \\
\noindent{Wichita, Kansas 67260-0033} \\
\noindent{Phone: 316-978-3979} \\
\noindent{Fax: 316-978-3748} \\ 
\noindent{robertson@math.wichita.edu} \\

\begin{abstract}
The goal of this paper is to construct invariant dynamical
objects for a (not necessarily invertible) smooth self map
of a compact manifold. We prove a result 
that takes advantage of differences in rates of expansion in the terms
of a sheaf cohomological long exact sequence to create 
unique lifts of finite dimensional invariant subspaces of one term of the
sequence to invariant subspaces of the preceding term.
This allows us to take invariant cohomological classes
and under the right circumstances construct unique
currents of a given type, including unique measures of a given type, 
that represent those classes and are invariant
under pullback. A dynamically interesting self map may have a plethora
of invariant measures, so the uniquess of the constructed
currents is important. It means that if local growth is
not too big compared to the growth rate of the cohomological
class then the expanding cohomological class gives sufficient
``marching orders'' to the system to prohibit the formation
of any other such invariant current of the same type (say from some local
dynamical subsystem).
Because we use subsheaves of the sheaf of currents we 
give conditions under which a subsheaf will
have the same cohomology as the sheaf containing it. Using a smoothing
argument this 
allows us to show that the sheaf cohomology of the currents
under consideration can be canonically identified with the
deRham cohomology groups.
Our main theorem can be applied in both the smooth
and holomorphic setting.
\end{abstract}

\noindent{MSC: 37C05, 32H50, 18F20, 55N30 }

\medskip



%
%

\section{Introduction}

Our purpose is to construct invariant dynamical objects
for a self map $f\colon X\to X$ of a compact topological space.
We make use of sheaf cohomology and differences in rates 
of expansion in different terms of
a long exact sequence to construct invariant sections of a sheaf.
We will show that there are invariant degree $1$ currents (or eigencurrents)
corresponding to each expanding eigenvector
of $H^1(X,\R)$. We also show that successive preimages
of sufficiently regular degree one currents converge to 
one of these eigencurrents.
We show that if most of the expansion $f\colon X\to X$ is ''along''
an invariant cohomological class $v\in H^k(X,\R)$ then
there is an invariant current $c$ in that cohomology class
and other sufficiently regular currents in the same class
converge to $c$ under successive pullback.

The group cohomology
of $\Z$ acting on a space of functions on $X$ via pullback
has been studied in the context of dynamical 
systems \cite{katok-combinatorial}. This work seems
related to ours, but to be pursued in an essentially
different direction. Our map $f$ is not assumed to be invertible, so
there is not necessarily a $\Z$ action, only an $\N$ action. 
Also, we use sheaves rather than functions and make
substantial use of cohomological tools. Most importantly,
we are particularly interested in the construction of invariant 
currents, especially when the current is some sense unique.

Our results are actually motivated by results in 
higher dimensional holomorphic dynamics showing the existence of 
a unique closed positive $(1,1)$ current
under a variety of circumstances (just about any
recent paper on higher dimensional holomorphic dynamics
either proves such results or makes essential use 
of such results, see e.g. 
\cite{forn:henon},
\cite{hubbard_oberste-vorth:henon1}, \cite{hubbard_oberste-vorth:henon2},
\cite{bedfordsmillie1}, \cite{bedfordsmillie2}, \cite{bedfordsmillie3}, 
\cite{bedfordlyubichsmillie4}, \cite{bedfordsmillie5}, \cite{bedfordsmillie6}, 
\cite{bedfordsmillie7}, 
\cite{cantat}, \cite{mcmullen:k3},
\cite{forn:cdhd}, \cite{forn:cdhd1},
\cite{forn:cdhd2}, \cite{forn:examples}, \cite{forn:classification},
\cite{jonsson:example}, \cite{jonsson-favre}, 
\cite{ueda:fatou}, \cite{ueda:normfam}, \cite{ueda:fatmaps},
and \cite{dinhsibony-greenCurrents}).

While invariant measures have been a focal point in dynamics, it seems
that invariant currents also have an imporant role to play. We will
show under mild conditions that if some degree one cohomological class 
of a smooth self map $f$ of a compact manifold 
is invariant and expanded there is necessarily a invariant
degree one current of a certain type representing that class. We obtain analogous
results for higher degree currents given bounds on the local growth rates
of $f$. The uniqueness of these classes is significant. It seems clear that one could 
modify a map locally near a fixed point to obtain other invariant currents of
the same type without affecting the topology. Thus our results also say that
any local modification that created an invariant current of the given type
{\it must violate the local growth conditions}. In other words, as long as
things do not grow too fast compared to the growth rate of the cohomology class,
the expansion of the cohomology class gives sufficient ``marching orders'' to
points that no other invariant cohomological class of the given type
can be created by purely local dynamical behavior. Our results give explicit
conditions under which uniqueness is guaranteed.
For degree one currents, no restriction on local growth rates is necessary
for our results. 

\section{Cohomomorphisms}

We will make use of sheaves in this paper. There are two standard
definitions of sheaves on a topological space $X$, 
one as a topological space (\cite{bredon},\cite{cas}),
and one as a functor on the category $\topcat_X$ satisfying various axioms 
(\cite{hartshorne},\cite{weibel}). 
Since we will often want to make use of a topology on sections of a sheaf
$\sheaf{A}$ that differs from the topology these inherit 
using the topological definition of a sheaf,
we will instead use the functor definition of a sheaf.

Our sheaves will always be sheaves of $\sca$ modules over some 
fixed field $\sca$. We will require that $\sca$ have an absolute
value for which $\sca$ is complete. 

Given a continuous map $f\colon X\to Y$ and sheaves $\sheaf{A}$ and
$\sheaf{B}$ on $X$ and $Y$ respectively, an $f$-cohomomorphism
is a generalized notion of a pullback from $\sheaf{B}$ to $\sheaf{A}$
through $f$. Different types of geometric objects pull back differently,
and this allows us to handle all cases at once.

We take the following facts from from \cite{bredon} page 14--15.

\begin{definition}
If $\sheaf{A}$ and $\sheaf{B}$ are sheaves on $X$ and $Y$
then an ``f-cohomomorphism'' $k\colon \sheaf{B}\to\sheaf{A}$
is a collection of homomorphisms $k_U\colon \sheaf{B}(U)\to \sheaf{A}(f^{-1}(U))$,
for $U$ open in $Y$, compatible with restrictions.
\end{definition}

Note that if $\sheaf{A}$ is a sheaf on $X$ and $f\colon X\to Y$
is continuous then there is a canonical cohomomorphism
$f_*\sheaf{A} \mto \sheaf{A}$ where $f_*\sheaf{A}$ is the direct
image of $\sheaf{A}$, i.e. given an open $U\subset Y$,
$f_*\sheaf{A}(U) = \sheaf{A}(f^{-1}(U))$.

\begin{remark}
Given a continuous 
map $f\colon X\to Y$ of topological spaces $X$ and $Y$ and
sheaves $\sheaf{A}$ and $\sheaf{B}$ on $X$ and $Y$ respectively,
all $f$-cohomomorphisms $f\colon \sheaf{B}\mto\sheaf{A}$ are
given by a composition of the form 
\[\sheaf{B}\overset{j}{\to}f_*\sheaf{A}\overset{f_*}{\to}\sheaf{A}\]
where $j\colon \sheaf{B}\to f_*\sheaf{A}$
is a sheaf homomorphism, and each such composition is seen to 
given an $f$-cohomomorphism.
\end{remark}

The usual notion of ``a morphism of sheaves on $X$'' is the same
as an $\id{X}$ cohomomorphism of sheaves on $X$.

\subsection{Cohomomorphisms and $\Gamma$.}

The functor $\Gamma$ returns the global sections of 
that sheaf. 
Given a morphism $\phi\colon\sheaf{A}\to\sheaf{A'}$ of sheaves on $X$,
$\mGamma{\phi}$ is just the homomorphism 
$\sheaf{A}(X)\to\sheaf{A'}(X)$. Given sheaves $\sheaf{A}$ and $\sheaf{B}$
on $X$ and $Y$ and given $f\colon X\to Y$ continuous then 
for a sheaf cohomomorphism $F\colon \sheaf{B}\to\sheaf{A}$ one
defines $\mGamma{F}$ to be the homomorphism $\sheaf{B}(Y)\to\sheaf{A}(X)$.
This extends $\Gamma$ to be a functor on the category of topological
spaces with an associated sheaf where morphisms are given by cohomomorphisms.

\section{Invariant Global Sections}

Fix a continuous self map $f\colon X\to X$ of 
a topological space $X$. We will be interested 
in $f$ self cohomomorphisms of sheaves $\sheaf{A}$ on $X$.
As we will typically have several sheaves of interest on $X$,
each with a corresponding $f$ self cohomomorphism,
we let $f\shsub{A}\colon \sheaf{A}\mto\sheaf{A}$
be the default notation for an $f$-cohomomorphism of $\sheaf{A}$.

Assume that $X$ is a manifold and that 
\[\sheaf{A}\overset{p}{\to}\sheaf{B}\overset{q}{\to}\sheaf{C}\]
is a short exact sequence of sheaves on $X$. 
Let $f\colon X\to X$ be a continuous self map of $X$ 
and assume further
that we are given $f$ self cohomomorphisms of each of these
sheaves and that
\begin{equation}
\label{shortexact}
\xymatrix{ {\sheaf{A}} \ar[r]_p \ar[d]^{f\shsub{A}} 
& {\sheaf{B}} \ar[r]_q \ar[d]^{f\shsub{B}} 
& {\sheaf{C}} \ar[d]^{f\shsub{C}} \\
{\sheaf{A}} \ar[r]_p 
& {\sheaf{B}} \ar[r]_q 
& {\sheaf{C}} \\
}
\end{equation}
commutes. We will say that a commutative diagram as in~\eqref{shortexact} is
an $f$ self-cohomomorphism of the sequence $\sheaf{A}\to\sheaf{B}\to\sheaf{C}$.

Applying the functor $\Gamma$ to this diagram, the rows can be extended in 
the usual long exact sequence. The resulting diagram
is commutative (\cite{bredon} page 62).

\begin{equation}
\label{longexact}
\xymatrix{ {0} \ar[r] 
& {\sheaf{A}(X)} \ar[r]_{\mGamma{p}} \ar[d]^{\mshGamma{f}{A}}
& {\sheaf{B}(X)} \ar[r]_{\mGamma{q}} \ar[d]^{\mshGamma{f}{B}}
& {\sheaf{C}(X)} \ar[r]_{\delta} \ar[d]^{\mshGamma{f}{C}} 
& {\shHone{X}{A}} \ar[r]_{H^1p} \ar[d]^{\mshHone{f}{A}} 
& {\dotsb}\\
{0} \ar[r] 
& {\sheaf{A}(X)} \ar[r]_{\mGamma{p}} 
& {\sheaf{B}(X)} \ar[r]_{\mGamma{q}} 
& {\sheaf{C}(X)} \ar[r]_{\delta} 
& {\shHone{X}{A}} \ar[r]_{H^1p} 
& {\dotsb }\\
}
\end{equation}

One can
think of $\sheaf{B}$ as providing local potentials for 
members of $\sheaf{C}$ and of $\sheaf{A}$ as being those
potentials which give rise to the zero member of $\sheaf{C}$.
It will be assumed that the reader is familiar with interpreting
$\shHone{X}{A}$ as classifying equivalence classes of 
bundles with transition functions in $\sheaf{A}$. 
We will frequently refer to members of $\shHone{X}{A}$
as bundles. Sections
of such bundles will be assumed to be given locally by local
sections of $\sheaf{B}$,
so that every member $c$ of $\shGamma{C}$ is given locally by
potentials in $\sheaf{B}$, and these potentials, taken
together, are a section of the 
corresponding bundle $\delta(c)\in \shHone{X}{A}$.

\begin{convention}
We will frequently refer to a member $v$ of $\shHone{X}{A}$
as a {\it bundle}, to a member $c\in\shGamma{C}$
as a {\it divisor} and if $\delta(c)=v$ we will
call $c$ a {\it divisor of the bundle $v$}. We think
this substantially adds to the readability of the paper.
\end{convention}

\begin{definition}
The {\it support} of a divisor $c\in\shGamma{C}$
is defined 
to be the complement of the union of all open sets $U$
such that $c\rest{U}=0$.
\end{definition}

\begin{lemma}
If an open set 
$U$ lies outside the support of some $c\in \shGamma{C}$
then $f^{-1}(U)$ lies outside the support of $f\shsub{C}(c)$
\end{lemma}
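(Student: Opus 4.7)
The plan is to unpack the definition of support, use the sheaf axiom to conclude that $c$ restricts to zero on all of $U$, and then exploit the fact that an $f$-cohomomorphism is compatible with restrictions and maps zero to zero.

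More concretely, first I would observe that the complement $X\setminus\supp(c)$ is, by definition, the union of all open sets on which $c$ restricts to $0$. Since $U\subseteq X\setminus\supp(c)$, I can cover $U$ by open sets $V_\alpha\subseteq X\setminus\supp(c)$ with $c\rest{V_\alpha}=0$. By the separation (identity) axiom for the sheaf $\sheaf{C}$, the section $c\rest{U}$ is determined by its restrictions to the $V_\alpha$, each of which is $0$, so $c\rest{U}=0$.

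Next I would apply the definition of an $f$-cohomomorphism. The data of $f\shsub{C}$ is a family of homomorphisms $(f\shsub{C})_V\colon\sheaf{C}(V)\to\sheaf{C}(f^{-1}(V))$ for each open $V\subseteq X$, compatible with restrictions. Compatibility with restriction from $X$ to $U$ yields the commutative square
\[
\xymatrix{
\sheaf{C}(X) \ar[r]^{(f\shsub{C})_X} \ar[d] & \sheaf{C}(X) \ar[d] \\
\sheaf{C}(U) \ar[r]_{(f\shsub{C})_U} & \sheaf{C}(f^{-1}(U))
}
\]
where the vertical arrows are restriction. Chasing $c$ through this diagram gives $f\shsub{C}(c)\rest{f^{-1}(U)} = (f\shsub{C})_U(c\rest{U}) = (f\shsub{C})_U(0) = 0$, using that each $(f\shsub{C})_V$ is a homomorphism of $\sca$-modules.

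Finally, since $f^{-1}(U)$ is open and $f\shsub{C}(c)\rest{f^{-1}(U)}=0$, the open set $f^{-1}(U)$ is contained in the union defining the complement of $\supp(f\shsub{C}(c))$, i.e.\ $f^{-1}(U)$ lies outside $\supp(f\shsub{C}(c))$. There is no real obstacle here; the statement is essentially a direct unwinding of the definitions, and the only point requiring care is the initial step that ``outside the support'' means ``the section restricts to zero'' on that open set, which needs the sheaf identity axiom.
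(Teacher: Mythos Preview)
Your proof is correct and follows essentially the same approach as the paper: both arguments use that $c\rest{U}=0$, invoke the compatibility of the $f$-cohomomorphism with restrictions to conclude $f\shsub{C}(c)\rest{f^{-1}(U)}=0$, and then read off that $f^{-1}(U)$ lies outside the support. You are simply more explicit about the sheaf identity axiom and the commutative square than the paper, which compresses the same reasoning into a single sentence.
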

\begin{proof}
We note that by the definition of an $f$-cohomomorphism
$f\shsub{C}\colon \sheaf{C}\to\sheaf{C}$, 
since the cohomomorphism $f\shsub{C}$ on $\sheaf{C}(U)$ 
is a homomorphism from $\sheaf{C}(U)$ to 
$\sheaf{C}(f^{-1}(U))$ and the induced action
of $f\shsub{C}$ on $\shGamma{C}$ restricted to $U$
must agree with its action $\sheaf{C}(U)\to\sheaf{C}(f^{-1}(U))$, 
then if an open set $U$ is outside the support of $c$ then $f^{-1}(U)$
is outside the support of of $f\shsub{C}(c)$.
\end{proof}

The following conditions for a given 
$v\in \shHone{X}{A}$ will be of interest:

\begin{definition}
We will refer to a bundle $v\in \shHone{X}{A}$ for which $(H^1p)(v)=0$
as being {\it closed}. 
\end{definition}

Note that this notion depends upon the exact
sequence $\sheaf{A}\to\sheaf{B}\to\sheaf{C}$, and not just on $v$.
If $\sheaf{B}$ is $\gamma$ acyclic then every member of $\shHone{X}{A}$ is closed.

\begin{definition}
\label{noBasePoints}
We will call a bundle $v\in \shHone{X}{A}$ {\it base point free} 
if for every $x\in X$ there is some divisor $c\in \shGamma{C}$ associated
to $v$ whose support does not contain $x$.
\end{definition}

\begin{lemma}
\label{softImpliesNoBasePoints}
If $\sheaf{B}$ is soft, $X$ is a regular topological space, 
and $a\in\shHone{X}{A}$ is a closed bundle then $a$ is base point free.
\end{lemma}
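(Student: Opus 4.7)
The plan is to combine exactness of the long sequence (to get some divisor of $a$) with the softness of $\sheaf{B}$ (to modify it so its support misses a given point). Regularity of $X$ enters only to sandwich a closed neighborhood of $x$ inside an open one on which a local potential is defined.

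\emph{Step 1: Existence of a divisor.} Since $a$ is closed, $(H^1 p)(a) = 0$. By exactness of the long exact sequence~\eqref{longexact} at $\shHone{X}{A}$, the class $a$ lies in the image of $\delta$, so there is some $c \in \shGamma{C}$ with $\delta(c) = a$. Moreover, by exactness at $\shGamma{C}$, two divisors of $a$ differ by an element of the form $\mGamma{q}(b)$ with $b \in \shGamma{B}$. Thus to show $a$ is base point free it suffices, given $x \in X$, to produce $b \in \shGamma{B}$ such that $c - \mGamma{q}(b)$ vanishes on a neighborhood of $x$.

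\emph{Step 2: Local potential near $x$.} Because $q\colon \sheaf{B}\to\sheaf{C}$ is surjective as a sheaf map, the stalk map $q_x$ is surjective. Hence there is an open neighborhood $U$ of $x$ and a local section $b_U \in \sheaf{B}(U)$ with $q(b_U) = c\rest{U}$.

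\emph{Step 3: Shrink via regularity, then extend via softness.} Since $X$ is regular, we can find a closed neighborhood $K$ of $x$ with $K \subset U$; equivalently, $K$ has nonempty interior $V$ containing $x$ with $\overline{V} = K \subset U$. Restrict $b_U$ to a section of $\sheaf{B}$ over the closed set $K$. By softness of $\sheaf{B}$, this restriction extends to a global section $b \in \shGamma{B}$ with $b\rest{K} = b_U\rest{K}$.

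\emph{Step 4: Conclusion.} Set $c' := c - \mGamma{q}(b)$. Since $\delta \circ \mGamma{q} = 0$, we have $\delta(c') = \delta(c) = a$, so $c'$ is a divisor of $a$. On the open set $V \subset K$ we compute
\[
c'\rest{V} \;=\; c\rest{V} - q(b)\rest{V} \;=\; q(b_U)\rest{V} - q(b\rest{V}) \;=\; q\bigl((b_U - b)\rest{V}\bigr) \;=\; 0,
\]
using $b_U = b$ on $K \supset V$. Thus $V$ is an open set containing $x$ on which $c'$ vanishes, so $x$ is not in the support of $c'$. Since $x \in X$ was arbitrary, $a$ is base point free.

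The main (very mild) obstacle is bookkeeping around restriction of $b_U$ to the closed set $K$: softness is phrased for sections over closed sets, and $\sheaf{B}(K)$ is typically defined as the direct limit of $\sheaf{B}(W)$ over open $W \supset K$, so $b_U$ maps into $\sheaf{B}(K)$ through this limit and the equality $b = b_U$ on $K$ is meant in that sense. Everything else is a straightforward assembly of exactness, sheaf surjectivity, regularity, and softness.
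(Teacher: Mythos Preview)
Your proof is correct and follows essentially the same approach as the paper's: pick a divisor of $a$ by exactness, lift it locally near $x$ via surjectivity of $q$ on stalks, use regularity to find $\overline{V}\subset U$, extend the local potential from $\overline{V}$ to a global section by softness, and subtract. Your write-up is in fact slightly cleaner than the paper's, which phrases Step~2 in terms of germs rather than directly asserting $q(b_U)=c\rest{U}$ on a small enough $U$.
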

\begin{proof}
From the long exact sequence
there is some $c'\in \shGamma{C}$ with $\delta(c')=a$
and given any point $x\in X$, from the fact that 
$\sheaf{B}\overset{q}{\onto}\sheaf{C}$ the germ $c'_x$ of $c'$ at $x$
is the image under $q_x$ of some germ $b''_x$ of $\shGamma{B}$ at $x$.
Choose an open neighborhood $U$ of $x$ on which there is 
some $b'\in \sheaf{B}(U)$ with $b'_x=b''_x$. The topological
assumption on $X$ implies that there is a neighborhood $V\Subset U$
of $x$. The fact that $\sheaf{B}$ is soft implies there is
some $b\in\shGamma{B}$ such that $b\rest{{\overline{V}}}=b'\rest{{\overline{V}}}$.
Then $c=c'-b\in\shGamma{C}$ has $\delta(c)=a$ and $x\not\in\supp(c)$.
\end{proof}

\begin{definition}
We will refer to a bundle $a\in \shHone{X}{A}$ such that $f\shsub{A}(a)=\lambda\cdot a$
for some $\lambda\in \C$ as a $\lambda$ {\it eigenbundle}. 
\end{definition}

We also find it useful to introduce a relevant notion of expansiveness
of a map $f\colon X\to X$
relative to a base point free closed eigenbundle $v\in\shHone{X}{A}$.

\begin{definition}
Given a base point free closed eigenbundle $v\in \shHone{X}{A}$
then we say that $f$ is cohomologically
expansive at $x$ for $v$ if for any open neighborhood $U$ of $x$
and any divisor $c\in\shGamma{C}$ of $v$, the set
$U$ intersects the support of $f\shsub{C}^k(c)$
for all sufficiently large $k$. 
\end{definition}

\begin{remark}
It is a corollary of the definition that the set of points at which
$f$ is cohomologically expansive for $v$ is closed and forward invariant.
If $\supp f\shsub{C}^k(c)=f^{-k}(\supp(c))$ for each $c\in \shGamma{C}$
then the set of cohomologically expansive points is totally invariant.
\end{remark}

The notion of being cohomologically expansive at $x$ for $v$ means 
roughly that under iteration by $f$
small neighborhoods $U$ of $x$ always grow to cover enough of $X$ 
that the pullback of the bundle $v$ to the set $f^k(U)$
is a nontrivial bundle on $f^k(U)$ whenever $k$ is large. 

We show that if $\sheaf{B}$ is soft and $X$ is a compact metric space
then some minimal expansion takes place at points
where $f$ is cohomologically expansive for a closed
eigenbundle $a\in \shHone{X}{A}$. 

We use $B_\epsilon(x)$ to denote
the ball of radius $\epsilon$ about $x$.

\begin{lemma}
\label{ceImpliesNotEquicontinuous}
Let $X$ be a compact metric space.
If $\sheaf{B}$ is soft and $v$ is a closed eigenbundle
then there exists $\delta > 0$ such that for every $\epsilon > 0$
there exists some $K > 0$ such that if $f$ is cohomologically
expanding at $x$ then for every $k > K$, $\diam f^k(B_\epsilon (x)) > \delta$.
\end{lemma}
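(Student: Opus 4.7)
The plan is to reduce cohomological expansiveness, which is stated in terms of an unspecified neighborhood and an arbitrary divisor, to a finite, quantitative statement by extracting a finite family of divisors whose supports ``cover'' the complement of every point. The Lebesgue number lemma then converts failure of diameter expansion into a statement about supports that will directly contradict expansiveness.

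To begin, note that $X$ is compact metric, hence regular, and $\sheaf{B}$ is soft, so Lemma~\ref{softImpliesNoBasePoints} applies to the closed eigenbundle $v$ and shows that $v$ is base point free. For every $y\in X$ choose a divisor $c_y\in\shGamma{C}$ of $v$ with $y\notin\supp(c_y)$, and set $W_y:=X\setminus\supp(c_y)$. The collection $\{W_y\}$ is an open cover of $X$, so by compactness finitely many $W_1,\ldots,W_n$ already cover $X$, with corresponding divisors $c_1,\ldots,c_n\in\shGamma{C}$. Let $\delta>0$ be (half of) a Lebesgue number of this cover, so that every subset of $X$ of diameter at most $\delta$ is contained in some $W_i$. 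This $\delta$, depending only on $f$ and $v$, is our candidate. The key observation is then: if $\diam f^k(B_\epsilon(x))\le\delta$ there is an index $i$ with $f^k(B_\epsilon(x))\subset W_i=X\setminus\supp(c_i)$, so $B_\epsilon(x)\subset f^{-k}(X\setminus\supp(c_i))$, and iterating the preceding lemma (preimages of open sets outside a support remain outside the support of the pulled-back divisor) gives $B_\epsilon(x)\cap\supp f\shsub{C}^k(c_i)=\emptyset$.

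To promote this to a uniform $K$, fix $\epsilon>0$ and argue by contradiction. Suppose no $K$ works, so there exist points $x_m$ at which $f$ is cohomologically expansive for $v$ and integers $k_m\to\infty$ with $\diam f^{k_m}(B_\epsilon(x_m))\le\delta$. The key observation supplies indices $i_m\in\{1,\ldots,n\}$ with $B_\epsilon(x_m)\cap\supp f\shsub{C}^{k_m}(c_{i_m})=\emptyset$. By compactness of $X$, pass to a subsequence with $x_m\to x_\star$; by pigeonhole on the finite set $\{1,\ldots,n\}$, pass further so that $i_m=i_\star$ is constant. Since the cohomologically expansive set is closed (by the remark following the definition), $x_\star$ is itself cohomologically expansive for $v$. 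For $m$ large enough that $d(x_m,x_\star)<\epsilon/2$, we have $B_{\epsilon/2}(x_\star)\subset B_\epsilon(x_m)$, hence $B_{\epsilon/2}(x_\star)\cap\supp f\shsub{C}^{k_m}(c_{i_\star})=\emptyset$ for infinitely many $k_m\to\infty$, contradicting cohomological expansiveness of $f$ at $x_\star$ for $v$ applied to the neighborhood $B_{\epsilon/2}(x_\star)$ and the divisor $c_{i_\star}$.

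The hard point is the uniformity in $x$: the definition of cohomological expansiveness only supplies a threshold depending a priori on both $x$ and the chosen divisor $c$, whereas the lemma asks for a single $K$ valid for all expansive $x$ at once. The two-stage compactness argument above removes these two dependencies separately—softness plus compactness of $X$ eliminates the divisor dependence by replacing the arbitrary $c$ with one of finitely many $c_i$, and the closed-limit argument eliminates the point dependence by concentrating failures near a single limiting expansive point.
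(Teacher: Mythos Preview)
Your proof is correct and follows essentially the same route as the paper: extract a finite cover $\{X\setminus\supp c_i\}$ from base-point-freeness, take $\delta$ to be a Lebesgue number, and run a contradiction argument by passing to a convergent subsequence of bad points and pigeonholing on the finite index set. The only cosmetic differences are that you invoke closedness of the expansive set explicitly (the paper uses it implicitly for the limit point $x_\infty$) and that you pigeonhole on the index before restricting to the smaller ball, whereas the paper does these in the opposite order.
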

\begin{proof}
The bundle $v$ is base point free by Lemma~\ref{softImpliesNoBasePoints}.
Using compactness we can conclude that there is a finite open
cover $U_1,\dotsc,U_\ell$ of $X$ such that for each $j$, $U_j$ is disjoint
from $\supp c_j$ for some $c_j\in\shGamma{C}$ with $\delta(c_j)=v$.
We will prove the lemma by contradiction. Let $\delta$
be the Lebesgue number of the cover $U_1,\dotsc,U_\ell$.
If the lemma is false there is some $\epsilon > 0$ and
some increasing sequence $k_n$ and points
$x_n$ at which $f$ is cohomologically expansive such that
$\diam f^{k_n}(B_\epsilon (x_n))\leq \delta$ for each $n$.
By going to a subsequence if necessary we can assume
$x_n$ converges to a point $x_\infty$. Letting $U=B_{\frac{1}{2}\epsilon} (x_\infty)$
we see that $U\subset B_\epsilon (x_n)$ for all large $n$
and thus there is some one $c_j$ of $c_1,\dotsc,c_\ell$ such
that $f^{k_n}(U)$ is disjoint from $\supp c_j$ for infinitely
many values of $n$. Consequently $U$ is disjoint from
$\supp f\shsub{C}^{k_n}(c_j)$ for infinitely many $n$,
contrary to $x_\infty$ being a point at which $f$ is 
cohomologically expansive for $v$.
\end{proof}

We included Lemma~\ref{ceImpliesNotEquicontinuous} to
show that our notion of cohomological expansion is
genuinely expansive. However, depending on the nature of
$\sheaf{A}$, being cohomologically expansive can imply that
neighborhoods grow a great deal under iteration indeed.  
In Lemma~\ref{ceExpands} we show that given any 
closed set $K$ such that the pullback of a fixed point free closed eigenbundle
$a\in\shHone{X}{A}$ to $K$ is a trivial bundle then any 
neighborhood $U$ of a point at which
$f$ is cohomologically expanding for $a$ is so expanded under
iteration that $f^k(U)\not\subset \interior K$ for all sufficiently large $k$.
The collection of such sets $K$ typically contains very large
sets about every point so no matter where $f^k(x)$ is the 
conclusion that $f^k(U)$ does not lie in any $\interior K$
implies some points of $f^k(U)$ must lie far away from $f^k(x)$.
The point is roughly that large iterates of any
neighborhood of $x$ can not be homotopically 
contracted to a point in $X$.

\begin{lemma}
\label{ceExpands}
If $\sheaf{B}$ is soft, then for any closed set $K\subset X$ such that the
image of $H^1(X,\sheaf{A})\to H^1(K,\sheaf{A}\rest{K})$ is zero, 
given any divisor $c\in \shGamma{C}$, there is another
divisor $c'\in\shGamma{C}$ associated to the same bundle
and $c'$ is supported outside the interior of $K$. Consequently,
if $f$ is cohomologically expansive
at $x\in X$ for some base point free closed eigenbundle 
$a\in \shHone{X}{A}$ then necessarily 
for any neighborhood $U$ of $x$, 
$f^k(U)\not\subset \interior K$
for all large $k$, where $\interior K$ is the interior of $K$.
\end{lemma}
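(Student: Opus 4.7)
\medskip
\noindent\textbf{Proof plan.} The plan is to prove the existence statement by combining naturality of the connecting homomorphism with softness of $\sheaf{B}$, and then to derive the expansivity consequence by iterating the support-preservation lemma proved earlier.

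For the first assertion, let $c\in\shGamma{C}$ be any divisor and set $a := \delta(c)\in\shHone{X}{A}$. By hypothesis $a\rest{K}=0$ in $H^1(K,\sheaf{A}\rest{K})$. Restricting the short exact sequence $\sheaf{A}\to\sheaf{B}\to\sheaf{C}$ to $K$ preserves exactness (which is stalkwise), and naturality of $\delta$ with respect to the inclusion $K\into X$ identifies the image of $a$ under restriction with $\delta_K(c\rest{K})$, so $\delta_K(c\rest{K})=0$. Exactness on $K$ at $\Gamma(K,\sheaf{C}\rest{K})$ then produces $\tilde b\in\Gamma(K,\sheaf{B}\rest{K})$ with $q(\tilde b)=c\rest{K}$. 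Since $\sheaf{B}$ is soft, $\tilde b$ extends to a global section $b\in\shGamma{B}$. Set $c':=c-q(b)$. Because $\delta\circ q=0$ in the long exact sequence, $\delta(c')=a$; and $c'\rest{K}=c\rest{K}-q(\tilde b)=0$. Restricting to the open subset $\interior K$ gives $c'\rest{\interior K}=0$, so $\interior K\subset X\setminus\supp(c')$, i.e.\ $\supp(c')\subset X\setminus\interior K$ as claimed.

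For the ``consequently'' statement, suppose $f$ is cohomologically expansive at $x$ for the base point free closed eigenbundle $a$, let $U$ be any neighborhood of $x$, and assume for contradiction that $f^k(U)\subset\interior K$ for arbitrarily large $k$. Apply the first part to pick a divisor $c_0$ of $a$ with $\supp(c_0)\cap\interior K=\emptyset$, so $\interior K$ lies outside the support of $c_0$. The earlier lemma asserts that if an open set lies outside $\supp(c)$ then its preimage under $f$ lies outside $\supp(f\shsub{C}(c))$; induction on $k$, using that $f^{-k}(\interior K)$ is open, yields $f^{-k}(\interior K)\cap\supp(f\shsub{C}^k(c_0))=\emptyset$ for every $k\geq 0$. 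But $f^k(U)\subset\interior K$ is equivalent to $U\subset f^{-k}(\interior K)$, so $U\cap\supp(f\shsub{C}^k(c_0))=\emptyset$ for arbitrarily large $k$, contradicting cohomological expansivity of $f$ at $x$ for $a$ applied to the divisor $c_0$.

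The only nonformal step is the naturality of the connecting map with respect to restriction to the closed subspace $K$; this is standard and can be seen directly by applying the snake lemma to the restriction of a flabby (or any injective) resolution of the sequence $\sheaf{A}\to\sheaf{B}\to\sheaf{C}$. Note that the eigenvalue $\lambda$ plays no role: we only need $c_0$ as a divisor of $a$ itself, and neither the proof of the existence of $c'$ nor the final contradiction invokes any local growth hypothesis on $f$.
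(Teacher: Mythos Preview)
Your proof is correct and follows essentially the same route as the paper: use naturality of $\delta$ under restriction to $K$ together with exactness to lift $c\rest{K}$ to $\Gamma(K,\sheaf{B}\rest{K})$, extend globally by softness, subtract to obtain $c'$ vanishing on $K$, and then derive the expansivity consequence from the earlier support-preservation lemma. The only difference is presentational: the paper packages the first step as a commutative square of long exact sequences and is terser in the second step (asserting $f^k(U)\cap\supp c\neq\emptyset$ for all large $k$ as ``easy to see''), whereas you spell out the inductive use of the support lemma via $U\subset f^{-k}(\interior K)$, which is arguably the cleaner justification.
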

\begin{proof}
We use the commutative diagram 
\[
\xymatrix{
 {H^0(X,\sheaf{B})} \ar[r]^{\mGamma{q}} \ar[d] & {H^0(X,\sheaf{C})} \ar[r]^\delta \ar[d] & {\shHone{X}{A}} \ar[d] \\
 {H^0(K,\sheaf{B}\rest{K})} \ar[r]^{\mGamma{q}} & {H^0(K,\sheaf{C}\rest{K})} \ar[r]^\delta & {0} 
}
\]
which we have written using $H^0$ instead
of $\Gamma$ so it is clear what the ambient space is in each case.
From exactness there exists some 
$\beta\in H^0(K,\sheaf{B}\rest{K})$ such that 
$\delta(\beta)=c\rest{K}$. Then since $\sheaf{B}$ is soft the map
$H^0(X,\sheaf{B})\to H^0(K,\sheaf{B}\rest{K})$ is surjective
so there is some $b\in\shGamma{B}=H^0(X,\sheaf{B})$ such that $b\rest{K}=\beta$.
Then $c'=c-(\Gamma q)(b)$ has $\delta(c')=\delta(c)$
and $c'\rest{K}=0$ so $\supp(c')$ is disjoint from the interior of $K$.

It is easy to see that if $f$ is cohomologically expansive
at $x\in X$ for some fixed point free closed eigenbundle 
$a\in \shHone{X}{A}$ then necessarily 
for any neighborhood $U$ of $x$, 
$f^k(U)\cap \supp c\nempty$ for all
large $k$ for any $c\in\shGamma{C}$ such that
$\delta(c)=a$. Hence $f^k(U)$ can not lie in the interior
of $K$ for any large $k$.
\end{proof}

\begin{convention}
We let $\K$ be 
either $\R$ or $\C$, although our central theorems only
require $\K$ to be a complete field with an absolute value. 
\end{convention}

The following Theorem takes advantage of the 
fact that
in an exact sequence the eigenvalues of members of nonadjacent
members of the sequence do not have to agree to give conditions under
which one can
uniquely ``lift'' fixed members of
one term of the exact sequence to a fixed member of the preceding term. 
Interpreted as a statement in the context of sheaf cohomology we
will be able to use this Theorem to make dynamical conclusions.

The theorem shows that each closed eigenbundle of the 
induced map $f\shsub{A}\colon\shHone{X}{A}\to\shHone{X}{A}$ 
with sufficiently large eigenvalue has
a unique associated invariant divisor $c\in\shGamma{C}$. 

\begin{definition}
Given any finite dimensional $\K$ vector space $V$
along with a linear map $g\colon V\to V$
and any positive real number $r$,
we let the $r$ chronically expanding subspace of $V$ be 
the span of the
subspaces associated\footnote{Meaning for each eigenvector
$\lambda$ we include not just the $\lambda$ eigenspace,
but also every $v\in V$ such that $(g-\lambda\cdot\id{V})^n(v)=0$
for some positive integer $n$.} 
to eigenvalues of absolute value greater than $r$. We refer to the $1$ chronically
expanding subspace simply as the chronically expanding subspace. 
\end{definition}

\begin{theorem}[Unique Invariant Subspace Theorem]
\label{main}
We will assume the following:
\begin{itemize}
\item $f\colon X\to X$ is a continuous self map of a topological space $X$.
\item We are given an $f$ self cohomomorphism of 
a short exact sequence of sheaves on $X$, 
\[\sheaf{A}\overset{p}{\to}\sheaf{B}\overset{q}{\to}\sheaf{C}\]
\item $\shGamma{B}$ is a Banach space over $\K$,  
and there exists some $\alpha,d\in \positive$ such that
$\|\mshGamma{f}{B}^k(\divisor{B})\|\leq d \cdot\alpha^k\| \divisor{B}\|$ 
for $k\in\N$, $\divisor{B}\in \shGamma{B}$,
\item $\shGamma{C}$ is a topological vector space over $\K$.
\item If a sequence $\divisor{C}_i\in \shGamma{C}$ of divisors 
converges to another divisor $\divisor{C}_\infty$ then the support of 
$\divisor{C}_\infty$ is contained in the closure of the 
union of the supports of $\divisor{C}_i$.
\item The maps $\mshGamma{f}{C}$ and $\mGamma{q}$ are continuous. 
\item We are given a finite dimensional $H^1(f\shsub{A})$ invariant subspace
$W$ of the $\alpha$ chronically expanding subspace of
$\shHone{X}{A}$. We also require $W$ to be comprised only of closed bundles.
\end{itemize}
Then given any $\K$ linear map $s\colon W\to\shGamma{C}$ such that $\delta s=\id{W}$
there is  a $\K$ linear map $\tau\colon W\to\shGamma{B}$ satisfying
\begin{equation}
\label{iterationFormula}
\kappa :=\lim_{k\to\infty} (\mshGamma{f}{C})^ksg^k=s+(\Gamma q)\tau
\end{equation}
where $g\colon W\to W$ is the inverse of $H^1f\shsub{A}\rest{W}$.
Under iterated pullback the rescaled pullbacks of any divisor
$\divisor{C}\in\shGamma{C}$ of a bundle $w\in W$ converge toward
the invariant plane of divisors $\kappa(W)\subset\shGamma{C}$.
The map $\kappa\colon W\to\shGamma{C}$ is the unique map making the
diagram
\[
\xymatrix@=12pt{
& & &   &   &  {W}  \ar '[d] [ddd] ^{g}   \ar [rd] ^{\iota}  \ar@{-->}[lld]_{\kappa} & \\
{\shGamma{B}} \ar[rrr]_{\mGamma{q}} \ar[ddd]_{\mshGamma{f}{B}} & & &  {\shGamma{C}}  \ar [ddd]_{\mshGamma{f}{C}}   \ar [rrr] _(.4){\delta}  &  &  &  {\shHone{X}{A}}  \ar [ddd] ^{H^1 f\shsub{A}} \\
\\
& & &  &   &   {W}   \ar [rd] ^{\iota} \ar@{-->}[lld]_{\kappa} \\
{\shGamma{B}} \ar[rrr]_{\mGamma{q}} & & &   {\shGamma{C}}   \ar [rrr] _(.4){\delta}  &  &  &  {\shHone{X}{A}} 
}
\]
commute. Finally, for any basepoint free eigenbundle $v\in W$ the support of
the corresponding invariant divisor $\kappa(v)\in\shGamma{C}$
is contained in the set of points on which $f$ is cohomologically
expansive for $v$.
\end{theorem}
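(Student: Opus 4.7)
My plan is to construct $\kappa$ as the limit of the rescaled iterates $(\mshGamma{f}{C})^k s g^k$, identify it with $s + (\mGamma{q})\tau$ for an explicit $\tau\colon W\to\shGamma{B}$ built from a geometric series, and then read off invariance, uniqueness, and the support statement. First I would observe that commutativity of~\eqref{longexact} together with $\delta s = \iota$ gives $\delta(\mshGamma{f}{C})^k s g^k = \iota (\mshHone{f}{A}|_W)^k g^k = \iota$ for every $k$, so every term of the sequence lifts $\iota$. Consequently $\delta(\mshGamma{f}{C} s g - s) = 0$, and by exactness at $\shGamma{C}$ this difference factors through $\mGamma{q}$. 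Since $W$ is finite dimensional, I may choose a $\K$-linear map $\tau_0\colon W\to\shGamma{B}$ with $(\mGamma{q})\tau_0 = \mshGamma{f}{C} s g - s$. A standard telescoping, using $(\mshGamma{f}{C})^k \mGamma{q} = \mGamma{q}(\mshGamma{f}{B})^k$ from~\eqref{shortexact}, yields
\[ (\mshGamma{f}{C})^N s g^N \;=\; s + (\mGamma{q})\sum_{k=0}^{N-1} (\mshGamma{f}{B})^k \tau_0 g^k. \]

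Because $W$ lies in the $\alpha$ chronically expanding subspace of $\shHone{X}{A}$, every eigenvalue of $\mshHone{f}{A}|_W$ has absolute value strictly greater than $\alpha$, so $\rho(g) < 1/\alpha$ (a Jordan-form estimate handling any non-diagonalizable part yields $\|g^k\|\le C(\rho(g)+\epsilon)^k$ for any small $\epsilon>0$). Combined with $\|(\mshGamma{f}{B})^k\|\le d\alpha^k$, the estimate $\|(\mshGamma{f}{B})^k\tau_0 g^k\|\le d\|\tau_0\|\,\alpha^k\|g^k\|$ is geometric, so $\tau := \sum_{k=0}^\infty (\mshGamma{f}{B})^k \tau_0 g^k$ converges absolutely in the Banach space $\shGamma{B}$. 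Continuity of $\mGamma{q}$ then produces $\kappa := s + (\mGamma{q})\tau = \lim_{N\to\infty}(\mshGamma{f}{C})^N s g^N$ in $\shGamma{C}$, and continuity of $\mshGamma{f}{C}$ plus an index shift $\mshGamma{f}{C}(\mshGamma{f}{C})^N s g^N g = (\mshGamma{f}{C})^{N+1} s g^{N+1}$ gives the invariance $\mshGamma{f}{C}\kappa g = \kappa$. Independence of the choice of $s$ is immediate: any other lift differs by $\mGamma{q}\sigma$, and the extra term $\mGamma{q}(\mshGamma{f}{B})^k\sigma g^k$ decays to $0$. This independence yields the convergence claim for arbitrary divisors: given $\divisor{C}\in\shGamma{C}$ with $\delta(\divisor{C})=w\in W$, extend $\divisor{C}$ to a linear lift $s$ with $s(w)=\divisor{C}$, and the iteration formula evaluated at $w$ is exactly the stated convergence to $\kappa(w)\in\kappa(W)$.

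For uniqueness, suppose $\kappa'$ also satisfies $\delta\kappa'=\iota$ and $\mshGamma{f}{C}\kappa' g=\kappa'$. Then $\delta(\kappa-\kappa')=0$, so $\kappa-\kappa'=(\mGamma{q})\eta$ for some $\eta\colon W\to\shGamma{B}$. Iterating the common invariance gives $(\mGamma{q})\eta = (\mGamma{q})(\mshGamma{f}{B})^k\eta g^k$ for all $k$; since $\|(\mshGamma{f}{B})^k\eta g^k\|\to 0$ by the same spectral estimate, continuity of $\mGamma{q}$ forces $\kappa-\kappa'=0$.

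Finally, for the support statement, let $v\in W$ be a base point free eigenbundle with $\mshHone{f}{A}(v)=\lambda v$, so $g(v)=\lambda^{-1}v$. Suppose $x$ is not cohomologically expansive for $v$: by definition there is an open neighborhood $U$ of $x$, a divisor $c\in\shGamma{C}$ of $v$, and a subsequence $k_j\to\infty$ with $U\cap\supp(\mshGamma{f}{C})^{k_j}c=\emptyset$. Using independence from $s$, choose $s$ with $s(v)=c$; then $\kappa(v)=\lim_j \lambda^{-k_j}(\mshGamma{f}{C})^{k_j}c$, and each approximating divisor in this subsequence is supported outside the open set $U$. The hypothesis that the support of a limit lies in the closure of the union of the supports then places $\supp\kappa(v)$ inside $\overline{\bigcup_j \supp(\mshGamma{f}{C})^{k_j}c}$, which is disjoint from $U$. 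Hence $x\notin\supp\kappa(v)$. I expect the main technical hurdle to be the uniform spectral estimate $\alpha\rho(g)<1$ on $W$ (handled via Jordan blocks when $\mshHone{f}{A}|_W$ is not diagonalizable); everything else is continuity bookkeeping on the commutative diagrams~\eqref{shortexact} and~\eqref{longexact}.
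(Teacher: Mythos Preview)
Your proof is correct and follows the same strategy as the paper: lift the defect $(\mshGamma{f}{C})sg - s$ through $\mGamma{q}$ to a map $W\to\shGamma{B}$, sum the resulting geometric series using the spectral gap $\alpha\rho(g)<1$ to obtain $\tau$, and read off $\kappa$, its invariance, and the support conclusion via the hypothesis on supports of limits. Your treatment is in fact slightly more complete than the paper's---you spell out the telescoping identity, the independence of $\kappa$ from the choice of $s$, and the uniqueness argument, whereas the paper leaves uniqueness as an ``easy consequence''; you also use $g^k$ consistently where the paper's printed proof appears to have sign typos (writing $g^{-1}$, $g^{-k}$ in places where $g$, $g^k$ are required for the estimates and the telescoping to work).
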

\begin{proof}
We note that $\delta\bigl((\mshGamma{f}{C})sg-s\bigr)=0$ and so 
there is a map $\sigma\colon W\to \shGamma{B}$ such that
$(\Gamma q)\sigma=(\mshGamma{f}{C})sg-s$. 

Define 
$\Phi\colon \Hom(W,\shGamma{B})\to \Hom(W,\shGamma{B})$ by
$\Phi(\sigma)=(\mshGamma{f}{B})\sigma g^{-1}$. We will show that
the sequence of maps $\Phi^k$ is exponentially
contracting on $\Hom(W,\shGamma{B})$. Fix a norm $\|\cdot\|$ on $W$. 
The assumption that $W$ lies in the
$\alpha$ chronically expanding subspace of $\shHone{X}{A}$
implies that there exists some $\beta > \alpha$ and some $c > 0$ such that
$\|g^{-k}(w)\|\leq c\beta^{-k}\|w\|$ for $k\in \N$, $w\in W$.
This with the assumption on the rate
of expansion of $\mshGamma{f}{B}$ easily implies that
\[\|\Phi^k(\phi)(w)\|=\|(\mshGamma{f}{B})^k(\phi(g^{-k}(w)))\| \leq cd\Bigl(\frac{\alpha}{\beta}\Bigr)^k\|\phi\|\cdot \|w\|\]
Thus $\Phi^k$ is an operator of norm 
no more than $cd\Bigl(\frac{\alpha}{\beta}\Bigr)^k$, where $\alpha < \beta$.

Letting $\tau_k=\sigma+\Phi(\sigma)+\Phi^2(\sigma)+\dotsb+\Phi^k(\sigma)$
then $\lim_{k\to\infty} \tau_k$ converges to some map $\tau$.
It is easily confirmed that $(\Gamma q)\tau_k=(\mshGamma{f}{C})^ksg^{-k}-s$.
Equation~\eqref{iterationFormula} then follows by continuity of $\Gamma q$.

The conclusions about the map $\kappa$ are easy consequences of its definition.

For the final conclusion note that if we just let $W$ be the
span of $v$ then we have already shown that if $\divisor{C}$
is the unique invariant member of $\shGamma{C}$ associated
to $v$ then for any divisor $c'\in\shGamma{C}$
satisfying $\delta(c')=v$ letting
$\lambda$ be the eigenvalue of $v$ we can write
$c'=\kappa(v)+(\Gamma q)(b)$ and equation~\ref{iterationFormula}
becomes
$(\mshGamma{f}{C})^k{c'}/\lambda^k=\kappa(v)+(\Gamma q)(\mshGamma{f}{B})^k b\lambda^k$
where the final term goes to zero as $k\to\infty$ (by our
assumptions on growth rates of $g^{-1}$ and $\mshGamma{f}{B}$).
Hence $(\mshGamma{f}{C})^k(c')/\lambda^k$ converges to $c=\kappa(v)$. 
If $U$ is any open
subset of $X$ and if the support of $c'$ is disjoint from 
$f^n(U)$ for arbitrarily large values of $n$,
then the support of $(\mshGamma{f}{C})^n(c')$ must be
disjoint from $U$ for arbitrarily large values of $n$.
Since, rescaled, these converge to $c$
then $U$ must lie outside the support of $c$.
\end{proof}

\begin{remark}
While we have not formally required $X$ to be compact,
the requirement that $\shGamma{B}$
be a Banach space makes this the main case in which 
Theorem~\ref{main} is apt to have interesting applications.
\end{remark}

Theorem~\ref{main} shows that among all members of $\shGamma{C}$
representing a cohomology class in $W$ there is a unique invariant linear
subspace which can be identified with $W$ and all other such
members of $\shGamma{C}$
are contracted to this invariant copy of $W$ in $\shGamma{C}$ 
under (rescaled) pullback.

\begin{corollary}
Assume that the hypothesis of Theorem~\ref{main}
are satisfied, and that
$g\colon W\to W$ is dominated by a single simple real
eigenvalue $r > 0$ with eigenvector $v$. Let $\divisor{C}\equiv\kappa(v)$
be the unique invariant divisor of $v$.
Then given a divisor $\divisor{C}'\in \shGamma{C}$ of 
any $w\in W$ the successive rescaled pullbacks
$f\shsub{C}^k(\divisor{C}')/r^k$ converge to 
a multiple (possibly zero) of $\current{C}$.
\end{corollary}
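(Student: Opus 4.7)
The plan is to couple the spectral decomposition of $g$ on the finite dimensional space $W$ to the equivariance already established in Theorem~\ref{main}. First I would pick a $g$-invariant splitting $W = \K v \oplus W'$, where $W'$ is the sum of the generalized eigenspaces of $g$ for the remaining eigenvalues; the assumption that $r$ strictly dominates each of these in absolute value guarantees $g^k w'/r^k \to 0$ for every $w'\in W'$.

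Next I would decompose $\divisor{C}'$. Writing $w=\mu v + w'$ with $\mu\in\K$ and $w'\in W'$, and using that both $\divisor{C}'$ and $\kappa(w)$ are divisors of $w$, exactness of $\shGamma{B}\overset{\Gamma q}{\to}\shGamma{C}\overset{\delta}{\to}\shHone{X}{A}$ yields some $b\in\shGamma{B}$ with $\divisor{C}'=\kappa(w)+(\Gamma q)(b)=\mu\,\kappa(v)+\kappa(w')+(\Gamma q)(b)$.

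Now I iterate. The commuting diagram of Theorem~\ref{main} gives $f\shsub{C}\circ\kappa=\kappa\circ g$, so $f\shsub{C}^k\kappa(v)=r^k\kappa(v)$ and $f\shsub{C}^k\kappa(w')=\kappa(g^k w')$; the $f$-cohomomorphism property of the short exact sequence gives $f\shsub{C}\circ(\Gamma q)=(\Gamma q)\circ f\shsub{B}$. Combining these three equivariances,
\[ r^{-k} f\shsub{C}^k \divisor{C}' \;=\; \mu\,\current{C} \;+\; \kappa(r^{-k}g^k w') \;+\; (\Gamma q)(r^{-k}f\shsub{B}^k b). \]
Continuity of the linear map $\kappa$ on the finite dimensional space $W$ combined with $r^{-k}g^k w'\to 0$ takes care of the middle term; for the last term, the bound $\|f\shsub{B}^k b\|\leq d\alpha^k\|b\|$ supplied by Theorem~\ref{main} together with $r>\alpha$ gives $r^{-k}f\shsub{B}^k b\to 0$ in $\shGamma{B}$, and continuity of $\Gamma q$ then finishes the job. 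The limit is $\mu\,\current{C}$, as required.

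The proof is largely routine given Theorem~\ref{main}; the only subtlety I anticipate is establishing $r>\alpha$, which rests on tracking the precise relationship between the eigenvalue $r$ of $g$ and the $\alpha$ chronically expanding subspace of $\shHone{X}{A}$ in the hypotheses of Theorem~\ref{main}. Conceptually the corollary refines Theorem~\ref{main}'s ``convergence to $\kappa(W)$'' statement by identifying the attracting ray explicitly: when $g$ has a strictly dominant simple real eigendirection, the one dimensional subspace $\K\cdot\current{C}\subset\kappa(W)$ absorbs every rescaled sequence of pullbacks of divisors of members of $W$, by the standard power iteration mechanism.
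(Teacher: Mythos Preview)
Your proof is correct and is essentially the same argument the paper has in mind: the paper's entire proof is the single sentence ``This is a direct consequence of equation~\eqref{iterationFormula},'' and your decomposition $\divisor{C}'=\kappa(w)+(\Gamma q)(b)$ together with the equivariance $f\shsub{C}\circ\kappa=\kappa\circ g$ and the growth bound on $\mshGamma{f}{B}$ are precisely what one needs to unpack that sentence. Your observation that $r>\alpha$ follows from $W$ lying in the $\alpha$ chronically expanding subspace is the right justification for the last step.
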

\begin{proof}
This is a direct consequence of 
equation~\eqref{iterationFormula}.
\end{proof}

The assumption that $g\colon W\to W$ is dominated by a single
simple real eigenvalue is meant to handle the most typical situation,
and is not an essential restriction.

\begin{remark}
Given that for a fixed $f\colon X\to X$ the 
category of $\cat{C}$ sheaves $\sheaf{A}$ on $X$
endowed with an $f$ self cohomomorphism $F$ is an abelian category with enough 
injectives, then the functor $\fixed\Gamma$ which
gives the fixed global sections of $\sheaf{A}$ under $F$ will be 
left exact and its right derived functors should be of dynamical
interest. In the case where $\sheaf{A}$ is a sheaf of functions
and $f$ is invertible this is just group cohomology with
the group $\Z$ acting on $\shGamma{A}$ and has been an object
of study for some time (see, e.g. \cite{katok-combinatorial}). We anticipate
studying the case of more general sheaves $\sheaf{A}$ and
the right derived functors of the composition $\fixed\Gamma$
in a future paper, including the case of endomorphisms.
\end{remark}

\subsection{Regularity and Positivity}

Typically our regularity results for the members invariant plane $\kappa(W)$
will be most easily described in terms of $\sheaf{B}$ rather than $\sheaf{C}$.
We therefore make the following definition.

\begin{definition}
Given a subsheaf $\sheaf{B}'\subset\sheaf{B}$ 
we will say a divisor $\divisor{C}\in\shGamma{C}$ has local $\sheaf{B}'$ potentials
if $\divisor{C}\in\Gamma(q(\sheaf{B}'))$. 
This is equivalent to requiring that about each point $x\in X$ there is
an open neighborhood $U$ and some $\divisor{B}'\in\sheaf{B}'(U)$ such that
$q(\divisor{B}')=\divisor{C}\rest{U}$.
\end{definition}

The proof of Theorem~\ref{main}
implicitly provides a method to prove regularity results for
members of the invariant plane $\kappa(W)$. 
We make this explicit as a corollary (of the proof).

\begin{corollary}
\label{regularity}
Assume we are given $f\colon X\to X$ and a short exact sequence
of sheaves $\sheaf{A}\lto{p}\sheaf{B}\lto{q}\sheaf{C}$
satisfying the hypothesis of Theorem~\ref{main}. 
Assume that $\sheaf{B}'$ is a subsheaf of $\sheaf{B}$
and that $\mshGamma{f}{B}(\sheaf{B}')\subset \sheaf{B}'$.
Let $\sheaf{C}'$ be the image of $\sheaf{B}'$ under 
$q\colon\sheaf{B}\to\sheaf{C}$.
Let $\sheaf{A}'\subset \sheaf{A}$ be the kernel of 
$q\colon \sheaf{B}\to\sheaf{C}'$.
Assume that the canonical map $H^1(X,\sheaf{A}')\to H^1(X,\sheaf{A})$
is injective.
Assume that there are basis members $w_1,\dotsc,w_k$
of $W$ with divisors each of which has local potentials in $\sheaf{B}'$.
Let $r$ be the the inverse of the absolute value of the largest
eigenvalue of $g^{-1}$ (so for all $j\geq 0$, $g^{-j}$ is an operator of norm no more
than $cr^{-j}$ for some $c > 0$) 
Finally assume that for any sequence 
of numbers $a_j, j=0,1,2,\dotsc$ 
such that $\abs{a_j}$ is no more than a constant times $r^{-j}$ as $j\to \infty$
then for $\current{B}\in\shGamma{B'}$ the exponentially decaying sequence
\begin{equation}
\label{geometricSeries}
a_0\, \current{B} + a_1 \,(\mshGamma{f}{B})(\current{B})+ a_2\,(\mshGamma{f}{B})^2(\current{B})+\dotsb
\end{equation}
converges in the Banach space structure on $\shGamma{B}$ to a member
of $\shGamma{B'}$. Then the map $\kappa\colon W\to\shGamma{C}$ 
lands in $\shGamma{C'}$.
\end{corollary}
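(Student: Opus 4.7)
The plan is to rerun the construction in the proof of Theorem~\ref{main} with all auxiliary choices constrained to the primed subsheaves, and then to argue that the geometric series defining $\tau$ still converges, now inside $\shGamma{B'}$ rather than only in $\shGamma{B}$.

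First I would check that $\mshGamma{f}{C}$ preserves $\shGamma{C'}$: since $\mshGamma{f}{B}(\sheaf{B}')\subset\sheaf{B}'$ by hypothesis and the $f$-cohomomorphisms commute with $q$, one has $\mshGamma{f}{C}(q(\sheaf{B}'))\subset q(\mshGamma{f}{B}(\sheaf{B}'))\subset q(\sheaf{B}')=\sheaf{C}'$. Using the hypothesised basis $w_1,\dotsc,w_k$, define $s\colon W\to\shGamma{C}$ on this basis by choosing divisors with $\sheaf{B}'$ potentials, so that $s(W)\subset\shGamma{C'}$ automatically. Consequently the map $c_W:=(\mshGamma{f}{C})sg-s$ takes values in $\shGamma{C'}$.

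Next I would lift $c_W$ to a map $\sigma\colon W\to\shGamma{B'}$, and this is where the injectivity hypothesis enters. The short exact sequence $0\to\sheaf{A}'\to\sheaf{B}'\to\sheaf{C}'\to 0$ produces a connecting map $\delta'\colon\shGamma{C'}\to H^1(X,\sheaf{A}')$. For each $w\in W$ the proof of Theorem~\ref{main} already supplies some $\sigma_0(w)\in\shGamma{B}$ with $(\Gamma q)\sigma_0(w)=c_W(w)$, which shows that the image of $\delta'(c_W(w))$ in $H^1(X,\sheaf{A})$ vanishes. Injectivity of $H^1(X,\sheaf{A}')\into H^1(X,\sheaf{A})$ then forces $\delta'(c_W(w))=0$, so $c_W(w)$ lifts into $\shGamma{B'}$. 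Taking such lifts at $w_1,\dotsc,w_k$ and extending $\K$-linearly yields the desired $\sigma$.

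Finally I expand $\tau=\sum_{n\ge 0}\Phi^n(\sigma)$ at a basis vector $w_j$. Writing $g^{-n}(w_j)=\sum_{i=1}^{k}a^{(n)}_{j,i}w_i$, the hypothesis on $g^{-j}$ gives $\abs{a^{(n)}_{j,i}}\leq Cr^{-n}$, and finite dimensionality of $W$ lets one swap the sums to obtain
\[\tau(w_j)=\sum_{i=1}^{k}\sum_{n=0}^{\infty}a^{(n)}_{j,i}\,(\mshGamma{f}{B})^n\bigl(\sigma(w_i)\bigr).\]
Each inner series matches the form of~\eqref{geometricSeries} with $\current{B}=\sigma(w_i)\in\shGamma{B'}$, so by the convergence hypothesis it lies in $\shGamma{B'}$. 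Hence $\tau(w_j)\in\shGamma{B'}$ and $\kappa(w_j)=s(w_j)+(\Gamma q)\tau(w_j)\in\shGamma{C'}$; by $\K$-linearity $\kappa(W)\subset\shGamma{C'}$. I expect the main obstacle to be the refinement of $\sigma$ into $\shGamma{B'}$: recognising that the $H^1(X,\sheaf{A}')$-valued obstruction is killed precisely because its image in $H^1(X,\sheaf{A})$ already vanishes thanks to the unrestricted lift from Theorem~\ref{main}, with the injectivity hypothesis then upgrading this to the vanishing of the primed obstruction. The rest---preservation of $\sheaf{C}'$ under $\mshGamma{f}{C}$, the finite swap of summations, and the limit in the Banach structure---is routine bookkeeping.
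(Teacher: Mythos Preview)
Your proposal is correct and follows essentially the same route as the paper: choose $s$ to land in $\shGamma{C'}$, use injectivity of $H^1(X,\sheaf{A}')\to H^1(X,\sheaf{A})$ to upgrade the lift $\sigma$ into $\shGamma{B'}$, then expand $\tau$ via the matrix entries of powers of $g^{-1}$ and regroup so that each resulting series is of the form~\eqref{geometricSeries}. Your treatment is in fact slightly more explicit than the paper's at two points---you spell out why $\mshGamma{f}{C}$ preserves $\shGamma{C'}$, and you make the obstruction-theoretic role of $\delta'$ and the naturality of the connecting maps fully visible---but the architecture is identical.
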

\begin{proof}
Since $W$ lies in the $\alpha$ chronically expanding subspace of $W$
then necessarily $\alpha / r < 1$. Thus the terms of equation~\eqref{geometricSeries}
have exponentially decreasing norms and the series is exponentially decaying.

By the assumption of a divisor in $\shGamma{C'}$
for each member $w_j$ of a basis then the map $s\colon W\to \shGamma{C}$
in Theorem~\ref{main} can be assumed to land in $\shGamma{C'}$.
Then $(\mshGamma{f}{C})sg^{-1}-s$ lands in $\shGamma{C'}$ 
and satisfies $\delta((\mshGamma{f}{C})sg^{-1}-s)=0$.
Since $H^1(X,\sheaf{A'})\to H^1(X,\sheaf{A})$ injects it 
easily follows that  
for each $w_j$ one can choose $\sigma(w_j)$ to be a
member $\current{B}_j$ of $\shGamma{B}'$.
Using the basis $w_1,\dotsc,w_k$ to write $g^{-1}$ as a matrix
$A$, and letting $a_{ij,\ell}$ be the $ij$ entry of $A^\ell$
(so for each $ij$, $a_{ij,\ell}$ is bounded by a constant times $r^{-\ell}$)
we see that 
$\tau_\ell(w_j)=\current{B}_j 
+ (\mshGamma{f}{B})(a_{1j,1}\current{B}_1 + \dotsb + a_{kj,1}\current{B}_k) 
+ (\mshGamma{f}{B})^2(a_{1j,2}\current{B}_2+\dotsb + a_{kj,2}\current{B}_k) 
+ \dotsb 
+ (\mshGamma{f}{B})^\ell(a_{1j,\ell}\current{B}_1+\dotsb+a_{kj,\ell}\current{B}_k)$.
Gathering all the $\current{B}_1$ terms, $\current{B}_2$ terms, etc... from the right hand side
we see that $\tau=\lim_{k\to\infty} \tau_k$ is a member of $\shGamma{B'}$
and thus that $\kappa$ lands in $\shGamma{C'}$ by equation~\eqref{iterationFormula}.
\end{proof}

The following trivial observation will suffice for our needed positivity conclusions.

\begin{observation}
\label{positivity}
Assume we have an $f$ self cohomomorphism of
a short exact sequence of sheaves 
$\sheaf{A}\overset{p}{\to}\sheaf{B}\overset{q}{\to}\sheaf{C}$
satisfying the hypothesis of Theorem~\ref{main}, 
and also 
a subsheaf $\sheaf{C}'\subset\sheaf{C}$ such that
\begin{enumerate}
\item $\sheaf{C}'$ is closed under multiplication by $\positive$. 
Note that $\sheaf{C}'$ is not necessarily a sheaf of $\K$ modules, or even of groups.
\item $f\shsub{C}(\sheaf{C}')\subset \sheaf{C}'$
\item $\Gamma(\sheaf{C}')$ is closed in $\shGamma{C}$.
\end{enumerate}
Then for any closed eigenbundle
$v\in \shHone{X}{A}$ with eigenvalue in $\positiveK$ and at
least one divisor $\divisor{C}'\in\shGamma{C'}$
the unique invariant divisor $\divisor{C}\in\shGamma{C}$ of $v$
also lies in $\shGamma{C'}$.
\end{observation}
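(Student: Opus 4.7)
The plan is to apply Theorem~\ref{main} with $W$ taken to be the one-dimensional subspace spanned by the eigenbundle $v$, and to choose the lift $s\colon W\to\shGamma{C}$ so that $s(v)=\divisor{C}'$. Because $\sheaf{A}\to\sheaf{B}\to\sheaf{C}$ is assumed to satisfy the hypotheses of Theorem~\ref{main}, this furnishes the unique invariant divisor $\kappa(v)\in\shGamma{C}$ representing $v$. The task is then to show that $\kappa(v)$ actually lies in the smaller subspace $\Gamma(\sheaf{C}')$.

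The key is the iteration formula~\eqref{iterationFormula}. Writing $\lambda$ for the (positive) eigenvalue of $v$, the restriction $g\colon W\to W$ is simply multiplication by $\lambda^{-1}$, so the formula collapses to
\[
\kappa(v)=\lim_{k\to\infty} \lambda^{-k}\,(\mshGamma{f}{C})^k(\divisor{C}').
\]
Next I would verify by induction on $k$ that every term of this sequence lies in $\Gamma(\sheaf{C}')$. Condition (2) says that $f\shsub{C}$ preserves $\sheaf{C}'$, so $(\mshGamma{f}{C})^k(\divisor{C}')\in\Gamma(\sheaf{C}')$ for every $k\geq 0$. Condition (1), closure of $\sheaf{C}'$ under multiplication by positive scalars, combined with $\lambda\in\positiveK$ (so $\lambda^{-k}>0$), then shows that $\lambda^{-k}(\mshGamma{f}{C})^k(\divisor{C}')\in\Gamma(\sheaf{C}')$ as well.

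To finish, I would invoke condition (3): since $\Gamma(\sheaf{C}')$ is closed in $\shGamma{C}$, the limit $\kappa(v)$ of this sequence from $\Gamma(\sheaf{C}')$ remains in $\Gamma(\sheaf{C}')$. Uniqueness from Theorem~\ref{main} identifies this limit with the invariant divisor of $v$, which completes the argument. There is no substantive obstacle — the only subtlety worth flagging is that positivity of the eigenvalue (rather than merely $\lvert\lambda\rvert>\alpha$) is what guarantees each rescaling $\lambda^{-k}$ stays inside the positivity cone carved out by $\sheaf{C}'$; a negative or genuinely complex eigenvalue could throw iterates outside of $\sheaf{C}'$ and the argument would break down.
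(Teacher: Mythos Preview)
Your proposal is correct and follows exactly the paper's approach: the paper's proof is the one-line observation that $\divisor{C}=\lim_{k\to\infty}(\mshGamma{f}{C})^k(\divisor{C}')/\lambda^k$, with conditions (1)--(3) doing precisely the work you describe. You have simply unpacked the steps that the paper leaves implicit.
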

\begin{proof}
The proof is trivial since
$\divisor{C}=\lim_{k\to\infty} (\mshGamma{f}{C})^k(\divisor{C}')/\lambda^k$ 
where $\lambda\in\positive$ is the
eigenvalue of $v$.
\end{proof}

\section{Subsheaf Cohomology}

In applications of Theorem~\ref{main} it is common that there is
a well understood exact sequence of sheaves
\begin{equation}
\label{niceSheafSequence}
\sheaf{S}_0\lto{d_0}\sheaf{S}_1\lto{d_1}\sheaf{S}_2\lto{d_2}\dotsb
\end{equation}
and that $\sheaf{B}$ is a subsheaf of $\sheaf{S}_k$ for some $k$,
$\sheaf{A}$ is the kernel of ${d_k}\rest{\sheaf{B}}\colon\sheaf{B}\to\sheaf{S}_{k+1}$
and $\sheaf{C}$ is the image of $\sheaf{B}$ in $\sheaf{S}_{k+1}$.
Moreover, in these cases the self cohomomorphism $f$ on $\sheaf{A}\to\sheaf{B}\to\sheaf{C}$
is induced by an $f$ self cohomomorphism of the sequence~\eqref{niceSheafSequence}.
In order to apply Theorem~\ref{main} to these cases we need to
understand the $R$ module $H^1(X,\sheaf{A})$ and its induced self map.

There does not seem to be a computationally useful way to extract
an injective resolution of $\sheaf{A}$ using subsheaves of
$\sheaf{S}_0\lto{d_0}\sheaf{S}_1\lto{d_1}\dotsb$
even if this last sequence is acyclic. Consider for example the case
where for each $n$, $\sheaf{S}_n$ is the sheaf of currents of degree $n$
and $\sheaf{B}\subset\sheaf{S}_k$ is a subsheaf of mildly regular currents.
It is not clear one could make the regularization method of \cite{deRham} work to compare 
$H^1(X,\sheaf{A})$ to deRham cohomology groups because his chain homotopy operator
$A$ does not restrict well to $\sheaf{B}$ since $\ed A$ does
not preserve regularity. We use a standard sheaf cohomological
trick, which we include here as a proposition which we will need and 
which we expect to be commonly used in conjuction with Theorem~\ref{main}
because of the requirement that $\shGamma{B}$ be a Banach space.

\begin{theorem}[Subsheaf Cohomology]
\label{chainExchange}
Assume we are given an exact sequence of sheaves 
$\sheaf{S}_0\lto{d_0}\sheaf{S}_1\lto{d_1}\sheaf{S}_2\lto{d_2}\dotsb$
and that $\sheaf{B}$ is a subsheaf of $\sheaf{S}_k$ for some $k\geq 1$.
Let $\sheaf{A}=\ker{d_k}\rest{\sheaf{B}}$, and $\sheaf{B'}$ be
the preimage of $\sheaf{B}$ under $d_{k-1}$.
Further assume that 
for each $j \geq 1$ we have
$H^{j}(X,\sheaf{B'})=0$, $H^j(X,\sheaf{B})=0$ and for any $m$ satisfying $0\leq m\leq k-1$ we have $H^j(X,\sheaf{S}_m)=0$ for $j\geq 1$.
Then for each $n\geq 1$ there is a canonical isomorphism
\[H^n(X,\sheaf{A})\cong H^{n+k}(X,\ker d_0).\] 
\end{theorem}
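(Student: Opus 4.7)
The strategy is a dimension shift argument. Set $\sheaf{Z}_j := \ker d_j\subset \sheaf{S}_j$ for $j\geq 0$, so that $\sheaf{Z}_0=\ker d_0$. By exactness of the resolution $\sheaf{S}_0\lto{d_0}\sheaf{S}_1\lto{d_1}\dotsb$, each $d_{j-1}$ factors as $\sheaf{S}_{j-1}\onto\sheaf{Z}_j\into\sheaf{S}_j$, yielding, for every $j\geq 1$, a short exact sequence of sheaves
\[0\to \sheaf{Z}_{j-1}\to \sheaf{S}_{j-1}\overset{d_{j-1}}{\longrightarrow}\sheaf{Z}_j\to 0.\]
The first thing I would prove is the extra short exact sequence linking $\sheaf{A}$ to the ambient kernel $\sheaf{Z}_{k-1}$:
\[0\to \sheaf{Z}_{k-1}\into \sheaf{B}'\overset{d_{k-1}}{\longrightarrow}\sheaf{A}\to 0.\]
That $d_{k-1}\rest{\sheaf{B}'}$ lands in $\sheaf{A}$ follows from $d_k\circ d_{k-1}=0$ together with the definition of $\sheaf{B}'$, so that the image sits inside $\ker d_k\cap \sheaf{B}=\sheaf{A}$. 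Surjectivity onto $\sheaf{A}$ is the nontrivial point and must be checked stalkwise: a germ $a$ of $\sheaf{A}\subset\sheaf{Z}_k$ lifts locally, by exactness of the original resolution at $\sheaf{S}_k$, to some $b$ with $d_{k-1}(b)=a$; since $a\in\sheaf{B}$, this $b$ lies in $d_{k-1}^{-1}(\sheaf{B})=\sheaf{B}'$ by definition of $\sheaf{B}'$. The kernel is $\ker d_{k-1}\cap \sheaf{B}'=\sheaf{Z}_{k-1}$, since the zero section already puts $\ker d_{k-1}$ inside $\sheaf{B}'$.

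The hypothesis $H^j(X,\sheaf{B}')=0$ for $j\geq 1$ then collapses the long exact sequence of this second short exact sequence to an isomorphism
\[H^n(X,\sheaf{A})\cong H^{n+1}(X,\sheaf{Z}_{k-1})\qquad (n\geq 1).\]
The acyclicity hypothesis $H^j(X,\sheaf{S}_m)=0$ for $j\geq 1$ and $0\leq m\leq k-1$ similarly collapses each of the long exact sequences coming from $0\to\sheaf{Z}_{j-1}\to\sheaf{S}_{j-1}\to\sheaf{Z}_j\to 0$ (for $j=k-1,k-2,\dotsc,1$) to isomorphisms $H^m(X,\sheaf{Z}_j)\cong H^{m+1}(X,\sheaf{Z}_{j-1})$ for $m\geq 1$. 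Chaining these gives
\[H^{n+1}(X,\sheaf{Z}_{k-1})\cong H^{n+2}(X,\sheaf{Z}_{k-2})\cong\dotsb\cong H^{n+k}(X,\sheaf{Z}_0)=H^{n+k}(X,\ker d_0),\]
and composing with the first isomorphism completes the proof. Canonicity is automatic from the naturality of the connecting homomorphisms.

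The only delicate point is surjectivity of $d_{k-1}\colon\sheaf{B}'\onto \sheaf{A}$ at the sheaf level; the definition $\sheaf{B}'=d_{k-1}^{-1}(\sheaf{B})$ is engineered precisely to make this hold. Everything else is routine dimension shifting in an acyclic resolution. (Note that the hypothesis $H^j(X,\sheaf{B})=0$ for $j\geq 1$ is not needed for the argument above, though it is natural to list alongside $H^j(X,\sheaf{B}')=0$ and is presumably used in the applications that follow.)
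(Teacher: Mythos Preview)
Your proof is correct and essentially identical to the paper's: both build the short exact sequence $0\to\ker d_{k-1}\to\sheaf{B}'\to\sheaf{A}\to 0$ and then dimension-shift through the sequences $0\to\ker d_j\to\sheaf{S}_j\to\ker d_{j+1}\to 0$ for $j=0,\dotsc,k-2$, using acyclicity of the middle terms to turn the connecting maps into isomorphisms. Your extra care in verifying stalkwise that $d_{k-1}\colon\sheaf{B}'\onto\sheaf{A}$ is surjective, and your observation that the hypothesis $H^j(X,\sheaf{B})=0$ is not actually used in this argument, are both correct.
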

\begin{proof}
While this result is essential for us, its proof is a standard
cohomological trick. First one notes that $\ker d_{k-1}\rest{\sheaf{B'}}=\ker d_{k-1}$
by the definition of $\sheaf{B'}$. One has the short exact sequences
of sheaves:
\[\ker d_{k-1}\to \sheaf{B'}\to (d_k(\sheaf{B'})=\sheaf{A})\]
and
\[\ker d_j\to \sheaf{S}_j\to \ker d_{j+1}, \quad j=0,\dotsc,k-2.\]
Considering the long exact sequences for these shows that the
induced maps $H^n(X,\sheaf{A})\to H^{n+1}(X,\ker d_{k-1})$
and $H^{n+j}(X,\ker d_{k-j})\to H^{n+j-1}(X,\ker d_{k-j-1})$
are isomorphisms for $j=1,\dotsc,k-1$.
Composing each of these canonical isomorphisms
gives a canonical isomorphism from $H^n(X,\sheaf{A})\to H^{n+k}(X,\ker d_0)$.
\end{proof}

\begin{remark}
We take it as clear from the functorality of the
$\delta$ map in the long exact sequence
that given an $f$-self cohomomorphism of 
$\sheaf{S}_0\lto{d_0}\sheaf{S}_1\lto{d_1}\sheaf{S}_2\lto{d_2}\dotsb$
which maps $\sheaf{B}$ to itself that
the induced map of $H^1(X,\sheaf{A})$ is identified
with the induced map of $H^{k+1}(X,\ker d_0)$
via the above isomorphism. 
\end{remark}

We will need one more tool be able to make effective
use of Theorem~\ref{chainExchange} for calculating
sheaf cohomology of subsheaves of sheaves of currents.

\begin{definition}
By an {\it interval flow} $h$ on a bounded open interval
$I\subset \R$ we will mean the flow
obtained by integrating a vector field
of the form $\sigma(t)\dvect{t}$ where $\sigma$ is
positive exactly on $I$ and zero elsewhere. We use
$h(x,t)$ to denote the location of $x\in R$ after 
following the flow for time $t$. 
\end{definition}

\begin{definition}
By an $n$-box in $\R^n$ we will mean an open subset
which is a product of $n$ bounded open intervals
$I_1,\dotsc,I_n$.
By an $n$-box in an $n$ dimensional manifold we will
mean an $n$-box which is compactly supported in some coordinate patch.
By an $n$-subbox of an $n$ box $U=I_1\times \dotsb\times I_n$
we will mean an $n$ box of the form $I'_1\times \dotsb\times I'_n$
where $I'_k$ is a subinterval of $I_k$ for each $k\in 1,\dotsc,n$.
\end{definition}

\begin{definition}
By an $n$-box flow we will mean the $\R^n$ action $h$ on 
$\R^n$ which is the product of $n$ interval flows
$h_1(t_1),\dotsc,h_n(t_n)$ on $\R^n$.
That is $h(x,t)=(h_1(x_1,t_1),\dotsc,h_n(x_n,t_n))$
where $x=(x_1,\dotsc,x_n)$, $t=(t_1,\dotsc,t_n)$
and $h_1,\dotsc,h_n$ are interval flows
on $I_1,\dotsc,I_n$ respectively. We refer
to the $n$-box $I_1\times\dotsb\times I_n$
as the {open support} of the $n$-box flow.
We will often $h_t$ to denote the 
diffeomorphism $h(\cdot,t)\colon\R^n\to\R^n$.
\end{definition}

\begin{definition}
Let $h$ be an $n$-box flow on an $n$-box $B$.
Let $\rho$ be a compactly
supported smooth volume form on $\R^n$. With this data
we define an operator $\smear_{h,\rho}$ on 
smooth $k$ forms on any $n$ box $U$ containing $B$ by
\begin{equation}
\label{smearForm}
\smear_{h,\rho}(\phi)=\int_{R^n} h_t^*(\phi) \rho(t)
\end{equation}
We say $\smear_{h,\rho}$ defines a {\it box smear} on $U$, or {\it smears} $U$.
We will omit the subscript
from $\smear_{h,\rho}$ when the meaning is clear from context.
It is clear $\smear(\phi)$ is compactly supported
in $U$ if $\phi$ is.
\end{definition}

It is clear from the definition of $\smear$ that if
$\psi$ is an $n-k$ form on $U$ then
\[\int_U \smear_{H,\rho}(\phi)\wedge \psi =\int_U \phi\wedge \smear_{-H,\rho}(\psi)\]
where $-H$
is the family $H_t$ with the parameter negated.
From this motivation we define a smear of a current.

\begin{definition}
Given $h,\rho$ defining a smear on an $n$ box $U$ we define the smear
$\smear_{h,\rho}$ on currents on $U$ via
\[<\smear_{h,\rho}(\current{C}),\phi>\equiv<\current{C},\smear_{-h,\rho}(\phi)>.\]
\end{definition}

\begin{lemma}
\label{smearFacts}
Given $h$, $\rho$ defining a smear $\smear$ on an $n$ box
$U$ then 
$\ed \bigl(\smear(\current{C})\bigr)=\smear(\ed \current{C})$
for currents $\current{C}$ on any open subset of $U$ containing the 
open support of the smear.
Also, restricted to the open support of the smear, $\smear(\current{C})$ 
is a smooth form on $V$.
\end{lemma}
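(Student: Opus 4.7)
The plan is to handle the two claims in sequence, with the first a direct duality computation and the second carrying the substantive regularization content.

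For the commutation $\ed \smear(\current{C}) = \smear(\ed \current{C})$, first I would verify that the operator $\smear_{-h,\rho}$ on smooth forms commutes with $\ed$. This is immediate from two classical facts: pullback by a diffeomorphism commutes with $\ed$, so $h_{-t}^*(\ed\phi) = \ed(h_{-t}^*\phi)$, and the $t$-integration may be interchanged with $\ed$ because $\rho$ is smooth with compact support, justifying differentiation under the integral sign. Then the duality pairing $\langle \smear(\current{C}), \phi\rangle = \langle \current{C}, \smear_{-h,\rho}(\phi)\rangle$, together with the standard definition of $\ed \current{C}$ as a current, yields $\ed \smear(\current{C}) = \smear(\ed \current{C})$ in a one-line chain of equalities.

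For smoothness on the open support $B = I_1 \times \dotsb \times I_n$, the key geometric observation is that on $B$ each interval flow $h_k(\cdot, t_k)$ acts transitively: since the defining vector field $\sigma_k(t)\dvect{t}$ has $\sigma_k > 0$ on $I_k$, for any $x_k, y_k \in I_k$ there is a unique $t_k \in \R$ with $h_k(x_k, t_k) = y_k$. Consequently the map $\Psi \colon B \times \R^n \to B \times B$ sending $(z, t) \mapsto (z, h_t(z))$ is a smooth diffeomorphism onto its image. Using $\Psi$ to change variables from the parameter $t$ to the point $y = h_t(z)$, the expression $\smear_{-h,\rho}(\phi)(z) = \int_{\R^n} h_{-t}^*(\phi)(z)\,\rho(t)\,dt$ becomes an integral against a smooth, compactly supported kernel in the $y$-variable (the Jacobian of $\Psi$ absorbs smoothly into $\rho$). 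Dualizing then exhibits $\smear(\current{C})|_B$ as the pairing of $\current{C}$ against a smoothly parametrized family of test forms indexed by $z \in B$, and this pairing depends smoothly on $z$ precisely because $\rho$ is smooth and $h$ depends smoothly on $t$. That smooth dependence in $z$ is exactly what it means for $\smear(\current{C})$ restricted to $B$ to be represented by a smooth differential form.

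The main obstacle, I expect, is executing the change of variables carefully at the level of forms rather than scalar densities: since $\phi$ may be a form of any degree, pullback by $h_{-t}$ involves the full Jacobian matrix, and one must verify that the resulting kernel in the mixed coordinates $(z, y)$ is smooth in both variables and preserves the correct form-degree bookkeeping. The product structure of the $n$-box flow keeps this tractable---one may reduce to the one-dimensional case along each $I_k$, where pullback by $h_k$ is elementary, and then reassemble via the product. Alternatively, one can sidestep the explicit kernel by invoking the general principle that integration of a smoothly parametrized family of pullback currents against a smooth compactly supported measure on the parameter space produces a smooth form on any open set where the parametrization is a submersion onto the target---which is precisely what transitivity of $h$ on $B$ supplies.
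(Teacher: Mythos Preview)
Your proposal is correct and follows essentially the same line as the paper's proof: commutation with $\ed$ is obtained from the form-level identity (pullback by diffeomorphisms commutes with $\ed$) and duality, and smoothness on the open support comes from recognizing the smear there as a convolution-type operator with smooth kernel. Your change-of-variables argument via the diffeomorphism $\Psi\colon (z,t)\mapsto (z,h_t(z))$ is precisely the content underlying the paper's one-line assertion that ``on the open support of the smear, a smear is just convolution with a smooth function''---the paper implicitly straightens the flow to translations, whereas you change variables in the $t$-integral instead, which amounts to the same thing.
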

\begin{proof}
We remark that it is clear that 
$\ed \bigl(\smear(\phi)\bigr)=\smear(\ed \phi)$
for forms $\phi$, and consequently for currents $\phi$ via the definition.

Because on the open support of the smear, a smear is just convolution with a smooth
function, then we see that if $V$ is an open subset of the open support
of smear $\smear$ on $U$ then for any current $\current{C}$ on $U$,
$\smear(\current{C})\rest{V}$ is a smooth form on $V$.
\end{proof}

\begin{proposition}
\label{sheafIsSoft}
Let $\sheaf{B}$ be a sheaf of degree $k$ currents.
Assume that $\sheaf{B}$ contains the sheaf of smooth
$k$ forms on $X$, 
and that
$\sheaf{B}(U)$ is closed under smears on any $n$-box $U\subset X$.
Let $\sheaf{B'}$ be the preimage under $\ed$ of 
$\sheaf{B}$ in the sheaf of degree $k-1$ currents.
Then $\sheaf{B'}$ is soft, and therefore, $\Gamma$-acyclic.
\end{proposition}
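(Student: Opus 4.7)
The plan is to prove $\sheaf{B'}$ is soft, from which $\Gamma$-acyclicity follows by the standard theorem that soft sheaves on paracompact Hausdorff spaces are acyclic (see e.g.\ \cite{bredon}). Softness means that for every closed $K\subset X$ and every $s\in\sheaf{B'}(U)$ on an open $U\supset K$, there is a global $s'\in\sheaf{B'}(X)$ agreeing with $s$ on some open neighborhood of $K$. A standard paracompactness argument reduces me to the case where $K$ is compact and sits inside an $n$-box $V$ with $\overline V\subset U$; the general case then follows by exhausting $K$ and $X$ by such local pieces.

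The construction has two steps: first I smear $s$ in an annular region separating $K$ from $\partial V$ to make the smeared current smooth there while leaving it unchanged near $K$; then I multiply by a smooth cutoff and extend by zero. Concretely, I pick nested open sets $K\subset W_2\subset\overline{W_2}\subset W_1\subset\overline{W_1}\subset V$ and a smooth cutoff $\chi$ with $\chi\equiv 1$ on $W_2$ and $\supp\chi\subset W_1$, so that $\supp(\ed\chi)$ is a compact subset of $W_1\setminus\overline{W_2}$. I cover $\supp(\ed\chi)$ by finitely many $n$-boxes $B_1,\dotsc,B_m$ contained in $W_1\setminus\overline{W_2}$, and let $\smear_i$ be a smear on $V$ with open support $B_i$. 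Set $\tilde s:=\smear_m\smear_{m-1}\dotsb\smear_1(s)$. Three properties are needed:
(i) $\tilde s=s$ on $W_2$, because each $\smear_i$ is the identity outside $B_i$ and $\overline{W_2}\cap\bigcup_iB_i=\emptyset$;
(ii) $\tilde s$ is smooth on an open neighborhood of $\supp(\ed\chi)$, by induction on $i$ via Lemma~\ref{smearFacts} (each $\smear_i$ produces a smooth form on its open support $B_i$, and subsequent smears, being the identity outside their own open supports, preserve smoothness already achieved while adding new smoothness elsewhere);
(iii) $\tilde s\in\sheaf{B'}(V)$, because $\ed\tilde s=\smear_m\dotsb\smear_1(\ed s)\in\sheaf{B}(V)$ by the commutation in Lemma~\ref{smearFacts} applied to $\ed s\in\sheaf{B}(U)$ together with the hypothesized closure of $\sheaf{B}$ under smears on $V$.

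Now I define $s'=\chi\tilde s$, extended by zero to all of $X$ (possible since $\supp\chi\subset\overline{W_1}\subset V$). Then $s'=s$ on $W_2$, a neighborhood of $K$, and
\[
\ed s' \;=\; \ed\chi\wedge\tilde s \;+\; \chi\,\ed\tilde s.
\]
The first summand is a smooth $k$-form because $\tilde s$ is smooth on $\supp(\ed\chi)$, and smooth forms lie in $\sheaf{B}$ by hypothesis. The second summand is a smooth cutoff multiplying a section of $\sheaf{B}(V)$; provided $\sheaf{B}$ is closed under multiplication by smooth functions --- a standard property of the sheaves of currents considered here and which appears to be tacitly assumed --- it lies in $\sheaf{B}(V)$, and since it is supported in $\overline{W_1}\subset V$, it extends by zero to a section of $\sheaf{B}(X)$ via the sheaf axiom. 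Hence $\ed s'\in\sheaf{B}(X)$ and $s'\in\sheaf{B'}(X)$, completing the construction. The principal subtlety is arranging the smears and the nested regions so that $\tilde s$ becomes smooth exactly on $\supp(\ed\chi)$ while still equaling $s$ on a full neighborhood of $K$: these two requirements must hold simultaneously, which is what forces the boxes $B_i$ to lie strictly in the annular shell $W_1\setminus\overline{W_2}$ rather than reaching all the way down to $K$.
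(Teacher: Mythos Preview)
Your approach is essentially the same as the paper's: smear the given section in an annular shell separating $K$ from the boundary of a box, then multiply by a smooth cutoff and extend by zero. The paper also reduces to the local case (citing that locally soft implies soft).

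There is one genuine difference in the final verification, and it matters because of the hypotheses as stated. You use the Leibniz rule $\ed s'=\ed\chi\wedge\tilde s+\chi\,\ed\tilde s$ and then need $\chi\,\ed\tilde s\in\sheaf{B}$, which forces you to assume that $\sheaf{B}$ is closed under multiplication by smooth functions. That hypothesis is \emph{not} among those listed in the proposition, and the paper's proof does not use it. The paper instead covers the \emph{entire} annular shell $\overline{V_2}\setminus V_1$ (not merely $\supp(\ed\chi)$) with boxes $Y_j$, so that the open sets $\{Y_j\}$, $\{\psi\equiv 1\}$, and $\{\psi\equiv 0\}$ form an open cover of the ambient box. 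Then $\ed(\psi\tilde s)$ is checked locally on this cover: on each $Y_j$ the current $\psi\tilde s$ is a smooth function times a smooth form, hence $\ed$ of it is smooth and lies in $\sheaf{B}$; where $\psi\equiv 1$ one has $\ed(\psi\tilde s)=\ed\tilde s\in\sheaf{B}$ by closure under smears; and where $\psi\equiv 0$ it vanishes. This avoids ever multiplying a general section of $\sheaf{B}$ by a nonconstant smooth function.

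The fix to your argument is immediate: enlarge your boxes $B_i$ so that they cover all of $\overline{W_1}\setminus W_2$ (still keeping them inside $V$ and disjoint from $K$), and then verify $\ed s'\in\sheaf{B}$ on the resulting open cover rather than via the Leibniz decomposition. With that change your proof matches the paper's and uses only the stated hypotheses.
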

\begin{proof}
To show that $\sheaf{B'}$ is soft it is sufficient to show
that $\sheaf{B'}$ is locally soft (\cite{bredon} page 69).
Given an $n$-box $U$ in $X$ we therefore only need to show
that if $K$ is a closed subset of $X$ in $U$
and if $W$ is an open neighborhood of $K$
then given any member $\current{B}_0'$ of $\sheaf{B'}(W)$
there is an open neighborhood $W_0\subset W$ of $K$
and a member $\current{B}'\in \sheaf{B'}(U)$ such that
$\current{B}'\rest{W_0}=\current{B}_0'\rest{W_0}$.

\begin{figure}
\label{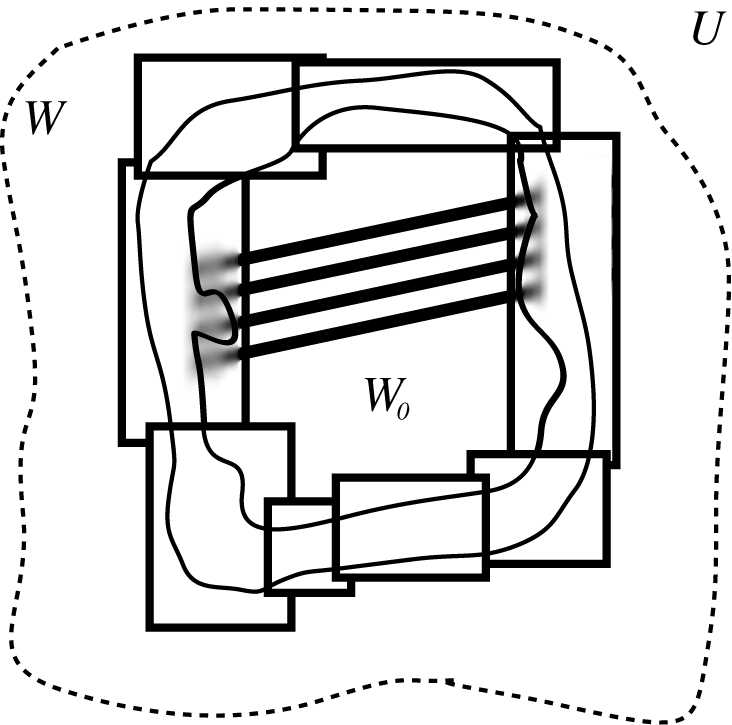}
\centerline{\includegraphics[width=3in,height=3in]{postproductionCurrent.jpg}} 
\caption{A current comprised of parallel submanifolds smeared and cropped.}
\end{figure}

Choose any pair of open sets $V_1,V_2$
such that $K\Subset V_1\Subset V_2\Subset W$.
Then $\overline{V_2}\setminus V_1$ is compact
and can therefore be covered by finitely many (open)
$n$-subboxes $Y_1\dotsc,Y_N$ of $U$. Moreover
these subboxes can all be chosen to be disjoint from
$K$ and to lie inside $W$. 
Letting $\smear_1,\dotsc,\smear_n$
be smears on $U$ with open support $Y_1,\dotsc,Y_N$ respectively then 
let $\current{B}=\smear_1(\smear_2(\dotsb(\smear_N(\current{\current{B}_0'}))\dotsb))$.
Then on each $Y_j$, $\current{B}$ is given by a smooth $k$ form.
Also, $\current{B}\rest{W}=\current{B}_0'\rest{W}$.
Finally, we choose a smooth function $\psi\colon U\to [0,1]$
which is one on a neighborhood of $\overline{V_1}$ and 
zero on a neighborhood of $U\setminus V_2$. Then
the current $\current{B}'\equiv\psi \current{B}$ 
extends (by zero) to a current on all of $U$. 
Then for each $Y_j$, $\current{B'}\rest{Y_j}$ is a smooth function
times a smooth form. Thus $\ed(\current{B'}\rest{Y_j})$ is a smooth
form and lies in $\sheaf{B}(Y_j)$.
The boxes $Y_j$ cover $\overline{V_2}\setminus V_1$.
Outside $V_2$, $\current{B'}$ is identically zero.
We know that $\ed\current{B}\in\sheaf{B}(W)$
by Lemma~\ref{smearFacts}. We also know that 
$\psi\equiv 1$ on an open neighborhood $W_1$ of $\overline{V_1}$.
Thus $\ed(\current{B'}\rest{W_1})=\ed(\current{B}\rest{W_1})\in\sheaf{B}(W_1)$.
We thus conclude that $\current{B'}\in\sheaf{B'}(U)$ since its restriction
to each $Y_j$, to $W_1$ and to $U\setminus\overline{V_2}$ is
a section of $\sheaf{B'}$.
Letting $W_0=V_2\setminus \overline{(Y_1\cup Y_2\cup\dotsb\cup Y_N)}$
then $W_0$ is an open neighborhood of $K$, then $W_0\subset W_1$ so
$\current{B'}\rest{W_0}=\current{B}\rest{W_0}=\current{B}_0'$
since $W_0$ is disjoint from the open support of each of the smears
$\smear_1,\dotsc,\smear_N$. This completes the proof that $\sheaf{B'}$ is 
soft.
\end{proof}

The following gives a broad generalization of the equalivalence of
the cohomology of currents with the deRham cohomology groups.
To the author's knowledge, this result is new.

\begin{corollary}
\label{subsheafCohomology}
Let $\sheaf{B}$ be a sheaf of degree $k$ currents.
Assume that $\sheaf{B}$ contains the sheaf of smooth
$k$ forms on $X$, 
and that
$\sheaf{B}(U)$ is closed under smears on any $n$-box $U\subset X$.
Letting $\sheaf{A}$ be the subsheaf of $\ed$ closed members
of $\sheaf{B}$, then 
\[H^m(X,\sheaf{A})=H^{m+k}(X,\K),\]
where $\K$ is $\R$ or $\C$ depending on whether
or not we allow complex valued currents and forms.
\end{corollary}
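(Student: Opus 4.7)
The plan is to apply Theorem~\ref{chainExchange} directly to the de~Rham complex of sheaves of currents on $X$. Let $\sheaf{S}_m$ be the sheaf of degree $m$ currents on $X$ and let $d_m = \ed$ be the exterior derivative. The hypothesis $\sheaf{B} \subset \sheaf{S}_k$ holds by assumption; define $\sheaf{A} = \ker(d_k\rest{\sheaf{B}})$ and $\sheaf{B}' = d_{k-1}^{-1}(\sheaf{B})$ as in Theorem~\ref{chainExchange}. The complex $\sheaf{S}_0 \to \sheaf{S}_1 \to \dotsb$ is exact by the Poincar\'e lemma for currents, and $\ker d_0 = \K$ because an $\ed$-closed distribution is locally constant.

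Next I would verify the acyclicity hypotheses. For $0 \leq m \leq k-1$, the sheaves $\sheaf{S}_m$ are modules over the (fine) sheaf of smooth functions, hence fine, hence soft, hence $\Gamma$-acyclic, so $H^j(X,\sheaf{S}_m) = 0$ for $j \geq 1$. For $\sheaf{B}'$, softness is exactly Proposition~\ref{sheafIsSoft}, giving $H^j(X,\sheaf{B}') = 0$ for $j \geq 1$. What remains is the acyclicity of $\sheaf{B}$ itself, which I would obtain by repeating the construction used for $\sheaf{B}'$: given a section $\current{B}_0 \in \sheaf{B}(W)$ over a neighborhood $W$ of a closed set $K$ in an $n$-box $U$, choose $K \Subset V_1 \Subset V_2 \Subset W$ and cover $\overline{V_2}\setminus V_1$ by $n$-subboxes $Y_1,\dotsc,Y_N$ disjoint from $K$ and contained in $W$; successively apply box smears supported on the $Y_j$ to obtain $\current{B} \in \sheaf{B}(W)$ (using closure under smears) which is smooth on each $Y_j$ and equal to $\current{B}_0$ on a neighborhood of $K$; then multiply by a cutoff $\psi$ equal to $1$ near $\overline{V_1}$ and $0$ near $U\setminus V_2$. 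On each $Y_j$ the product $\psi\current{B}$ is a smooth form (hence in $\sheaf{B}$ by the standing assumption that smooth $k$-forms lie in $\sheaf{B}$), on a neighborhood of $\overline{V_1}$ it equals $\current{B}$ itself, and on $U\setminus\overline{V_2}$ it is zero; these open sets cover $U$, so the sheaf gluing axiom produces $\psi\current{B} \in \sheaf{B}(U)$ extending $\current{B}_0$ near $K$. Thus $\sheaf{B}$ is locally soft, and therefore soft, so $H^j(X,\sheaf{B}) = 0$ for $j \geq 1$.

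With all hypotheses verified, Theorem~\ref{chainExchange} produces canonical isomorphisms
\[
H^m(X,\sheaf{A}) \cong H^{m+k}(X,\ker d_0) = H^{m+k}(X,\K)
\]
for each $m \geq 1$, which is the desired conclusion. The main obstacle is the acyclicity of $\sheaf{B}$, since Proposition~\ref{sheafIsSoft} was stated only for $\sheaf{B}'$; however the argument transfers almost verbatim because $\sheaf{B}$ contains all smooth $k$-forms and is preserved by smears, so the smear-then-cutoff construction lands directly in $\sheaf{B}$ without needing to invoke $\ed$ on the cutoff product. Everything else is a mechanical application of standard sheaf-theoretic facts (fineness of currents, Poincar\'e lemma for currents, distributional constancy in degree zero).
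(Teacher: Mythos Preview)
Your proposal is correct and follows the paper's own approach: apply Theorem~\ref{chainExchange} to the current complex with $\ker d_0=\K$, using Proposition~\ref{sheafIsSoft} for the acyclicity of $\sheaf{B}'$ and the fineness of the $\sheaf{S}_m$. The paper's one-line proof simply cites Proposition~\ref{sheafIsSoft} and Theorem~\ref{chainExchange}; you have additionally spelled out the acyclicity of $\sheaf{B}$ itself, which Theorem~\ref{chainExchange} requires but the paper leaves tacit. Your observation that the smear-then-cutoff argument of Proposition~\ref{sheafIsSoft} applies \emph{a fortiori} to $\sheaf{B}$ (no $\ed$ needed, since smooth $k$-forms already lie in $\sheaf{B}$ and $\sheaf{B}$ is closed under smears) is exactly right and fills that small gap.
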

\begin{proof}
This is an immediate consequence of Proposition~\ref{sheafIsSoft}
and Theorem~\ref{chainExchange}.
\end{proof}

\section{Invariant Currents}

\begin{notation}
If $\sheaf{G}$ is some sheaf of functions on a smooth orientable
manifold $X$ we will use $\forms{k}(\sheaf{G})$ to denote
the sheaf of $k$ forms on $X$ with coefficients in $\sheaf{G}$.
We will let $\clForms{k}(\sheaf{G})$ be the subsheaf of closed (in the
sense of currents) members of $\forms{k}(\sheaf{G})$.
\end{notation}

It will be convenient to use either degree or dimension of a current
depending on the context (just as dimension and codimension
are useful for discussing manifolds), so we will not stick
to just one of these terms.
We will let $\curDeg{k}$ denote the sheaf of degree $k$ currents with
the index written above as is typical for cohomology since $\ed$ 
increases the degree. We will similarly write $\curDim{k}$
for the sheaf of dimension $k$ currents with the index written below
since $\ed$ decreases dimension as is common for homology.
We use the following convention to realize a form $\alpha$ as a current
so that if $\alpha$ is $C^1$ then $\ed \alpha$ is the same whether computed
as a current or a form.

\begin{definition}
Given an $k$ form $\alpha$ with $L^1$ coefficients 
on an $n$ manifold $X$ we realize $\alpha$ as a degree $k$ current via
\[\beta\mapsto (-1)^{\binom{k+1}{2}}\int_X \alpha\wedge \beta\]
\end{definition}

\begin{definition}
Given a  (possibly complex)
nonzero deRham cohomology class $c\in \deRham^k(X)$ with
$f^*(c)=\alpha \cdot c$ for some scalar $\alpha \in\C$
we will refer to a current $\current{C}$ in the same cohomology
class as $\alpha$ as an eigencurrent for $f$ if
$f^*(\current{C})=\alpha \current{C}$.
\end{definition}

Currents naturally pushforward, rather than pullback. Because
we are considering maps which are not necessarily invertible
we need to address how this pullback is performed.
If $f$ has critical points it is impossible
to define a continuous pullback operation $f^*$
on all currents in a way that agrees with expected cases. For instance, consider $f(x)=x^2$
and let $\current{C}_a$ be the dimension one current on $\R$
with $\current{C}_a(h(x)\ed x)=h(a)$, i.e. $\current{C}_a$ is a unit mass vector.
Then the pullback
$f^*(\current{C}_a)$ should be the sum of weighted unit masses at the two preimages of this vector
(just like the pullback of a point mass is a sum of point masses each weighted by multiplicity),
that is, 
$f^*(\current{C}_a)=\frac{1}{2\sqrt{a}}\bigl(\current{C}_{\sqrt{a}}-\current{C}_{-\sqrt{a}}\bigr)$. 
However, these pullbacks do not converge to a current as $a\to 0$ 
so $f^*(\current{C}_0)$ is not defined.
Since we want $f^*$ to be continuous, we are forced to work with currents
that have some extremely mild regularity. We address this in the next section.

\subsection{Nimble Forms and Lenient Currents}

Finding a good set of currents to use to study
smooth finite self maps (not necessarily invertible) of compact manifolds turns out to
be rather delicate. 
Our solution is to first expand our class of forms to
include pushforwards (in the sense of currents) of forms through 
an appropriate class of smooth maps. Then we restrict
our attention to currents which act
on this extended class of forms. 

This solution has the very nice property that it can potentially be
adapted directly to study the dynamics
of other various other categories of smooth maps (by simply changing
which forms are considered nimble, according to the class of maps used).
It will convenient to first define the natural
pushforward operator on forms:

\begin{definition}
Given a compact orientable manifold $X$
we let $\cat_X$ be the category of smooth  
maps $f\colon X\to X$ of nonzero
degree and having the property that the critical set has measure zero.
We use critical set here to mean the points at which $Df$ is not
invertible.
\end{definition}

It follows from our definition that the image of any set of positive
measure under some $f\in \cat_X$ has positive measure. 

\begin{definition}
\label{nimbleForms}
Given a compact orientable manifold $X$ we define $\nimble^k$ to be those currents $\varphi$ 
which are a finite sum of currents of the form $p_*(\sigma)$ where
$p\colon X\to X$ is a map in $\cat_X$ and $\sigma$ is a form of degree
$k$. The pushforward $p_*(\sigma)$ is computed in the sense of currents.
\end{definition}

We will later show that nimble forms are also, in fact, bona fide forms.

\begin{definition}
\label{nimbleTopology}
We topologize $\nimble^k$ by saying $\varphi_j\to \varphi$ in $\nimble^k$
if for sufficiently large $j$ there are maps $f_1,\dotsc,f_k$
and $k$ forms $\sigma_{1j},\dotsc,\sigma_{kj}$ as well as forms
$\sigma_1,\dotsc,\sigma_k$ such that 
$\sum_i f_{i*}(\sigma_{ij})=\varphi_j$ and 
$\sum_i f_{i*}(\sigma_i)=\varphi$
(where pushforwards are taken in the sense of currents) 
and for each $i\in 1,\dotsc,k$, the forms $\sigma_{ij}$
converge to $\sigma_i$ in the strong sense (i.e. 
all derivatives converge uniformly).
\end{definition}

\begin{lemma}
Given a compact orientable manifold $Y$, $\nimble^k(Y)$ is a topological
vector space.
\end{lemma}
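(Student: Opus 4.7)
The plan is to verify the two vector space axioms and the continuity of the two vector space operations with respect to the convergence specified in Definition~\ref{nimbleTopology}, and then to observe that this suffices because the associated topology is the one whose closed sets are those containing all limits of their convergent sequences.

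First I would check that $\nimble^k(Y)$ is a vector space. Closure under addition is immediate from the definition: the sum of two finite sums of pushforwards $\sum p_{i*}(\sigma_i) + \sum q_{j*}(\tau_j)$ is again a finite sum of pushforwards. Closure under scalar multiplication follows from $\lambda\cdot p_*(\sigma) = p_*(\lambda\sigma)$ extended linearly. This step is essentially bookkeeping.

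Next I would verify continuity of addition. Suppose $\varphi_j\to\varphi$ and $\psi_j\to\psi$ in $\nimble^k(Y)$. By Definition~\ref{nimbleTopology}, for large $j$ we may write $\varphi_j = \sum_{i=1}^k f_{i*}(\sigma_{ij})$ and $\psi_j = \sum_{i=1}^\ell g_{i*}(\tau_{ij})$ with $\sigma_{ij}\to\sigma_i$ and $\tau_{ij}\to\tau_i$ strongly, and $\varphi=\sum f_{i*}(\sigma_i)$, $\psi=\sum g_{i*}(\tau_i)$. Then the concatenated representation $\varphi_j+\psi_j = \sum f_{i*}(\sigma_{ij})+\sum g_{i*}(\tau_{ij})$ uses the same maps $f_1,\dotsc,f_k,g_1,\dotsc,g_\ell$ and its form-coefficients converge strongly to those of $\varphi+\psi$, which is exactly the required convergence. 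For scalar multiplication, if $\lambda_j\to\lambda$ in $\K$ and $\varphi_j\to\varphi$ with representation $\varphi_j=\sum f_{i*}(\sigma_{ij})$, then $\lambda_j\varphi_j = \sum f_{i*}(\lambda_j\sigma_{ij})$ and $\lambda_j\sigma_{ij}\to\lambda\sigma_i$ strongly (uniform convergence of all derivatives is preserved under multiplication by a convergent sequence of scalars), so $\lambda_j\varphi_j\to\lambda\varphi$.

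The subtle point, and the one I would flag as the main obstacle, is that Definition~\ref{nimbleTopology} specifies a notion of \emph{sequential} convergence rather than directly a topology. To interpret the statement ``$\nimble^k(Y)$ is a topological vector space'' we take the associated topology, in which a set $F$ is declared closed if it contains every limit of every convergent sequence drawn from $F$; addition and scalar multiplication are then continuous because they respect sequential limits as shown above. One should also observe that constant sequences converge (use a fixed representation), that limits are unique (since strong convergence of the coefficient forms determines the limit as a current via pushforward), and that subsequences of convergent sequences converge to the same limit, all of which follow directly from the corresponding properties of strong convergence of smooth forms. Together these verifications establish that $\nimble^k(Y)$ is a topological vector space over $\K$.
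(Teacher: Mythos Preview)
Your proof is correct and is essentially the same approach as the paper's, which dispatches the lemma in a single sentence (``This follows easily from our definition of the topology''); you have simply written out the routine verifications that the paper elides. The one point worth noting is that your concern about sequential convergence versus a genuine topology is real but goes beyond what the paper itself addresses---the paper is content to work with the convergence notion directly, since the only use made of the lemma is to form the topological dual $\lenient_k(Y)$, for which sequential continuity of linear functionals is all that is actually needed.
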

\begin{proof}
This follows easily from our definition of the topology.
\end{proof}

We now define the corresponding space of currents.

\begin{definition}
We define the dimension $k$ {\it lenient currents} $\lenient_k(Y)$ 
to be the topological dual of $\nimble^k(Y)$. 
Every member of $\lenient_k(Y)$ is a dimension $k$ current, but
with the added structure of its action on all nimble $k$ forms.
We give $\lenient_k$ the weak topology, i.e. $\current{C}_i\to\current{C}$
in $\lenient_k$ iff $<\current{C}_i,\varphi>\to<\current{C},\varphi>$
for every $\varphi\in\nimble^k$.
We write $\lenient^k$ for the lenient currents of degree $k$.
\end{definition}

We define operations of wedge products with smooth forms as is usual
for currents.
It is clear that the lenient dimension $k$ currents give a sheaf
on $X$.

The following properties of nimble forms are also immediately clear.

\begin{lemma}
Let $f\colon X\to X$ be a member of $\cat_X$.
The pushforward (as a current) of a nimble $k$ form by $f$
is again a nimble form. Moreover 
$f_*\colon \nimble^k(X)\to\nimble^k(X)$ is continuous
(in the topology of nimble forms).
Also the exterior derivative of a nimble form (as a current)
is a nimble form and 
$\ed \colon \nimble^k(X)\to \nimble^{k+1}(X)$ is 
continuous. 
\end{lemma}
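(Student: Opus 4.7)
The plan is to reduce every statement to two ingredients: functoriality of pushforward of currents under composition of smooth maps, and the identity $\ed\circ p_* = p_*\circ\ed$ on currents. Both are standard once one checks that $\cat_X$ is closed under composition; everything else then follows mechanically from the definition of $\nimble^k$ and its topology.

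First I would write an arbitrary nimble form as $\varphi=\sum_i p_{i*}(\sigma_i)$ with $p_i\in\cat_X$ and $\sigma_i$ smooth, and compute $f_*\varphi = \sum_i (f\circ p_i)_*(\sigma_i)$ using functoriality. To know this is nimble I must verify $f\circ p_i\in\cat_X$. Smoothness is clear, the degree is multiplicative and therefore nonzero, and the critical set of $f\circ p_i$ is contained in $\operatorname{crit}(p_i)\cup p_i^{-1}(\operatorname{crit}(f))$. The first piece is null by hypothesis; for the second, off its critical set $p_i$ is a local diffeomorphism and hence locally pulls null sets back to null sets, so the whole preimage is null. Continuity of $f_*$ then drops out of Definition~\ref{nimbleTopology}: if $\varphi_j\to\varphi$ with $\varphi_j=\sum_i p_{i*}(\sigma_{ij})$, $\varphi=\sum_i p_{i*}(\sigma_i)$, and $\sigma_{ij}\to\sigma_i$ strongly, then the same strongly convergent forms $\sigma_{ij}$ paired with the maps $f\circ p_i\in\cat_X$ exhibit $f_*\varphi_j\to f_*\varphi$.

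For the exterior derivative I would use $\ed(p_*\sigma)=p_*(\ed\sigma)$, which follows by duality: for a smooth test form $\beta$, the chain $\langle \ed(p_*\sigma),\beta\rangle=\pm\langle p_*\sigma,\ed\beta\rangle=\pm\langle\sigma,p^*\ed\beta\rangle=\pm\langle\sigma,\ed p^*\beta\rangle=\pm\langle p_*\ed\sigma,\beta\rangle$ uses only that $p^*$ commutes with $\ed$ on smooth forms and that $p^*\beta$ is again smooth. Thus $\ed\varphi=\sum_i p_{i*}(\ed\sigma_i)$ is again nimble, with the same maps $p_i$ as for $\varphi$. Continuity of $\ed\colon\nimble^k\to\nimble^{k+1}$ is then immediate, because strong convergence of smooth forms (uniform convergence of all derivatives) passes to strong convergence of their exterior derivatives, so $\sigma_{ij}\to\sigma_i$ strongly implies $\ed\sigma_{ij}\to\ed\sigma_i$ strongly, which is exactly the convergence condition required to witness $\ed\varphi_j\to\ed\varphi$ in $\nimble^{k+1}$.

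The one point requiring genuine thought is the closure of $\cat_X$ under composition, specifically that the critical set of $f\circ p_i$ is null; this rests on the observation that $p_i$ is a local diffeomorphism off its critical set and therefore pulls null sets back to null sets. Everything else is formal bookkeeping using the definitions of nimble forms and their topology together with the standard duality identity for pushforward of currents.
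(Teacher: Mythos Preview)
Your proposal is correct. The paper actually gives no proof at all for this lemma, introducing it with the sentence ``The following properties of nimble forms are also immediately clear,'' so your detailed verification --- checking that $\cat_X$ is closed under composition (including the null-preimage argument, which the paper already records in the form ``the image of any set of positive measure under some $f\in\cat_X$ has positive measure''), then unwinding Definitions~\ref{nimbleForms} and~\ref{nimbleTopology} together with the duality identity $\ed\circ p_*=p_*\circ\ed$ --- is exactly the natural way to fill in what the paper takes as evident.
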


The basic necessary facts about pulling back lenient currents are
then immediate. We state them here:

\begin{lemma}
\label{pullbackBasics}
Given $f\colon X\to X$ a member of $\cat_X$ 
the induced map $f^*$ on the sheaf of lenient 
degree $k$ currents is an $f$ cohomomorphism of sheaves.
Both $f^*\colon \lenient^k(X)\to\lenient^k(X)$
and $\ed\colon \lenient^k(X)\to\lenient^{k+1}(X)$ are continuous.
Lastly, $f^*\ed =\ed f^*\colon\lenient^k(Y)\to\lenient^{k+1}(X)$.
\end{lemma}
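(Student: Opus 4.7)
The plan is to define $f^*$ on lenient currents by duality with $f_*$ on nimble forms, then read off all of the stated properties from the corresponding (already established) properties of $f_*$ and $\ed$ on $\nimble^k$. For a lenient current $\current{C}\in \lenient^k(U)$ and a nimble form $\varphi$ supported in $f^{-1}(U)$ of the complementary degree, define
\[
\langle f^*\current{C},\varphi\rangle := \langle \current{C}, f_*\varphi\rangle .
\]
The preceding lemma guarantees that $f_*\varphi$ is again nimble of the same degree, and, since the pushforward of a current supported in $f^{-1}(U)$ is supported in $U$, the pairing on the right-hand side is defined. Thus $f^*$ gives a well-defined $\K$-linear map $\lenient^k(U)\to\lenient^k(f^{-1}(U))$. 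Compatibility with restriction of opens is automatic because the defining formula is local in $\operatorname{supp}\varphi$, so $f^*$ is indeed an $f$-cohomomorphism of sheaves.

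Continuity of $f^*\colon \lenient^k(X)\to \lenient^k(X)$ in the weak topology is then immediate: if $\current{C}_i\to\current{C}$ in $\lenient^k$ then for each nimble form $\varphi$,
\[
\langle f^*\current{C}_i,\varphi\rangle = \langle \current{C}_i,f_*\varphi\rangle \to \langle \current{C},f_*\varphi\rangle = \langle f^*\current{C},\varphi\rangle .
\]
Similarly, continuity of $\ed\colon \lenient^k\to\lenient^{k+1}$ in the weak topology follows by dualizing the preceding lemma's continuity of $\ed$ on $\nimble^k$, via the defining relation $\langle \ed\current{C},\varphi\rangle = \pm\langle \current{C},\ed\varphi\rangle$ (with the sign dictated by the degree/dimension convention adopted just before the subsection).

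The identity $f^*\ed=\ed f^*$ reduces, after pairing against a nimble test form $\varphi$, to the classical commutation $f_*\,\ed\varphi=\ed f_*\varphi$ for nimble (hence smooth-after-pushforward) forms: testing one side gives $\langle \current{C}, f_*\ed\varphi\rangle$ up to sign, and testing the other gives $\langle \current{C},\ed f_*\varphi\rangle$ up to the same sign. The remaining content is therefore the formal fact that pushforward of currents commutes with $\ed$, which is a direct consequence of its definition by duality with smooth pullback of forms, for which the commutation is immediate.

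The main obstacle is the sheaf-theoretic bookkeeping: nimble forms were defined globally on the compact manifold $X$, so to interpret $f^*$ as an honest $f$-cohomomorphism one must check that restricting $\current{C}$ to $U$, pushing forward test forms from $f^{-1}(U)$, and then pairing is consistent with the global construction. The point that makes this work is simply that $\operatorname{supp}(f_*\varphi)\subset f(\operatorname{supp}\varphi)$, so when $\varphi$ is supported in $f^{-1}(U)$ the form $f_*\varphi$ is supported in $U$ and the pairings localize correctly; everything else is formal duality.
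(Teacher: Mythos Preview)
Your proposal is correct and follows exactly the approach the paper intends: the paper offers no proof beyond the remark that ``the basic necessary facts about pulling back lenient currents are then immediate,'' and your argument is precisely the duality elaboration of the preceding lemma on $f_*$ and $\ed$ acting on nimble forms that justifies that claim.
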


\begin{proposition}
\label{pushforwardResults}
Assume that $f\colon X\to X$ is a member of $\cat_X$. Let $R$
be the regular set of $f$. By Sard's theorem $R$ has full measure.
Since the critical set is compact then $R$ is an open subset of $X$. 
Since the preimage of a measure zero set has measure zero for $\cat_X$ maps
then $f^{-1}(R)$ is also a full measure open set in $X$.
There is a well defined operation $f_\star$ which maps
$k$ forms on $f^{-1}(R)$ to $k$ forms on $R$. Given a $k$ form $\beta$ on $X$,
$f_\star(\beta)$ is defined on any open subset $V\subset R$ such that
each component $U_1,\dotsc,U_m$ of $f^{-1}(V)$ maps diffeomorphically onto $V$
by the formula
\begin{equation}
\label{pushforward}
f_\star(\beta)\rest{V}\equiv \frac{1}{\deg f}\sum_i\Bigl((f\rest{U_i})^{-1}\Bigr)^\star(\beta)\cdot \sigma_i
\end{equation}
where $\sigma_i\in\{\pm 1\}$ is the oriented degree of $f\rest{U_i}\colon U_i\to V$.
The pushforward $f_\star$ satisfies:
\begin{itemize}
\item $f_\star \ed=\ed f_\star$ (keeping in mind that $f_\star$ returns a current on $R$)
\item $f_\star(1)=1$
\item $f_\star(f^*(\beta)\wedge \alpha)=\beta\wedge f_\star(\alpha)$
\item $(f_\star)^n=(f^n)_\star$
\item The formula 
\begin{equation}
\label{fundamental}
\ds{\int_X f^*(\beta)\wedge\alpha = \int_X \beta\wedge f_\star(\alpha)}
\end{equation}
holds for any
$k$ form $\beta$ with $\bounded$ coefficients on $Y$ and any smooth $n-k$ form $\alpha$ on $X$.
This justifies using $f_\star$ to pullback currents. (Part of the conclusion is that both
sides are integrable.) 
\end{itemize}
\end{proposition}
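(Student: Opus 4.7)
The plan is to establish well-definedness of $f_\star$ on the regular set $R$, verify the algebraic identities by direct local computations on the sheets, treat the iteration identity via the chain rule, and finally prove the integration formula by change of variables followed by a density argument to reach $L^\infty_{\text{loc}}$ coefficients.

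For well-definedness, since $f$ restricted to $f^{-1}(R)\to R$ is a local diffeomorphism with finite fibers (by compactness of $X$), every $y\in R$ has a neighborhood $V\subset R$ whose preimage splits as a disjoint union of sheets $U_1\sqcup\dots\sqcup U_m$ each mapping diffeomorphically onto $V$. Formula~\eqref{pushforward} is compatible under shrinking $V$ and so defines a smooth form on all of $R$. Taking $\beta=1$ and using that the oriented degree equals $\sum_i\sigma_i$ at every regular value gives $f_\star(1)=1$. The commutation $f_\star\ed=\ed f_\star$ is immediate since each local-inverse pullback commutes with $\ed$, and the projection formula follows from $((f\rest{U_i})^{-1})^*\circ f^*=\id{}$ on $V$, which allows $\beta$ to be pulled outside the weighted sum defining $f_\star$.

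For $(f_\star)^n=(f^n)_\star$, I would work on the full-measure open set $R\cap f^{-1}(R)\cap\cdots\cap f^{-(n-1)}(R)$. Over a point in this set, the sheets of $(f^n)^{-1}$ correspond via the chain rule to composable $n$-tuples of sheets of $f^{-1}$; the signs of the successive local inverses multiply, and $\deg(f^n)=(\deg f)^n$, so iterating~\eqref{pushforward} reproduces the formula for $f^n$ directly.

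The main content is the integration formula~\eqref{fundamental}. I would first establish it for smooth $\beta$ using a partition of unity on $R$ subordinate to sheet-trivializations $V$; on each patch, substituting the definition of $f_\star\alpha$ and performing the change of variables $y=f(x)$ on each sheet $U_i$ (whose Jacobian contributes exactly $\sigma_i$) causes the $\sigma_i^2=1$ factors to collapse, and summing over sheets reassembles the integral over $f^{-1}(V)$. Since $X\setminus R$ and $X\setminus f^{-1}(R)$ have measure zero, the local identities glue to the global one. The main obstacle is extending to $\beta$ with only $L^\infty_{\text{loc}}$ coefficients, because $f_\star\alpha$ may blow up near the critical set so the integrand on the right-hand side is a priori not obviously integrable. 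I would approximate $\beta$ in $L^1$ by smooth forms uniformly bounded in $L^\infty$, apply the smooth case, and pass to the limit: the right-hand side $\int_X f^*\beta\wedge\alpha$ depends continuously on the approximation because $\alpha$ is smooth and $f^*\beta$ is uniformly $L^\infty$, which forces the left-hand integral to converge as well, simultaneously establishing integrability of $\beta\wedge f_\star\alpha$ and the desired identity.
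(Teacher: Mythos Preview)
Your treatment of well-definedness and the algebraic identities matches the paper's: it simply says ``each statement is a consequence of formula~\eqref{pushforward}'' and moves on. The divergence is entirely in the proof of~\eqref{fundamental}, and there your density argument has a genuine gap.

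The problem is in the last step. You approximate $\beta$ by smooth $\beta_j$ (bounded in $L^\infty$, converging in $L^1$), apply the smooth case to get $\int f^*\beta_j\wedge\alpha=\int\beta_j\wedge f_\star\alpha$, and then argue that since the left sides converge, ``this forces the right-hand integral to converge as well, simultaneously establishing integrability of $\beta\wedge f_\star\alpha$.'' But convergence of the \emph{numbers} $\int\beta_j\wedge f_\star\alpha$ does not by itself imply that $\beta\wedge f_\star\alpha$ is integrable or that its integral is the limit. To pass to the limit you would need dominated convergence, and the natural dominator is $\|\beta\|_\infty\,|f_\star\alpha|$ --- so you need $f_\star\alpha\in L^1(R)$ \emph{before} you can run the approximation. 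You never establish this. (The same issue is already lurking in your ``smooth case'': the partition of unity on the noncompact set $R$ may be infinite, and interchanging the sum with the integral again requires an absolute-summability bound of exactly this type.)

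The paper sidesteps approximation entirely. It first observes that the left side of~\eqref{fundamental} is a bounded integrand on a compact manifold, hence finite. Then it uses the projection formula you already proved, $\beta\wedge f_\star\alpha=f_\star(f^*\beta\wedge\alpha)$, to reduce to showing
\[
\int_{f^{-1}(R)}\gamma=\int_R f_\star\gamma
\]
for the $L^\infty$ top-degree form $\gamma=f^*\beta\wedge\alpha$. The integrability of the right side is proved \emph{directly}: on any sheet-trivialized $V$ one has, by the triangle inequality and change of variables on each sheet,
\[
\int_V|f_\star\gamma|\le\int_{f^{-1}(V)}|\gamma|,
\]
and summing over a countable disjoint measurable decomposition of $R$ gives $\int_R|f_\star\gamma|\le\int_{f^{-1}(R)}|\gamma|<\infty$. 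With integrability in hand, the same computation without absolute values yields the identity. If you want to salvage your route, insert exactly this $L^1$ estimate for $f_\star\alpha$ (equivalently for $f_\star\gamma$) before the approximation step; once you have it, dominated convergence closes your argument. Without it, the density step does not stand on its own.
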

\begin{proof}
Each statement is a consequence of formula~\eqref{pushforward} except the integrability
conclusion for equation~\eqref{fundamental}.
Local charts  can be given which are bounded subsets of $\R^n$
and for which $Df$ remains uniformly bounded (over each of the 
charts) and thus $f^*(\beta)$ will be a form with $\bounded$ coefficients
in these charts. Thus the left hand side of \eqref{fundamental} is 
the integral of a bounded function over a finite union of bounded charts
and is therefore absolutely integrable.
Since $f_\star(f^*(\beta)\wedge \alpha)=\beta\wedge f_\star(\alpha)$ 
it is sufficient to show that if $\gamma$ is an 
$n$ form with $\bounded$ coefficients then 
\begin{equation}
\label{whatToProve}
\int_{f^{-1}(R)} \gamma=\int_R f_\star(\gamma).
\end{equation}
Typicaly $f_\star(\gamma)$ is unbounded so we need to show
that the right hand side of \eqref{whatToProve} is integrable. About any point 
$x\in R$ we can find an open $V$ such that each of the preimages $U_1,\dotsc,U_k$
of $V$ is mapped diffeomorphically onto $V$. Since $X$ is orientable and $n$ dimensional there
is a well defined notion of the absolute value of an $n$ form.
Then
\[\int_V \abs{f_\star(\gamma)}\leq 
\frac{1}{\deg f}\sum_i \int_{V} \Big\arrowvert\Bigl((f\rest{U_i})^{-1}\Bigr)^\star(\gamma)\Big\arrowvert
=\sum_i \int_{U_i} \abs{\gamma}=\int_{f^{-1}(V)} \abs{\gamma}.\]
Now $R$ is covered by countably many such sets $V$ and listing them as
$V_0,V_1,V_2,\dotsc,$ we can let
$V'_0=V_0, V'_1=V_1\setminus V_0, V'_2=V_2\setminus (V_0\cup V_1),\dotsc$.
Then $R$ is the union of the countable collection of disjoint measurable
sets $V'_j$ and 
\[\int_R \abs{f_\star(\gamma)} = \sum_j \int_{V_j} \abs{f_\star(\gamma)}\leq 
\sum_j \int_{f^{-1}(V_j)} \abs{\gamma}=\int_{f^{-1}(R)} \abs{\gamma}.\]
Since $\int_{f^{-1}(R)} \abs{\gamma}$ is finite then 
$f_\star(\gamma)$ is an $L^1$ form. Using precisely the same
argument but with the absolute values removed and the inequalities
replaced with equalities then shows
$\int_R f_\star(\gamma)=\int_{f^{-1}(R)} \gamma$.
\end{proof}

Since $R$ and $f^{-1}(R)$ are open and full measure then
$f_\star$ is an operator which takes in forms on $X$ and returns forms 
defined almost everywhere on $X$.

We now show that nimble forms are bona fide forms. 

\begin{lemma}
\label{nimbleAreForms}
If $g\colon X\to X$ is a map in $\cat_X$ 
and $\sigma$ is a smooth $k$ form on $X$
then the current $g_*(\sigma)$
is the current of integration against the form $g_\star(\sigma)$.
\end{lemma}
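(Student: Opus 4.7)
The plan is to test both currents against an arbitrary smooth test form $\phi$ of degree $n-k$ (where $n=\dim X$), and to show that they produce the same value. First, by the definition of current pushforward and the convention for realizing a smooth form as a current,
$$\langle g_*(\sigma),\phi\rangle = \langle \sigma, g^*(\phi)\rangle = (-1)^{\binom{k+1}{2}}\int_X \sigma\wedge g^*(\phi).$$
Next, Proposition~\ref{pushforwardResults} established that although $g_\star(\sigma)$ is only defined on the full-measure open set $R\subset X$ a priori, it is in fact an $L^1$ form on $X$, so it defines a current via integration, and pairing this current against $\phi$ yields
$$(-1)^{\binom{k+1}{2}}\int_X g_\star(\sigma)\wedge \phi.$$

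Thus the lemma reduces to proving the identity $\int_X \sigma\wedge g^*(\phi) = \int_X g_\star(\sigma)\wedge\phi$. Swapping the wedge factors on both sides introduces the same sign $(-1)^{k(n-k)}$, so this is equivalent to
$$\int_X g^*(\phi)\wedge\sigma = \int_X \phi\wedge g_\star(\sigma),$$
which is exactly equation~\eqref{fundamental} of Proposition~\ref{pushforwardResults} applied with the smooth $(n-k)$-form $\phi$ (certainly having $\bounded$ coefficients, since $\phi$ is smooth on a compact manifold) playing the role of $\beta$, and the smooth $k$-form $\sigma$ playing the role of $\alpha$. The complementary-degree hypothesis of~\eqref{fundamental} is satisfied, and the formula applies directly.

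Since the equality $\langle g_*(\sigma),\phi\rangle = \langle g_\star(\sigma),\phi\rangle_{\text{current}}$ then holds for every smooth test $(n-k)$-form $\phi$, the two currents coincide. The main (and quite minor) obstacle is bookkeeping the degree and sign conventions between the two pushforward notations $g_*$ and $g_\star$ and the sign in the convention realizing a form as a current; the substantive content has already been done in Proposition~\ref{pushforwardResults}, in particular in the argument establishing $L^1$-integrability of $g_\star(\gamma)$ for $\bounded$ input $\gamma$.
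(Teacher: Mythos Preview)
Your proof is correct and follows essentially the same approach as the paper's own proof: both pair against an arbitrary smooth test form, unwind the definition of $g_*$, and invoke equation~\eqref{fundamental} of Proposition~\ref{pushforwardResults}. You are in fact slightly more careful than the paper in explicitly tracking the sign $(-1)^{k(n-k)}$ from swapping wedge factors, which the paper's one-line chain of equalities leaves implicit.
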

\begin{proof}
If $\varphi$ is a smooth $n-k$ form then by definition
$<g_*(\sigma),\varphi>=<\sigma,g^*(\varphi)>=
(-1)^{\binom{k+1}{2}}\int_X \sigma\wedge g^*(\varphi)=
(-1)^{\binom{k+1}{2}}\int_X g_\star(\sigma)\wedge \varphi=
<g_\star(\sigma),\varphi>$ by formula \eqref{fundamental} of Proposition~\ref{pushforwardResults}
\end{proof}

As described in \cite{federer}, an inner product on a vector space 
$V$ can be viewed as an isomorphism $\ell\colon V\to V^*$
satisfying certain properties. The inverse of $\ell$ gives the induced
inner product on $V^*$. The fact that $<v,w>\leq \|v\|\cdot \|w\|$
with equality iff $v$ and $w$ are scalar multiples implies that
the inner product norm on $V^*$ is the same as the operator norm
of $V^*$ acting on $V$.

The induced map $\bigwedge^k\ell\colon \bigwedge^k V\to \bigwedge^k V^*$
gives an inner product on $\bigwedge^k V$. We call this the canonical
inner product on $\bigwedge^k V$ induced by the inner product on $V$.
Hence, given a Riemannian metric on $X$, there are canonical
smoothly varying inner products on $\bigwedge^k T_x X$ and 
$\bigwedge^k T_x^* X$ for each $x\in X$. At any point $x\in X$
we define $\| \bigwedge^k D_xf\|$ to be the operator norm of the linear
function $\bigwedge^k D_xf\colon \bigwedge^k T_x X\to \bigwedge^k T_{f(x)} X$.
We define $\| \bigwedge^k Df\|$ to be the $\bounded$ norm
of the map $x\mapsto \|\bigwedge^k D_x f\|$.
Also, given a $k$ form $\varphi$ we define 
the {\it comass} $\|\varphi\|_{\bounded}$ of $\varphi$ to be the $\bounded$ norm of the function
$x\mapsto \|\bigwedge^k \varphi_x \|$.
It is clear that the $k$ forms
with the comass norm is a Banach space.
We now show that the $k$ forms with $\bounded$ coefficients
are naturally lenient currents. 
We start by defining the action
on nimble forms.

\begin{definition}
\label{formsAsCurrent}
Given an $n-k$ form $\current{C}$ with $\bounded$ coefficients
we define
\[<\current{C},p_*(\sigma)>=(-1)^{\binom{n-k+1}{2}}\int_X \current{C}\wedge p_\star(\sigma)\]
\end{definition}

\begin{lemma}
The space $\forms{n-k}(\bounded)$ of $n-k$ forms with $\bounded$ coefficients
under the comass norm includes continuously
into $\lenient_{k}(X)$ where the action of $\current{C}\in\forms{n-k}(\bounded)$
on some $\varphi=\sum_i f_{i*}(\sigma_i)\in\nimble^k(X)$, with each $f_i\in \cat_X$
and each $\sigma_i\in\forms{k}(\smooth)$ is given by
\[<\current{C},\varphi>\equiv \sum_i\int_X f_i^*(C)\wedge \sigma_i.\]
\end{lemma}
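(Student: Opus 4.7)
The plan is threefold: establish that the proposed formula $\langle\current{C},\varphi\rangle=\sum_i\int_X f_i^*(\current{C})\wedge\sigma_i$ is well-defined (independent of the representation $\varphi=\sum_i f_{i*}(\sigma_i)$); verify that the resulting functional $\langle\current{C},\cdot\rangle$ is continuous on $\nimble^k(X)$ so that it genuinely lies in $\lenient_k(X)$; and finally check that the map $\current{C}\mapsto\langle\current{C},\cdot\rangle$ from $\forms{n-k}(\bounded)$ (with the comass norm) into $\lenient_k(X)$ (with the weak topology) is continuous.

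For well-definedness, I would apply equation~\eqref{fundamental} of Proposition~\ref{pushforwardResults} to each summand to rewrite $\int_X f_i^*(\current{C})\wedge\sigma_i=\int_X \current{C}\wedge(f_i)_\star(\sigma_i)$, which is an integral of a bounded form against an $L^1$ form (the $L^1$ claim is part of Proposition~\ref{pushforwardResults}). Summing then yields $\int_X\current{C}\wedge\sum_i(f_i)_\star(\sigma_i)$. By Lemma~\ref{nimbleAreForms}, each pushforward current $f_{i*}(\sigma_i)$ is the current of integration against the form $(f_i)_\star(\sigma_i)$, so $\sum_i(f_i)_\star(\sigma_i)$ is an $L^1$ representative for the current $\varphi$. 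Since two $L^1$ forms inducing the same current on $X$ must agree almost everywhere, the integral depends only on $\varphi$ and not on the chosen decomposition. Linearity in both arguments is then immediate.

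For continuity in $\varphi$, recall from Definition~\ref{nimbleTopology} that $\varphi_j\to\varphi$ in $\nimble^k$ means (for large $j$) compatible decompositions with common maps $f_1,\dotsc,f_\ell$ and $\sigma_{ij}\to\sigma_i$ strongly. Then
\[
\langle\current{C},\varphi_j\rangle-\langle\current{C},\varphi\rangle=\sum_i\int_X f_i^*(\current{C})\wedge(\sigma_{ij}-\sigma_i),
\]
which tends to zero because $f_i^*(\current{C})$ has $\bounded$ coefficients on $X$ (using that $f_i\in\cat_X$ and $\current{C}$ has $\bounded$ coefficients) while $\sigma_{ij}-\sigma_i\to 0$ uniformly. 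Hence $\langle\current{C},\cdot\rangle\in\lenient_k(X)$. For continuity in $\current{C}$, take $\current{C}_m\to\current{C}$ in the comass norm and fix any $\varphi=\sum_i f_{i*}(\sigma_i)\in\nimble^k(X)$. The pointwise comass bound $\|f_i^*(\current{C}_m-\current{C})_x\|\leq\|\bigwedge^{n-k}D_xf_i\|\cdot\|(\current{C}_m-\current{C})_{f_i(x)}\|$ yields
\[
|\langle\current{C}_m-\current{C},\varphi\rangle|\leq\Bigl(\sum_i\|\bigwedge^{n-k}Df_i\|_{\bounded}\cdot\|\sigma_i\|_{\text{comass}}\Bigr)\operatorname{vol}(X)\cdot\|\current{C}_m-\current{C}\|_{\text{comass}},
\]
which tends to zero. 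Since this holds for every $\varphi\in\nimble^k$, the inclusion is continuous with respect to the weak topology on $\lenient_k(X)$.

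The main obstacle is the well-definedness step: the representation $\varphi=\sum_i f_{i*}(\sigma_i)$ is highly non-unique (terms can be split, pre-composed with diffeomorphisms, etc.), and without Lemma~\ref{nimbleAreForms} it would not be obvious that distinct decompositions lead to the same integral. The unifying observation is that a nimble form is, in fact, a bona fide form, so the pairing with $\current{C}$ reduces to the integral of an $\bounded$ form against an $L^1$ form, which is plainly decomposition-independent.
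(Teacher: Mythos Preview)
Your proof is correct and follows essentially the same approach as the paper's: the paper simply remarks that the result is a straightforward consequence of equation~\eqref{fundamental}, Lemma~\ref{nimbleAreForms}, and the definitions, and you have carefully unpacked exactly those ingredients (using \eqref{fundamental} to pass to $\int_X \current{C}\wedge (f_i)_\star(\sigma_i)$ and Lemma~\ref{nimbleAreForms} to see this depends only on the current $\varphi$). One small imprecision: what Proposition~\ref{pushforwardResults} directly gives is absolute integrability of the $n$-form $\current{C}\wedge (f_i)_\star(\sigma_i)$ rather than $L^1$-ness of the $k$-form $(f_i)_\star(\sigma_i)$ itself, but the latter follows by testing against locally constant $(n-k)$-forms, so the argument goes through.
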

\begin{proof}
The assumption that $X$ is compact means that any two Riemannian
metrics on $X$ are comparable. Choose one so the notion of
the comass norm makes sense. 
The result is then a straightforward consequence of
equation~\eqref{fundamental}, Lemma~\ref{nimbleAreForms}, 
and our definitions.
\end{proof}

\begin{remark}
It follows that a current with local $\forms{k}(\bounded)$
potentials is also a lenient current. 
\end{remark}

\begin{remark}
\label{pullbacksAgree}
Given a member $\current{C}$ of $\forms{k}(\bounded)$
then $f^*(\current{C})$ is the same whether 
done as a lenient current or as a form.
This, along with the fact that $\ed f^*=f^*\ed$
justifies the ad hoc pullback of closed positive
$(1,1)$ currents used so successfully in holomorphic dynamics.
Similarly $\ed \current{C}$ gives the same result whether calculated
as a lenient current or a form if 
$\current{C}\in \forms{k}(C^1)$.
\end{remark}

\subsection{H\"older Lemmas}

We will want to apply Corollary~\ref{regularity} to show that each
eigencurrent we construct has local $\ed$ potentials (or $\ed\ed^c$
potentials in the holomorphic case) which are forms with H\"older
continuous coefficients. In order to do this we will need 
a few facts which we include here in order to avoid 
having to include regularization
results as afterthoughts to our main theorems.

\begin{observation}
\label{holderOne}
Let $\holder_\alpha$ be the functions with coefficients that are H\"older
of exponent at least equal to some fixed $\alpha > 0$. 
Since diffeomorphisms
preserve H\"older exponents and averages of H\"older functions
are H\"older then we take it as clear that
Corollary~\ref{subsheafCohomology} applies to
show that $H^1(X,\sheaf{A'})=H^1(X,\sheaf{A})$ where 
$\sheaf{A'}$ is the closed members of 
$\forms{k}(\holder_\alpha))$
and $\sheaf{A}$ is the closed degree $k$ currents.
\end{observation}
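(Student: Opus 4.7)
The plan is to invoke Corollary~\ref{subsheafCohomology} in two parallel situations, with $\sheaf{B}=\curDeg{k}$ (whose $\ed$-closed subsheaf is $\sheaf{A}$) and with $\sheaf{B}=\forms{k}(\holder_\alpha)$ (whose $\ed$-closed subsheaf is $\sheaf{A}'$). Each application yields a canonical isomorphism $H^1(X,\cdot)\cong H^{k+1}(X,\K)$, built up as a composite of connecting maps from Theorem~\ref{chainExchange}; the two chains share every constituent short exact sequence except the initial one $\ker d_{k-1}\to\sheaf{B}'\to\sheaf{A}$ (and its primed analogue). Naturality of the connecting homomorphism under the inclusion of the primed short exact sequence into the unprimed one will then identify the two isomorphisms compatibly with the inclusion $\sheaf{A}'\into\sheaf{A}$, which delivers the equality.

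The first application is immediate, since $\curDeg{k}$ contains smooth forms and smears of currents are currents by Lemma~\ref{smearFacts}. For the second, the one hypothesis that requires real verification is that $\forms{k}(\holder_\alpha)$ is closed under smears on each $n$-box. Using the duality identity preceding the definition of the smear of a current, a smear of $\phi\in\forms{k}(\holder_\alpha)$ coincides with the explicit integrated pullback
\[\smear_{h,\rho}(\phi)=\int_{\R^n}h_t^*(\phi)\,\rho(t)\,dt.\]
Because $\rho$ is compactly supported in $\R^n$, the parameter $t$ lies in a compact set over which $h_t$ varies through a compact family of smooth diffeomorphisms of the $n$-box. Hence each $h_t^*(\phi)$ is an $\alpha$-H\"older form whose H\"older seminorm is bounded uniformly in $t$, and the integral is therefore again an $\alpha$-H\"older form. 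This is exactly the statement invoked by the author that ``diffeomorphisms preserve H\"older exponents and averages of H\"older functions are H\"older,'' and is the only genuinely substantive step.

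The remaining compatibility check is bookkeeping rather than a real obstacle. The inclusion $\forms{k}(\holder_\alpha)\into\curDeg{k}$ together with $\ed$ induces a morphism of short exact sequences from $\ker d_{k-1}\to\sheaf{B}'\to\sheaf{A}'$ to $\ker d_{k-1}\to\curDeg{k-1}\to\sheaf{A}$ which is the identity on $\ker d_{k-1}$. Naturality of the long exact sequence of sheaf cohomology identifies the first connecting maps under the inclusion-induced map $H^1(X,\sheaf{A}')\to H^1(X,\sheaf{A})$, while the subsequent connecting maps are literally identical in the two chains. Hence the composite isomorphisms agree along the inclusion, and $H^1(X,\sheaf{A}')=H^1(X,\sheaf{A})$.
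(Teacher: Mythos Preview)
Your proposal is correct and follows the same approach the paper sketches: apply Corollary~\ref{subsheafCohomology} once with $\sheaf{B}=\curDeg{k}$ and once with $\sheaf{B}=\forms{k}(\holder_\alpha)$, the only substantive check being closure of $\forms{k}(\holder_\alpha)$ under box smears, which you justify exactly as the paper does (diffeomorphisms preserve H\"older exponents, averages of H\"older functions are H\"older). Your naturality argument identifying the two isomorphisms along the inclusion $\sheaf{A}'\hookrightarrow\sheaf{A}$ is a detail the paper leaves implicit but which is in fact needed for the intended application in Corollary~\ref{regularity}, where injectivity of the induced map $H^1(X,\sheaf{A}')\to H^1(X,\sheaf{A})$ is the hypothesis; so your extra care is well placed.
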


\begin{lemma}
\label{holderTwo}
Let $X$ be a compact manifold (real or complex) with a Riemannian metric
and of real dimension $n$.
Let $f\colon X\to X$ be a smooth map. Then local coordinate charts
$U_i$ can be chosen on $X$ (each representing a convex open subset of $\R^n$)
so that there is a positive constant
$1 < M$ so that for any $k$ form $\varphi$, there exist constants
$c,C > 0$ such that writing each $f^{k*}(\varphi)$ in any of the charts $U_i$ as
\[f^{k*}(\varphi)=\sum a_{ki} \ed x^{\wedge i}\]
then each function $a_{ki}$ satisfies
\begin{equation}
\label{firstFact}
\sup_{x\in U_i} \abs{a_{ki}} \leq c\cdot \comassNorm{f^{k*}(\varphi)}
\end{equation}
and for each $j\in 1,\dotsc,n$,
\[\sup_{x\in U_i} \Big\arrowvert\pder{a_{ki}}{{x_j}}\Big\arrowvert\leq C\cdot M^k.\]
\end{lemma}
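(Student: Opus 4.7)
The plan is to cover $X$ by finitely many convex coordinate charts on which the Riemannian metric is uniformly comparable to the Euclidean metric in coordinates, and then to deduce both inequalities from routine chain-rule estimates on $Df^k$ and $D^2 f^k$. By compactness of $X$ I can select a finite atlas $\{U_i\}$ of convex coordinate charts so that on each $U_i$ the Riemannian inner product on every exterior power $\bigwedge^d T_x^*X$ is pinched between fixed positive multiples of the Euclidean one.

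For the first inequality, each coefficient $a_{ki}(x)$ is the pairing of the covector $f^{k*}(\varphi)_x$ with a coordinate multivector $\dvect{x_{i_1}}\wedge\cdots\wedge\dvect{x_{i_d}}$, whose Riemannian norm is bounded uniformly on each chart by construction of the atlas. Hence $|a_{ki}(x)|\leq c\cdot\comassNorm{f^{k*}(\varphi)}$ with a constant $c$ depending only on the atlas, in particular independent of $k$ and of $\varphi$.

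For the derivative bound I would expand $f^{k*}(\varphi)$ in local coordinates: choosing a chart around $f^k(x)$ in which $\varphi=\sum_J\varphi_J\,\ed y^{\wedge J}$, one has
\[a_{k,I}(x)=\sum_J\varphi_J(f^k(x))\cdot\det M^{(k)}_{I,J}(x),\]
where $M^{(k)}_{I,J}$ is the $d\times d$ minor of $Df^k(x)$ selected by $I$ and $J$. Applying $\pder{}{x_j}$ and using the product and chain rules produces a finite sum of terms, each a product of (a) $\varphi_J$ or a first derivative of $\varphi_J$ evaluated at $f^k(x)$ and (b) at most $d$ entries of $Df^k(x)$ together with at most one entry of $D^2 f^k(x)$. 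Chain-rule induction on $k$ gives $\sup\|Df^k\|\leq(\sup\|Df\|)^k$ and $\sup\|D^2 f^k\|\leq C_1\,k\,\sup\|D^2 f\|\,(\sup\|Df\|)^{2k-2}$. Taking $M$ strictly larger than $(\sup\|Df\|)^{d+1}$ (and larger than one) then absorbs every factor that appears, so that $\sup_{x\in U_i}|\pder{a_{k,I}}{x_j}|\leq C\cdot M^k$ with $C$ depending on the $C^1$ norm of $\varphi$ and on the atlas.

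The main technical step is the bound on $\sup\|D^2 f^k\|$: differentiating the chain-rule formula $Df^k(x)=Df(f^{k-1}(x))\cdots Df(x)$ produces $k$ product-rule terms, the $i$-th of which contains a factor $D^2 f$ at $f^i(x)$ together with an inner factor $Df^i(x)$, and one must check that the resulting sum $\sum_{i=0}^{k-1}\|D^2 f\|\,\|Df\|^{k-1+i}$ is still geometric in $k$ with base determined by $f$ alone. This accounting is the only delicate part; once it is done, the stated inequalities follow by direct expansion of the determinant formula for $a_{k,I}(x)$ and by collecting terms.
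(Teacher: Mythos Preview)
Your argument is correct and follows essentially the same strategy as the paper: reduce to a finite atlas of coordinate charts on which the metric is controlled, and then use chain-rule estimates on iterates of $f$ in coordinates to see that all relevant quantities grow at most exponentially in the iteration index. The paper phrases this by writing $f^k$ near a point as a composition $p_{i_1 i_2}\circ\dotsb\circ p_{i_{k-1}i_k}$ of finitely many transition maps with uniformly bounded derivatives, and then simply asserts that any partial derivative of such a composition grows at most exponentially; your write-up is in fact more detailed, since you make explicit the determinant formula for the coefficients $a_{k,I}$ and carry out the inductive bound $\|D^2 f^k\|\lesssim k\,\|Df\|^{2k-2}$ that the paper leaves implicit.
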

\begin{proof}
Equation~\eqref{firstFact} is a basic fact.

The rest is a straightforward consequence of realizing a self map
of a manifold as being made up of a bunch of maps between
different coordinate patches in $\R^n$.
That is, one chooses an open cover of patches $U_i$ of $X$.
Each patch is realized in $\R^n$ as a round ball.
Thinking of each patch as lying in $\R^n$
then we can find explicit maps from between
open subsets of $\R^n$ of the form 
$p_{ij}\colon U_i\cap f^{-1}(U_j)\to U_j$.
By shrinking each open ball $U_i$ a small amount
the resulting patches still cover $X$ but the derivatives
of the maps $p_{ij}$ are all now bounded (since we are working
on relatively compact subsets of the previous maps $p_{ij}$).

Then given any $x$ we can keep track of which patch
$f^k(x)$ is in at each time and can then realize
the map $f^k(x)$ as a composition 
$p_{i_1i_2}\circ p_{i_2i_3}\circ\dotsb\circ p_{i_{k-1}i_k}$.
Since each partial derivative of each $p_{ij}$ is uniformly bounded
then any partial derivative of the composition grows at most
exponentially with $k$ and we are done.
\end{proof}

The following observation will also be useful:

\begin{lemma}
\label{holderThree}
If there are positive constants $c,C,m,M$ with 
$m < 1 < M$ such that
a sequence of smooth functions $h_k$ on an open convex set
$U\subset \R^n$ satisfies 
\[\supNorm{h_k} < c\cdot m^k\]
and 
\[\Big\Arrowvert \pder{h_k}{x_j}\Big\Arrowvert_{\text{sup}} < C\cdot M^k\]
for all $k\in 0,1,2,\dotsc$ then
$h_1+h_2+h_3+\dotsc$ converges to
a bounded continuous function 
which is H\"older of any exponent $\alpha < \frac{\log(m)}{\log(m/M)}$.
\end{lemma}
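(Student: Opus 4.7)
My plan is to first establish convergence from the geometric bound $\supNorm{h_k} < c m^k$ with $m < 1$, then obtain the H\"older regularity by the standard device of splitting the series at a well-chosen cutoff. Uniform convergence of $\sum_k h_k$ to a bounded continuous function $S$ on $U$ is immediate, since the tails $\sum_{k \geq N} \supNorm{h_k}$ are bounded by $cm^{N}/(1-m)$ and continuity is preserved under uniform limits.

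For the H\"older estimate I would fix $x, y \in U$ with $\abs{x-y}$ small (the case of separation bounded below follows trivially from $\supNorm{S} < \infty$) and use two competing bounds on each $\abs{h_k(x) - h_k(y)}$. The mean value theorem applied along the segment from $x$ to $y$, which lies in $U$ by convexity, together with the derivative hypothesis, gives $\abs{h_k(x) - h_k(y)} \leq C' M^k \abs{x-y}$ for a constant $C'$ depending only on $n$ and $C$. The crude sup bound gives $\abs{h_k(x) - h_k(y)} \leq 2c m^k$. Applying the derivative bound to the terms with $k \leq N$ and the sup bound to the terms with $k > N$ and summing the resulting geometric series yields
\[
\abs{S(x) - S(y)} \;\leq\; C_1 M^N \abs{x-y} + c_1 m^N,
\]
for constants $C_1, c_1$ independent of $N$, $x$, and $y$.

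The essential step is then to choose $N$ to balance the two terms. Setting $N = \lfloor \beta \log(1/\abs{x-y}) \rfloor$ with $\beta > 0$ converts both terms into fractional powers of $\abs{x-y}$, namely constants times $\abs{x-y}^{1 - \beta \log M}$ and $\abs{x-y}^{\beta \abs{\log m}}$. Given any $\alpha < \alpha_0 := \log m/\log(m/M)$, the two constraints $\beta \log M \leq 1 - \alpha$ and $\beta \abs{\log m} \geq \alpha$ are simultaneously satisfiable precisely because $\alpha < \abs{\log m}/\log(M/m) = \log m/\log(m/M)$; picking any such $\beta$ forces both exponents to be at least $\alpha$ and produces the desired estimate $\abs{S(x) - S(y)} \leq K \abs{x-y}^\alpha$. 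There is no substantive obstacle in the proof: the argument is a direct computation once the splitting is set up, and the critical H\"older exponent $\log m/\log(m/M)$ arises naturally from the linear trade-off between the two constraints on $\beta$, with the strict inequality $\alpha < \alpha_0$ being exactly what guarantees an admissible open interval of $\beta$ values.
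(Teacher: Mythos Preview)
Your argument is correct and is exactly the standard splitting-and-balancing computation one expects here; the paper itself gives no details, declaring only that ``the proof is elementary.'' There is nothing to compare against, and your write-up could serve as the omitted proof verbatim.
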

\begin{proof}
The proof is elementary.
\end{proof}

\subsection{Eigencurrents for Cohomologically Expanding Smooth Maps}

We will call a section $V$ of $\bigwedge^k TX$ a $k$-vector field.
We define $\|V\|_{\bounded}$ to be the $\bounded$ norm of the function
$x\mapsto \|V_x\|$. 
Whether Theorem~\ref{main}
applies to a map will depend the size of $\|\bigwedge^k Df\|$.
Replacing $f$ with an iterate does not 
affect the needed estimate so we make the following
definition.

\begin{definition}
We define $\maxMult_k$ to be the limit supremum as $j\to \infty$ of 
$\|\bigwedge^k D(f^j)\|^\frac{1}{j}$.
It follows that $\maxMult_1 \geq e^\lambda$ for any Lyapunov 
exponent $\lambda$ and that $\maxMult_k \leq \maxMult_1^k$ (\cite{federer} page 33).
\end{definition}

We let $\sheaf{B}$ be the sheaf $\forms{k-1}(\bounded)$.
The norm $\|\cdot\|_\infty$ clearly makes $\shGamma{B}$ into a Banach space.
Given a member $\current{B}\in\shGamma{B}$, since the operator
norm on each $\bigwedge^k T_xX$ is equal to the norm already
defined on $\bigwedge^k T_x^*X$ for each $x\in X$ then
$\|\current{B}\|_\infty$ is equal to supremum of the $\bounded$ norm of 
the function $x\mapsto \current{B}(V_x)$ as $V$ varies
over all $\bounded$ $k$-vector fields of norm no more than one.

\begin{theorem}
\label{boundedPotentials}
Given $f\colon X\to X$ an a map in $\cat_X$ for the 
compact orientable manifold $X$, assume that
$c\in \deRham^k(X)$ is a cohomology class (using either real or complex
deRham cohomology)
which is an eigenvector for $f^*$ with eigenvalue $\beta$.
Assume also that $\vert \beta\vert > \maxMult^{k-1}$.
Then there exists a unique eigencurrent $\current{C}$
with local $\forms{k-1}(\bounded)$ potentials
representing the class $c$.
Moreover $\current{C}$ has local $\forms{k-1}(\holder)$ potentials. 

Also, given any neighborhood $U\subset X$ of any point in
the support of $\current{C}$, then for every 
lenient current $\current{C'}$ with local $\forms{k-1}(\bounded)$ potentials
and which represents the cohomology class $c$ then
$f^k(U)\cap \supp \current{C'}\nempty$ for all large $k$.

Assume that the linear map $f^*\colon\deRham^k(X)\to\deRham^k(X)$
is dominated by a single simple real eigenvalue $r$. 
Given $\current{C}'$ any current which has local
$\forms{k-1}(\bounded)$ potentials and which represents a cohomology
class in the $\maxMult^{k-1}$
chronically expanding subspace of $\deRham^k(X)$,
then the successive rescaled pullbacks $f^{k*}(\current{C}')/r^k$
of $\current{C}'$ converge to a multiple of $\current{C}$ in the
sense of lenient currents (and thus also in the sense of currents).
\end{theorem}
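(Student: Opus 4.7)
The plan is to apply Theorem~\ref{main} to the short exact sequence
\[\sheaf{A}\lto{p}\sheaf{B}\lto{q}\sheaf{C}\]
on $X$, where $\sheaf{B}=\forms{k-1}(\bounded)$ sits as a subsheaf of the lenient $(k-1)$-currents, $q$ is the restriction of $\ed$, $\sheaf{C}=q(\sheaf{B})$, and $\sheaf{A}=\ker q$. By Lemma~\ref{pullbackBasics}, $f^*$ preserves each of these sheaves and commutes with $\ed$, giving an $f$ self cohomomorphism of the full sequence. I would identify $\shHone{X}{A}$ with $\deRham^k(X)$ via Corollary~\ref{subsheafCohomology}, which applies because bounded forms are closed under box smears (smearing is an average of pullbacks by diffeomorphic flows against a compactly supported smooth density, so it preserves $\bounded$ coefficients). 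This sends the given eigenclass $c$ to an element of $\shHone{X}{A}$ which is an eigenvector for $H^1f\shsub{A}$ with eigenvalue $\beta$.

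Next I would verify the hypotheses of Theorem~\ref{main}. The space $\shGamma{B}$ is Banach under the sup/comass norm. The inequality $\|f^{j*}\current{B}\|_\infty\leq\|\bigwedge^{k-1}Df^j\|_\infty\cdot\|\current{B}\|_\infty$ together with the definition of $\maxMult$ lets us choose any $\alpha\in(\maxMult^{k-1},|\beta|)$ and a constant $d>0$ so that $\|\mshGamma{f}{B}^j(\current{B})\|_\infty\leq d\alpha^j\|\current{B}\|_\infty$. I topologize $\shGamma{C}$ through the inclusion into $\lenient^k$; the required support semi-continuity is automatic, since a nimble test form supported off the closure of $\bigcup_i\supp\current{C}_i$ pairs to zero with each $\current{C}_i$, hence with the limit. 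Continuity of $\mshGamma{f}{C}$ and $\mGamma{q}$ is Lemma~\ref{pullbackBasics}. Softness of $\sheaf{B}$ gives $H^1(X,\sheaf{B})=0$, so every class is closed, and $W:=\K\cdot c$ lies in the $\alpha$-chronically expanding subspace because $|\beta|>\alpha$. Theorem~\ref{main} then furnishes a unique $\kappa\colon W\to\shGamma{C}$, and $\current{C}:=\kappa(c)$ is the desired eigencurrent with local $\forms{k-1}(\bounded)$ potentials; uniqueness among such representatives of $c$ is the universal property of $\kappa$. For the cohomological expansion conclusion, Lemma~\ref{softImpliesNoBasePoints} shows $c$ is base-point free, so the last clause of Theorem~\ref{main} places $\supp\current{C}$ inside the cohomologically expansive set. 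Applied to any divisor $\current{C}'$ of $c$, this yields $f^k(U)\cap\supp\current{C}'\nempty$ for all large $k$.

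The main obstacle will be promoting $\current{C}$ to have local $\forms{k-1}(\holder)$ potentials via Corollary~\ref{regularity}. I would take $\sheaf{B}'=\forms{k-1}(\holder_\alpha)$ for a small $\alpha>0$ preserved by $f^*$; the injectivity $H^1(X,\sheaf{A}')\to H^1(X,\sheaf{A})$ is Observation~\ref{holderOne}, and a smooth closed representative of $c$ from classical de~Rham theory gives a divisor in $\shGamma{C'}$ to play the role of $s(c)$. Verifying the series hypothesis~\eqref{geometricSeries} is the delicate part: for $\current{B}'\in\shGamma{B'}$, Lemma~\ref{holderTwo} in suitable coordinate charts bounds the coefficients of $f^{j*}\current{B}'$ in sup norm by a constant times $\alpha^j$ (any $\alpha>\maxMult^{k-1}$) and their first partials by $CM^j$ for some $M>1$ depending on $f$. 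Multiplying by $|a_j|\leq Cr^{-j}$ with $r=|\beta|>\alpha$ gives each term of~\eqref{geometricSeries} a sup-norm decaying like $(\alpha/r)^j$ and a derivative growing at most like $(M/r)^j$, so Lemma~\ref{holderThree} applied chart-by-chart assembles the limit into a H\"older continuous form. Corollary~\ref{regularity} then places $\kappa(c)=\current{C}$ in $\shGamma{C'}$. Finally, the rescaled convergence $f^{k*}(\current{C}')/r^k\to$ (a scalar multiple of) $\current{C}$ under the simple dominant real eigenvalue hypothesis is immediate from the corollary stated right after Theorem~\ref{main}, applied with $W$ the $\maxMult^{k-1}$-chronically expanding subspace of $\deRham^k(X)$.
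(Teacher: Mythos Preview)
Your proposal is correct and follows essentially the same route as the paper: set up $\sheaf{A}\to\sheaf{B}\to\sheaf{C}$ with $\sheaf{B}=\forms{k-1}(\bounded)$, identify $H^1(X,\sheaf{A})$ with $\deRham^k(X)$ via Corollary~\ref{subsheafCohomology}, verify the Banach and growth hypotheses using $\maxMult_{k-1}$, apply Theorem~\ref{main}, and then upgrade to H\"older potentials via Corollary~\ref{regularity} together with Observation~\ref{holderOne}, Lemma~\ref{holderTwo}, and Lemma~\ref{holderThree}. Your write-up is in fact more explicit than the paper's in a couple of places (the support semi-continuity check and the derivation of the $f^k(U)\cap\supp\current{C}'\neq\emptyset$ statement from cohomological expansiveness); the only cosmetic point is that since you start $s$ at a smooth representative of $c$, the potential $\current{B}'$ entering the series~\eqref{geometricSeries} can be taken smooth, so Lemma~\ref{holderTwo} applies as stated.
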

\begin{proof}
We let $\sheaf{B}=\forms{k-1}(\bounded)$, $\sheaf{A}$ and $\sheaf{C}$ be the
kernel and image respectively of $\sheaf{B}\lto{\ed}\lenient^k$.
By Theorem~\ref{subsheafCohomology}, $H^1(X,\sheaf{A})$ can be
canonically identified with $H^k(X,\K)$. Since $\sheaf{B}$ is $\Gamma$-acyclic
then every member of $H^1(X,\sheaf{A})$ is a closed bundle with respect
to the short exact sequence $\sheaf{A}\to\sheaf{B}\to\sheaf{C}$.

From Lemma~\ref{pullbackBasics}
there is an induced
$f$ cohomorphism of the short exact sequence
$\sheaf{A}\lto{\iota}\sheaf{B}\lto{\ed}\sheaf{C}$.
Also $\shGamma{C}$ is a space of lenient currents by Lemma~\ref{pullbackBasics} 
and thus has a natural structure as a topological vector space.
If a sequence $\current{B}_i\in\shGamma{B}$ converges to
$\current{B}\in\shGamma{B}$ then
$<\ed \current{B}_i,\varphi>=\int_X B_i\wedge \ed \varphi=\int_X B\wedge \ed\varphi=<\ed\current{B},\varphi>$
so the map $\ed\colon \shGamma{B}\to\shGamma{C}$ is continuous.

The cohomomorphism $\mshGamma{f}{B}$ is pullback $f^*$
of differential forms. 
Fixing any real $\alpha$ satisfying $\maxMult_{k-1} < \alpha <\vert \beta\vert$
it is clear from the definition of $\maxMult_{k-1}$ 
that one can choose a real $d >0$ such that 
$\|\bigwedge^{k-1} D(f^\ell)\| \leq d\cdot \alpha^\ell$ for all $\ell\in \N$.
The $\ell^\text{th}$ pullback $f^{\ell*}(\current{B})$ of $\current{B}\in\shGamma{B}$
satisfies $\|f^{\ell*}(\current{B})\|_\infty=
\sup_{V}\|\current{B}(\bigwedge^k D(f^\ell)(V))\|_\infty$
where the supremum is taken over all $k$-covector fields $V$ with $\|V\|_\infty \leq 1$.
However $\bigwedge^k D(f^\ell)(V)$ is a $k$-covector field of norm no more than
$\|\bigwedge^k D(f^\ell)\|$, so 
$\|f^{\ell*}(\current{B})\|_\infty \leq \|\current{B}\|_\infty\cdot \bigwedge^k D(f^\ell)\|_\infty\leq d\cdot \alpha^\ell\|\current{B}\|_\infty$.

Given any $W$ in the $\maxMult_{k-1}$ chronically expanding subspace of 
$H^k(X,\K)$, we can alter our choice of $\alpha > \maxMult_{k-1}$
so that $W$ also lies in the $\alpha$ chronically expanding subspace of 
$H^k(X,\K)$.

We can therefore apply Theorem~\ref{main} to conclude that there 
is a (unique) map $\kappa\colon W\to \shGamma{C}$ such that
$f^*\kappa=\kappa f^*$, where the first $f^*$ is pullback
of currents and the second is pullback on $H^k(X,\K)$.

In fact $\kappa(W)$ lies in the space of currents with locally H\"older
potentials (meaning $\forms{k-1}(\holder)$ potentials) by applying
Corollary~\ref{regularity} in conjunction with Observation~\ref{holderOne},
Lemma~\ref{holderTwo} and Lemma~\ref{holderThree}.
The second half of the Theorem is a consequence of
equation~\eqref{iterationFormula}.
\end{proof}

\begin{remark}
Theorem~\ref{boundedPotentials} 
gives regular degree one eigencurrents for every eigenvalue of 
$f^*\colon H^1(X,\K)\to H^1(X,\K)$ of norm greater one without
requiring any constraints on the local behavior of $f$. The degree
one eigencurrents seem to be, in some sense, more robust than currents
of lower dimension, including invariant measures. Moreover since 
codimension one closed submanifolds are closed currents with local
$\forms{0}(\bounded)$ potentials then successive rescaled preimages
of such manifolds in the right cohomological class will converge
to the eigencurrent.
\end{remark}

\begin{remark}
The fact that eigencurrents constructed via Theorem~\ref{boundedPotentials}
 have local potentials which are forms does
not imply their support has positive Lebesgue measure 
as the classical example of a monotonic nonconstant function which
is constant on a set of full measure shows.
\end{remark}

\begin{remark}
\label{moreGenerality}
The assumption that $f^*\colon \deRham^1(X)\to\deRham^1(X)$
is dominated by a single simple real eigenvalue $r$ is
not essential, but just meant to handle the simplest case.
In fact the proof actually shows that if $W$ lies in the 
$\maxMult_{k-1}$ chronically expanding subspace of 
$H^k(X,\K)$ then every current in the invariant plane
$\kappa(W)\subset \shGamma{C}$ of currents
has local $\forms{k-1}(\holder)$ potentials
and any current with cohomological class in $W$ with
local $\forms{k-1}(\bounded)$ potentials is attracted
to $\kappa(W)$ under successive rescaled pullback.
\end{remark}

Since measures are of particular interest in dynamics, we note that 
$H^1(X,\forms{n-1}(\bounded))=H^n(X,\K)=\K$ by Corollary~\ref{subsheafCohomology}
so there is a unique $f^*$ eigenvalue and it is precisely the topological degree of $f$.
We thus obtain:

\begin{corollary}
Given that $\maxMult_{n-1} < \deg f$ then there is a unique
dimension zero eigencurrent $\current{C}$ with $\forms{k-1}(\bounded)$ potentials
(and in fact it has $\forms{k-1}(\holder)$ potentials)
and the successive rescaled preimages of any $\current{C}'$
with $\forms{k-1}(\bounded)$ potentials converge to $\current{C}$.
If additionally there is no point $x\in X$ about which $f$ is locally
an orientation reversing diffeomorphism 
then $\current{C}$ (and every other member of $\kappa(W)$) 
is a positive distribution and is therefore a Radon measure.
\end{corollary}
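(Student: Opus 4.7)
The plan is to apply Theorem~\ref{boundedPotentials} with $k=n$ to obtain the first assertions, and then to upgrade the invariant eigencurrent to a positive distribution by means of Observation~\ref{positivity}. By Corollary~\ref{subsheafCohomology} applied with $\sheaf{B}=\forms{n-1}(\bounded)$, one has $H^1(X,\sheaf{A})\cong H^n(X,\K)=\K$, where $\sheaf{A}$ is the sheaf of $\ed$-closed members of $\forms{n-1}(\bounded)$. The induced action of $f^*$ on the top deRham group is multiplication by the topological degree $\deg f$, so the only eigenvalue of $f^*$ on $H^1(X,\sheaf{A})$ is $\deg f$, and the hypothesis $\maxMult_{n-1}<\deg f$ is exactly the hypothesis $|\beta|>\maxMult_{k-1}$ of Theorem~\ref{boundedPotentials}. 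Because the eigenspace is one-dimensional this eigenvalue is automatically a simple dominant real eigenvalue, so Theorem~\ref{boundedPotentials} delivers the unique eigencurrent $\current{C}$ with local $\forms{n-1}(\bounded)$ potentials, the upgrade to local $\forms{n-1}(\holder)$ potentials, and convergence of rescaled pullbacks of any other such divisor to a multiple of $\current{C}$.

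For the positivity assertion I will apply Observation~\ref{positivity} with $\sheaf{C}'\subset\sheaf{C}$ defined to be the subsheaf of sections that pair nonnegatively with every nonnegative smooth test function on the relevant open set. Closure of $\sheaf{C}'$ under multiplication by elements of $\positive$ is immediate, and closure of $\shGamma{C'}$ in $\shGamma{C}$ under the weak topology of lenient currents follows at once because pointwise limits of nonnegative linear functionals remain nonnegative. The main step is condition (ii), namely $f\shsub{C}(\sheaf{C}')\subset\sheaf{C}'$. By formula~\eqref{pushforward}, on the regular set $R$ one has
\[
f_\star(\varphi)(y)=\frac{1}{\deg f}\sum_i \sigma_i\,\varphi(x_i),
\]
where $x_i$ ranges over the preimages of $y$ and $\sigma_i\in\{\pm 1\}$ is the local orientation sign of $f$ at $x_i$. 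The hypothesis that $f$ is nowhere locally an orientation-reversing diffeomorphism forces $\sigma_i=+1$ throughout the full-measure set $R$, so $f_\star$ sends nonnegative test functions to functions nonnegative on $R$. Combined with the defining relation $<f^*(\current{C}),\varphi>=<\current{C},f_\star(\varphi)>$, this shows that $f^*$ preserves positivity. I expect this verification, which is the one place where the local orientation-preservation hypothesis enters, to be the main obstacle; the rest is bookkeeping.

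It remains to exhibit a positive divisor in $\shGamma{C'}$ representing a nonzero member of $W$. Any smooth positive volume form $\omega$ on $X$ (e.g.\ a Riemannian volume form) is locally $\ed$-exact by the Poincar\'e lemma and hence locally lies in $q(\forms{n-1}(\bounded))$, so $\omega\in\shGamma{C'}$; and since $H^n(X,\K)=\K$, the cohomology class of $\omega$ must be a nonzero scalar multiple of the generator of $W$. All hypotheses of Observation~\ref{positivity} are then satisfied, whence $\current{C}\in\shGamma{C'}$; by linearity of $\kappa$ the same is true of every positive scalar multiple of $\current{C}$ in $\kappa(W)$. Finally, a positive distribution on a smooth manifold extends via the Riesz representation theorem to a positive Radon measure, completing the proof.
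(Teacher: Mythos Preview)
Your proof is correct and follows essentially the same route as the paper: invoke Theorem~\ref{boundedPotentials} with $k=n$ (using Corollary~\ref{subsheafCohomology} to identify $H^1(X,\sheaf{A})\cong H^n(X,\K)=\K$ with eigenvalue $\deg f$), and then apply Observation~\ref{positivity} after checking that $f^*$ preserves positivity of distributions. The paper's own proof is a two-line sketch that simply asserts ``$f^*$ pulls back positive distributions to positive distributions'' and cites the positivity observation; you have filled in exactly the verification the paper leaves implicit, namely that formula~\eqref{pushforward} together with the orientation hypothesis forces $f_\star$ to preserve nonnegativity of test functions, and you have also supplied the missing positive divisor (a smooth volume form) needed to trigger Observation~\ref{positivity}.
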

\begin{proof}
Since $f^*$ pulls back dimension zero currents (i.e. distributions) which are positive
to distributions which are positive then by
Corollary~\ref{positivity} the distribution $\current{C}$ is positive.
It is therefore a Radon measure (see e.g. \cite{hirsch-lacombe} page 270). 
\end{proof}

\begin{remark}
In the case where $f$ is orientation reversing on 
some parts of $X$ (but not on all of $X$) some special remarks apply.
If it happens that successive rescaled images of some point
converge to a dimension zero eigencurrent then since preimages of points are counted
with multiplicity then when pulled back through a portion of $X$
on which $f$ reverses orientation the sign of a point is flipped.
Thus in this case the eigencurrent may not describe so much the
distribution of preimages as the {\it relative density}
of preimages counted negatively as compared to those counted positively. 
The number of actual preimages of a point 
may grow exponentially faster than the degree
of the map in such cases so that dividing by
the degree does not yield a measure in the limit 
unless some such ``cancellation'' takes place in the limit. One would expect that 
the corresponding eigencurrents have local potentials which are not of bounded variation
in such a case.
\end{remark}

\subsection{Eigencurrents for Smooth Covering Maps}

We will call a covering map which is locally a diffeomorphism
a {\it smooth covering map}.
We now consider the special case of smooth self covering maps 
$f\colon X\to X$ of a compact smooth orientable manifold $X$.
We show that in this case we have a substantially broader
collection of currents whose successive pullbacks
converge to an eigencurrent, albeit we need different
estimates for Theorem~\ref{main} to apply. We will pull back currents
by pushing forward forms with $f_\star$. Since the regular set
of $f$ is all of $X$ then $f_\star$ is a well defined operator
from smooth forms to smooth forms.

\begin{definition}
For a map satisfying the hypothesis of Proposition~\ref{pushforwardResults}
we define the operation $f^*$ from currents on $X$ to currents on $Y$
by 
\[<f^*(\current{C}),\alpha>\equiv <\current{C},f_\star(\alpha)>.\]
Clearly $f^*$ preserves the dimension of a current.
\end{definition}

Let $\sheaf{M}_{k-1}$ be the sheaf 
for which
$\sheaf{M}_{k-1}(U)$ is the Banach space of bounded linear operations 
on the topological vector space comprised of $\forms{k-1}(\smooth)(U)$
with the $\|\cdot\|_\infty$ norm. Equivalently, $\sheaf{M}_{k-1}$
is the sheaf of dimension $k-1$ currents of finite mass.

Choose a Riemannian metric on $X$.
If $f\colon X\to X$ is a smooth cover then for each $x\in X$
and each $\ell\in \N$, $D_x(f^\ell)\colon T_x X\to T_{f^\ell(x)}X$
is invertible. We let $\nu_k(x,\ell)$ be the operator norm of 
the inverse of $\bigwedge^k D_x(f^\ell)\colon \bigwedge^k T_xX\to\bigwedge^k T_{f^\ell(x)} X$.
We define $\nu_k(\ell)=\sup_{x\in X} \nu_k(x,\ell)^{1/\ell}$.
We define $\nu_k=\limsup_{\ell\to\infty}\nu_k(\ell)$.
The iterated pushforward operation $f^\ell_\star\colon\forms{k-1}(\smooth)(X)\to\forms{k-1}(\smooth)(X)$
satisfies $\|f^\ell_\star(\varphi)\|_\infty\leq \nu_k(\ell)\cdot \|\varphi\|_\infty$
as is straightforward to verify. If $f$ is invertible
then $\nu_k$ is a bound on the growth of the
$k^\text{th}$ wedge product of the derivative under
$f^{-1}$. For non-invertible $f$, $\nu_k$ represents a bound
on the growth of the $k^\text{th}$ wedge product of the derivative 
under any sequence of successive branches of $f^{-1}$.

\begin{theorem}
\label{boundedOperatorPotentials}
Given $f\colon X\to X$ a smooth self covering map and that
$c\in \deRham^k(X)$ is a cohomology class (using either real or complex
deRham cohomology)
which is an eigenvector for $f^*$ with eigenvalue $\beta$.
Assume also that $\vert \beta\vert > \nu_{k-1}$.
Then there exists a unique eigencurrent $\current{C}$
with local $\sheaf{M}_{k-1}$ potentials
representing the class $c$.
Moreover $\current{C}$ has local $\forms{k-1}(\continuous)$ potentials. 
Consequently $\current{C}$ is a current of order one.

Also, given any neighborhood $U\subset X$ of any point in
the support of $\current{C}$, then for every 
lenient current $\current{C'}$ with local $\sheaf{M}_{k-1}$ potentials
and which represents the cohomology class $c$ then
$f^k(U)\cap \supp \current{C'}\nempty$ for all large $k$.

Assume that the linear map $f^*\colon\deRham^k(X)\to\deRham^k(X)$
is dominated by a single simple real eigenvalue $r$. 
Given $\current{C}'$ any current which has local
$\sheaf{M}_{k-1}$ potentials and which represents a cohomology
class in the $\nu^{k-1}$
chronically expanding subspace of $\deRham^k(X)$,
then the successive rescaled pullbacks $f^{k*}(\current{C}')/r^k$
of $\current{C}'$ converge a multiple of $\current{C}$.
\end{theorem}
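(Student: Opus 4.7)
My plan is to mirror the proof of Theorem~\ref{boundedPotentials}, with $\sheaf{B}=\sheaf{M}_{k-1}$ taking the role of $\forms{k-1}(\bounded)$. I let $\sheaf{A}=\ker(\ed|_{\sheaf{B}})$ and $\sheaf{C}=\ed(\sheaf{B})$, giving a short exact sequence $\sheaf{A}\to\sheaf{B}\to\sheaf{C}$. The sheaf $\sheaf{B}$ contains the smooth $(k-1)$-forms and is closed under smears (which produce smooth forms, trivially of finite mass on compact $X$), so Proposition~\ref{sheafIsSoft} and Corollary~\ref{subsheafCohomology} canonically identify $H^1(X,\sheaf{A})$ with $H^k(X,\K)$ and make every element a closed bundle for this sequence. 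The induced $f$-self cohomomorphism comes from pullback of currents via Lemma~\ref{pullbackBasics}, and the topological prerequisites of Theorem~\ref{main} --- $\shGamma{B}$ being Banach under the mass norm, continuity of $\ed\colon\shGamma{B}\to\shGamma{C}$ and of $\mshGamma{f}{C}$, and the support-closure condition --- are routine to verify.

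The key estimate is $\|(f^*)^\ell\current{B}\|_{\sheaf{M}}\leq\nu_{k-1}(\ell)\|\current{B}\|_{\sheaf{M}}$. By the defining duality $\langle f^{\ell*}\current{B},\varphi\rangle=\langle\current{B},f^\ell_\star\varphi\rangle$, it suffices to bound $\|f^\ell_\star\varphi\|_\infty$ for smooth $\varphi$. Writing $f^\ell_\star\varphi$ via the branch-sum formula~\eqref{pushforward}, each of the at most $\deg f^\ell$ local-inverse branches contributes a term of pointwise norm at most $\nu_{k-1}(\ell)\|\varphi\|_\infty/\deg f^\ell$ by the definition of $\nu_{k-1}$, so summing yields $\|f^\ell_\star\varphi\|_\infty\leq\nu_{k-1}(\ell)\|\varphi\|_\infty$. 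Picking any $\alpha$ with $\nu_{k-1}<\alpha<|\beta|$, we have $\|(f^*)^\ell\|_{\text{op}}\leq d\alpha^\ell$ for some $d>0$, and the line $W$ spanned by $c$ sits in the $\alpha$-chronically expanding subspace of $H^k(X,\K)$. Theorem~\ref{main} then produces a unique invariant current $\current{C}\in\shGamma{C}$ representing $c$, places its support inside the set of cohomologically expansive points (yielding the $f^k(U)\cap\supp\current{C}'\nempty$ conclusion), and, combined with the single-eigenvalue hypothesis and equation~\eqref{iterationFormula}, gives the rescaled-pullback convergence.

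Upgrading to $\forms{k-1}(\continuous)$ potentials is the most delicate part and is where I expect the main obstacle. I would apply Corollary~\ref{regularity} with $\sheaf{B}'=\forms{k-1}(\continuous)$: this is $f^*$-invariant because for a smooth covering map the current pullback of a continuous form coincides with its classical form-pullback, which is again continuous, and the injection $H^1(X,\sheaf{A}')\hookrightarrow H^1(X,\sheaf{A})$ follows by applying Corollary~\ref{subsheafCohomology} to $\sheaf{B}'$. The corollary's convergence hypothesis requires that a series $\sum_j a_j(f^*)^j\current{B}$ with $|a_j|\lesssim|\beta|^{-j}$ converges in $\shGamma{B}$ to a member of $\shGamma{B}'$; mass convergence is immediate from $\nu_{k-1}<|\beta|$, but showing the limit is continuous rather than merely finite-mass is the hard step, since continuous forms are not closed in the mass topology. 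This is precisely where the smooth-covering-map hypothesis must do work beyond what Theorem~\ref{boundedPotentials} required: one needs to re-express $(f^*)^j\current{B}$ using local inverse branches so that the operator governing sup-norm growth is $f^\ell_\star$ on continuous forms (with sup-norm bound $\nu_{k-1}(\ell)$), rather than the classical pullback of forms (whose sup-norm bound would be $\maxMult_{k-1}(\ell)$, not assumed small here), and then to convert the mass geometric series into a $C^0$-Cauchy sequence. Once continuous potentials are established, $\current{C}$ is automatically a current of order one, since $\ed$ of a continuous form pairs continuously with $C^1$ test forms.
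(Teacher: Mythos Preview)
Your proposal follows essentially the same route as the paper's proof: set $\sheaf{B}=\sheaf{M}_{k-1}$, let $\sheaf{A}$ and $\sheaf{C}$ be the kernel and image of $\ed$, identify $H^1(X,\sheaf{A})\cong H^k(X,\K)$ via Corollary~\ref{subsheafCohomology}, verify the Banach and continuity hypotheses, derive the operator bound on $(f^*)^\ell$ from the predual estimate $\|f^\ell_\star\varphi\|_\infty\lesssim\nu_{k-1}(\ell)^\ell\|\varphi\|_\infty$, and invoke Theorem~\ref{main}. The paper does exactly this, and for the regularity upgrade it gives a one-line citation of Corollary~\ref{regularity} together with Observation~\ref{holderOne}, Lemma~\ref{holderTwo} and Lemma~\ref{holderThree}, with no further detail.

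You correctly flag the upgrade to $\forms{k-1}(\continuous)$ potentials as the delicate step, but the strategy you sketch for it does not work. You already observed that for a continuous form $\current{B}$ the current pullback $f^*\current{B}$ (defined via duality with $f_\star$) coincides with the classical form pullback; consequently $\|(f^*)^j\current{B}\|_\infty$ is controlled by $\|\bigwedge^{k-1}Df^j\|$, hence by $\maxMult_{k-1}$, not by $\nu_{k-1}$. There is no way to ``re-express $(f^*)^j\current{B}$ using local inverse branches'' so as to replace this by an $f_\star$-type bound: the inverse branches assemble $f_\star$, which is the \emph{adjoint} of $f^*$, not a decomposition of it, and your own first observation already pins $f^*$ down as the classical pullback on continuous forms. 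Under the hypothesis $|\beta|>\nu_{k-1}$ alone, the series $\sum_j\beta^{-j}(f^*)^j\current{B}$ is guaranteed to converge in mass norm but not in $C^0$, so Corollary~\ref{regularity} with $\sheaf{B}'=\forms{k-1}(\continuous)$ does not close. It is worth noting that the paper's own proof of this step runs into the same wall: the lemmas it cites (Lemma~\ref{holderTwo} and Lemma~\ref{holderThree}) are keyed to $\maxMult_{k-1}$ rather than $\nu_{k-1}$, so the continuity of the local potentials appears to need either the stronger hypothesis $|\beta|>\maxMult_{k-1}$ or a separate argument that neither you nor the paper supplies.
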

\begin{proof}
We let $\sheaf{A}$ and $\sheaf{C}$ be the kernel
and image respectively of $\ed \colon \sheaf{M}_{k-1}\to \curDeg{k}$.
Since $\ed f_\star=f_\star \ed$ then pullback of currents
gives an $f$ cohomomorphism of the short exact sequence of sheaves
$\sheaf{A}\to\sheaf{M}_{k-1}\to\sheaf{C}$.

Since $\Gamma\sheaf{M}_{k-1}$ is the continuous linear operators on
a normed vector space then it is a Banach space. From the observations
previous to the statement of Theorem~\ref{boundedOperatorPotentials}
one concludes that for any $\alpha > \nu_{k-1}$ there is a constant
$d > 0$ such that $\|f^{\ell *}(\current{B})\|\leq d\cdot \alpha^k \|\current{B}\|$
for all $\ell \in \N$.

Since $\shGamma{C}$ is a space of currents it is naturally a topological
vector space over $\K$.

The map $f^*\colon\shGamma{C}\to\shGamma{C}$ is continuous since 
if $\current{C}_i\to \current{C}$ in $\shGamma{C}$ then 
$<f^*(\current{C}_i),\varphi>=<\current{C}_i,f_\star(\varphi)>\to
<\current{C},f_\star(\varphi)>=<f^*(\current{C}),\varphi>$.

If $\current{P}_i\to\current{P}$ in $\Gamma\sheaf{M}_{k-1}$
(using the Banach space structure)
then $\|\current{P}_i-\current{P}\|\to 0$ by assumption
then $\|\current{P}(\ed \varphi)-\current{P}_i(\ed \varphi)\|\leq
\|\current{P}-\current{P}_i\|\cdot \|\ed \varphi\|\to 0$.
Hence $<\ed \current{P}_i,\varphi>=\current{P}_i(\ed\varphi)\to\current{P}(\ed \varphi)
=<\ed \current{P},\varphi>$ and so we conclude that 
the map $\ed\colon \Gamma\sheaf{M}_{k-1}\to\shGamma{C}$ is continuous. 

Given any $W$ in the $\nu_{k-1}$ chronically expanding subspace of 
$H^k(X,\K)$, we can alter our choice of $\alpha > \nu_{k-1}$
so that $W$ also lies in the $\alpha$ chronically expanding subspace of 
$H^k(X,\K)$.

We can therefore apply Theorem~\ref{main} to conclude that there 
is a (unique) map $\kappa\colon W\to \shGamma{C}$ such that
$f^*\kappa=\kappa f^*$, where the first $f^*$ is pullback
of currents and the second is pullback on $H^k(X,\K)$.

In fact $\kappa(W)$ in the currents with locally continuous
potentials by applying
applying Corollary~\ref{regularity} in conjunction with Observation~\ref{holderOne},
Lemma~\ref{holderTwo} and Lemma~\ref{holderThree}.
The second half of the Theorem is a consequence of
equation~\eqref{iterationFormula}.
\end{proof}

\begin{proposition}
\label{preimagesOfSubmanifolds}
Let $Y$ be an oriented codimension $k$ submanifold of $X$.
If the cohomological class of $Y$ (as a current) lies in
the $\nu_{k-1}$ chronically expanding subspace of $H^k(X,\K)$
then the successive rescaled preimages of $Y$ converge to
the invariant plane of currents $\kappa(W)$. 
If $f^*\colon H^k(X,\K)\to H^k(X,\K)$ is dominated by a single
real eigenvalue $r > \nu_{k-1}$ then the successive rescaled preimages of $Y$
converge to a multiple (possibly zero) of the $r$ eigencurrent.
In particular, if $\nu_{n-1} < \deg f$ then the successive
rescaled preimages of any point converge to the unique
invariant measure with $\sheaf{M}_{n-1}$ potentials.
\end{proposition}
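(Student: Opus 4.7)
The plan is to apply Theorem~\ref{boundedOperatorPotentials} directly to the current of integration $[Y]$, so the only real work is to verify that $[Y]$ admits local $\sheaf{M}_{k-1}$ potentials and that the iterated $f^*$-pullbacks of $[Y]$ really do coincide with the currents of integration over the iterated preimage submanifolds.

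For the local potentials I would use a half-submanifold trick. About any $p \in Y$, choose a bounded coordinate chart $(x_1,\dotsc,x_n)$ in which $Y$ is cut out by $\{x_1 = \dotsb = x_k = 0\}$ with its given orientation. Let $H = \{x_1 \geq 0,\, x_2 = \dotsb = x_k = 0\}$ intersected with the chart; this is an oriented submanifold of dimension $n-k+1$, and Stokes' theorem gives $\ed[H] = \pm [Y]$ locally as currents. Because the chart is bounded, $H$ has finite $(n-k+1)$-volume, so $\vert\langle [H],\phi\rangle\vert \leq \mathrm{vol}(H)\cdot \supNorm{\phi}$ for every smooth test form $\phi$, and $[H]$ is a local section of $\sheaf{M}_{k-1}$ of the correct degree to serve as a potential for $[Y]$. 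At points of $X\setminus Y$ the zero current is a local potential trivially, so $[Y]$ has local $\sheaf{M}_{k-1}$ potentials about every point of $X$.

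Because $f$ is a smooth covering map it is a local diffeomorphism, so $f^{-1}(Y)$ is a smooth oriented codimension $k$ submanifold and a direct application of formula~\eqref{fundamental} on each sheet of the cover identifies the lenient pullback $f^*[Y]$ with the integration current $[f^{-1}(Y)]$, with the signs coming from the signed local degree of $f$ on each sheet. Iterating, the ``successive rescaled preimages of $Y$'' in the statement are exactly the currents $r^{-j}(f^j)^*[Y]$ to which Theorem~\ref{boundedOperatorPotentials} applies, and the first two convergence claims are instances of that theorem. For the final statement take $Y$ to be a single point, a codimension $n$ oriented submanifold; then $H^n(X,\K)\cong \K$ is one-dimensional with $f^*$ acting as multiplication by $\deg f$, so the hypothesis $\nu_{n-1} < \deg f$ places $[Y]$ in the $\nu_{n-1}$ chronically expanding subspace, and the previous conclusion yields convergence of the rescaled preimages to the unique invariant measure with $\sheaf{M}_{n-1}$ potentials.

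The step that I expect to require the most care is the identification $f^*[Y] = [f^{-1}(Y)]$ in the sense of \emph{lenient} currents, because lenient pullback is defined via $f_\star$ on nimble forms rather than through a naive pullback of submanifolds. However, since $f$ is a local diffeomorphism this reduces to a direct change-of-variables computation on each sheet using Proposition~\ref{pushforwardResults}, so the obstacle is really bookkeeping with signs and sheet multiplicities rather than any substantive analytic difficulty; the half-submanifold potential construction itself is transparent.
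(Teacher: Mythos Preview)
Your proposal is correct and follows essentially the same route as the paper: reduce to Theorem~\ref{boundedOperatorPotentials} by exhibiting local $\sheaf{M}_{k-1}$ potentials for $[Y]$, and construct those potentials via a half-plane (half-submanifold) whose boundary is $Y$ in a local chart. The paper's proof is terser---it does not spell out the identification $f^*[Y]=[f^{-1}(Y)]$ or the specialization to points---so your added bookkeeping on those points is extra care rather than a different argument.
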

\begin{proof}
This follows immediately from Theorem~\ref{boundedOperatorPotentials}
if we show that $Y$ has local potentials in $\sheaf{M}_{k-1}$. 
This is equivalent to showing that locally $Y=\ed P$
where $<P,\varphi> \leq a\cdot \|\varphi\|_\infty$ for some $a > 0$.
Let $B$ be a ball in $\R^n$ and $Y_0$ a $k$-plane in $\R^n$.
Then there is a $k+1$ half plane $P$ such that, as currents in 
$U$, $\partial P=Y_0$. Moreover it is clear that
$<P,\varphi>\leq a\|\varphi\|_\infty$ for some real $a > 0$.
(There are also local potentials for $Y$ which are given by forms
with $L^1_\text{loc}$ coefficients. These can be constructed by choosing a projection
$\pi$ from $U\setminus Y_0$ to a codimension one cylinder $C$ with axis $Y_0$,
and choosing a volume form $\sigma$ on $C$. The local potential is the
pullback $\pi^*(\sigma)$.)
\end{proof}

\begin{remark}
As with Theorem~\ref{boundedPotentials}, Theorem~\ref{boundedOperatorPotentials}
gives regular degree one eigencurrents for every eigenvalue of 
$f^*\colon H^1(X,\K)\to H^1(X,\K)$ of norm greater one without
requiring any constraints on the local behavior of $f$. 
In holomorphic dynamics much progress has been made in constructing
degree one eigencurrents and then constructing dynamically important invariant measures
via a generalized wedge product (see the references cited at the beginning
of Section~\ref{holEndSection}). 
\end{remark}

\begin{remark}
The proof of Proposition~\ref{preimagesOfSubmanifolds} 
could clearly be modified to apply to many singular manifolds as well.
\end{remark}

\section{Holomorphic Endomorphisms}
\label{holEndSection}

We now restrict our interest to holomorphic dynamics.
Thus all manifolds are assumed to be complex manifolds and 
all maps are assumed to be holomorphic unless stated otherwise.

Holomorphic endomorphisms of the Riemann sphere have been
studied in great detail. For endomorphisms much 
of the theory is still in its beginnings. Much attention
has been paid to holomorphic automorphisms of $\C^2$ \cite{friedland-milnor},
\cite{forn:henon},
\cite{hubbard_oberste-vorth:henon1}, \cite{hubbard_oberste-vorth:henon2},
\cite{bedfordsmillie1}, \cite{bedfordsmillie2}, \cite{bedfordsmillie3}, 
\cite{bedfordlyubichsmillie4}, \cite{bedfordsmillie5}, \cite{bedfordsmillie6}, 
\cite{bedfordsmillie7} 
or K3 surfaces \cite{cantat}, \cite{mcmullen:k3},
the major developments for endomorphisms have been
on $\Proj^n$, \cite{forn:cdhd}, \cite{forn:cdhd1},
\cite{forn:cdhd2}, \cite{forn:examples}, \cite{forn:classification},
\cite{jonsson:example}, \cite{jonsson-favre}, 
\cite{ueda:fatou}, \cite{ueda:normfam}, \cite{ueda:fatmaps}.
Recent significant developments have been made for endomorphisms
of Kahler manifolds in \cite{dinhsibony-greenCurrents}.
The paper \cite{dinhsibony-greenCurrents} shows existence of eigencurrents
(or Green's currents) for endomorphisms of Kahler manifolds under a simple
condition on the comparative rates of growth of volume
in two different dimensions. They also show that a specific weighted 
sum of an arbitrary closed positive smooth current
will converge to the Green's current, and that the Green's
current has a H\"older continuous potential.
In this setting our theorem shows that arbitrary
(rescaled) preimages of a broader class of currents will
converge to the Green's current. 
A wide variety of results have been proven in these various circumstances
either showing the existence of invariant currents, showing convergence
of currents to invariant currents, or studying the properties
of these invariant currents. We include here results that follow
from the method of this paper, which we are sure substantially overlap
with existing results. Presumably our cohomologicaly lifting 
theorem could be used in conjuction with Theorem~\ref{main}
to show existence of higher degree
$(k,k)$ currents given certain bounds on local growth rates.

\subsection{$\ed\ed^c$ Cohomology}

Let $Z$ be a complex manifold
and let $f\colon Z\to Z$ be a holomorphic self map of $Z$.
Let $\pluriharmonic$ be the sheaf of pluriharmonic functions,
let $\bounded$ be the sheaf of locally bounded functions, and 
let $\sheaf{C}$ be the sheaf of currents with local potentials
in $\bounded$, i.e. currents locally of the form $\ed\ed^cb$, for
$b$ a locally bounded function. The members of $\sheaf{C}$
are closed $(1,1)$ currents on $Z$.

Using the usual pullback on functions, and the induced pullback
on currents with function potentials (i.e. pullback the current by pulling back its local
potentials), then we get a self cohomomorphism of the exact sequence
of sheaves
\begin{equation}
\label{holSES}
\pluriharmonic\to \bounded\lto{\ed\ed^c}\sheaf{C}.
\end{equation}

We note that $H^1(Z,\pluriharmonic)$ is a finite dimensional
$\R$ vector space
as can be seen from the long exact sequence for the short
exact sequence $\R\to \hol\to \pluriharmonic$ where the
first map is inclusion and the second takes the imaginary part.
The terms $H^1(Z,\hol)\to H^1(Z,\pluriharmonic)\to H^2(Z,\R)$
give the finite dimensionality since $\hol$ is a coherent analytic
sheaf (see e.g. \cite{taylor} page 302).

Then from Theorem~\ref{main} we obtain:

\begin{corollary}
\label{cor-closedOneOne}
Given $v$ any closed eigenbundle of $\hbundles{Z}$
for $f^*$ with eigenvalue $r > 1$, 
there is a unique closed $(1,1)$ current $\current{C}$ 
such that 
$\lim_{k\to\infty} f^{k*}(\current{C}')/r^k$ converges
to $\current{C}$ for any divisor $\current{C}'$ of $v$.
\end{corollary}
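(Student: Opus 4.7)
The plan is to verify the hypotheses of Theorem~\ref{main} for the short exact sequence \eqref{holSES} and then read off the corollary as an immediate consequence, taking the finite-dimensional invariant subspace to be $W := \mathrm{span}(v) \subset \hbundles{Z}$.

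To check the hypotheses, I would argue as follows. Since $Z$ is compact, $\shGamma{\bounded}$ is a Banach space under the sup norm, and the cohomomorphism $\mshGamma{f}{\bounded}$ acts by composition $\phi \mapsto \phi \circ f$, so $\|\mshGamma{f}{\bounded}^k(\phi)\|_\infty \leq \|\phi\|_\infty$; hence the growth hypothesis holds with $\alpha = 1$ and $d = 1$. Equip $\shGamma{C}$ with the weak topology of currents: this makes it a topological vector space over $\R$, the support-closure hypothesis is the standard fact that if $\current{C}_i \to \current{C}_\infty$ weakly and each $\current{C}_i$ vanishes on an open set $U$, then pairing with test forms supported in $U$ shows the same for $\current{C}_\infty$, and continuity of $\mGamma{\ed\ed^c}$ and of $\mshGamma{f}{C}$ on currents with bounded potentials are routine (the latter since it is defined by pulling back potentials and then applying $\ed\ed^c$, both continuous operations). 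Finally, $W$ is $H^1 f\shsub{\pluriharmonic}$-invariant (by construction), consists of closed bundles (by hypothesis on $v$), and lies in the $1$-chronically expanding subspace because its eigenvalue $r$ exceeds $\alpha = 1$.

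With hypotheses verified, Theorem~\ref{main} supplies a unique $\kappa \colon W \to \shGamma{C}$ making the relevant diagram commute. Since $g \colon W \to W$ is the inverse of $H^1 f\shsub{\pluriharmonic}\rest{W}$, here just multiplication by $1/r$, choosing any divisor $\current{C}'$ of $v$ and setting $s(v) := \current{C}'$, equation~\eqref{iterationFormula} specializes to
\[
\kappa(v) = \lim_{k\to\infty} (\mshGamma{f}{C})^k(\current{C}')/r^k.
\]
Defining $\current{C} := \kappa(v)$, this is the desired invariant current, and independence of the limit from the choice of divisor representative is built into the theorem (any two divisors of $v$ differ by a member of $(\Gamma q)(\shGamma{\bounded})$, whose rescaled pullbacks decay because $r > \alpha = 1$).

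There is no real obstacle here beyond bookkeeping; the corollary is structured precisely as an instance of the master theorem, and the only mild point to watch is the continuity of $\mshGamma{f}{C}$ on currents in $\shGamma{C}$, which one handles by tracking the action through the local $\bounded$ potentials rather than attempting to define pullback on arbitrary $(1,1)$ currents.
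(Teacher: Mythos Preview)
Your proposal is correct and follows exactly the approach the paper intends: the paper simply states ``from Theorem~\ref{main} we obtain'' without spelling out the hypothesis verification, and you have supplied precisely those routine checks (Banach structure on $\shGamma{\bounded}$ with $\alpha=d=1$, weak topology on $\shGamma{C}$, $W=\operatorname{span}(v)$ in the $1$-chronically expanding subspace) before reading off the conclusion from equation~\eqref{iterationFormula}. There is nothing to add.
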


\begin{remark}
We note that the terms ``closed eigenbundle'' and ``divisor'' in Corollary~\ref{cor-closedOneOne}
are understood using the long exact sequence for~\eqref{holSES}.
\end{remark}

We can apply Corollary~\ref{regularity}
to show that 
\begin{corollary}
Any such invariant current $\current{C}$ so obtained
has H\"older continuous local potentials.
\end{corollary}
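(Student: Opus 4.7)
The plan is to apply Corollary~\ref{regularity} to the short exact sequence $\pluriharmonic\to\bounded\lto{\ed\ed^c}\sheaf{C}$, taking the subsheaf $\sheaf{B}'\subset\bounded$ to be $\forms{0}(\holder_\alpha)$, the sheaf of locally $\alpha$-H\"older functions, for some exponent $\alpha>0$ to be chosen small. The conclusion of that corollary will be that $\kappa(v)$ lies in $\Gamma(\sheaf{C}')$, where $\sheaf{C}'=\ed\ed^c(\sheaf{B}')$, which is exactly the statement that $\current{C}$ has local H\"older potentials.

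First I would verify the easier hypotheses of Corollary~\ref{regularity}. Since $f$ is holomorphic and $Z$ is compact, $\|Df\|$ is bounded in local charts; hence $f$ is Lipschitz and composition with $f$ preserves $\alpha$-H\"older regularity, so $f^*\sheaf{B}'\subset\sheaf{B}'$. The subsheaf $\sheaf{A}'\subset\sheaf{A}=\pluriharmonic$ consists of H\"older pluriharmonic functions; but pluriharmonic functions are automatically real analytic, so $\sheaf{A}'=\pluriharmonic=\sheaf{A}$ and the canonical map $H^1(Z,\sheaf{A}')\to H^1(Z,\sheaf{A})$ is the identity and trivially injective. Since $W=\K v$ is one-dimensional, it suffices to exhibit a single divisor of $v$ with local potentials in $\sheaf{B}'$; such a divisor is provided by a smooth closed real $(1,1)$-form representing $v$ (such a representative exists because $\smooth$ is a soft sheaf on $Z$ so the long exact sequence associated to $\pluriharmonic\to\smooth\to\ed\ed^c\smooth$ makes $\Gamma(\ed\ed^c\smooth)\to H^1(Z,\pluriharmonic)$ surjective, and the local $\ed\ed^c$-lemma then supplies smooth local potentials), and smooth forms are in particular H\"older.

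The main obstacle is verifying the summability condition: for every $\current{B}\in\Gamma(\sheaf{B}')$ and every scalar sequence $(a_j)_{j\geq 0}$ with $|a_j|\leq c\,r^{-j}$, the series $\sum_{j\geq 0}a_j\,f^{j*}(\current{B})$ must converge in the Banach space $\Gamma(\bounded)=L^\infty(Z)$ to a function lying in $\Gamma(\sheaf{B}')$. Sup-norm convergence is immediate, since $\|f^{j*}(\current{B})\|_\infty\leq\|\current{B}\|_\infty$ and $r>1$ makes the series absolutely convergent. For H\"older regularity of the limit I would pass to local coordinate charts chosen as in Lemma~\ref{holderTwo}; in such charts the first partial derivatives of $f^{j*}(\current{B})$ grow at most like $M^j$ for a constant $M>1$ depending on $f$ and the charts, while the summands $a_j f^{j*}(\current{B})$ have sup norm decaying like $r^{-j}$. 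Lemma~\ref{holderThree}, applied with $m=1/r$, then yields that the partial sums converge uniformly to a function which is H\"older of any exponent strictly less than $\log r/\log(rM)$. Fixing such an $\alpha$ in advance identifies the limit as a section of $\sheaf{B}'=\forms{0}(\holder_\alpha)$.

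With all hypotheses of Corollary~\ref{regularity} verified, we conclude that $\kappa(v)\in\Gamma(\sheaf{C}')$, which is precisely the statement that $\current{C}$ has H\"older continuous local potentials.
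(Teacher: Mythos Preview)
Your approach matches the paper's: apply Corollary~\ref{regularity} with $\sheaf{B}'=\forms{0}(\holder_\alpha)$, use that $\sheaf{A}'=\sheaf{A}=\pluriharmonic$ (this is exactly the paper's remark that the $\ed\ed^c$-closed H\"older functions coincide with the $\ed\ed^c$-closed $\bounded$ functions, since pluriharmonic functions are real analytic), and feed in the growth estimates of Lemma~\ref{holderTwo} and Lemma~\ref{holderThree}.

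One small imprecision worth flagging: you invoke ``first partial derivatives of $f^{j*}(\current{B})$'' for an arbitrary $\current{B}\in\Gamma(\holder_\alpha)$, but a general H\"older function need not be differentiable, so Lemma~\ref{holderTwo} does not literally apply. This is harmless for two reasons. First, the proof of Corollary~\ref{regularity} only requires the summability condition for the particular $\current{B}=\sigma(v)$ that arises there, and since you chose $s(v)$ smooth, $\frac{1}{r}f^*s(v)-s(v)$ is a smooth $\ed\ed^c$-exact form and its potential $\sigma(v)$ can be taken smooth by the local $\ed\ed^c$-lemma; then Lemmas~\ref{holderTwo} and~\ref{holderThree} apply verbatim. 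Second, if one prefers to verify the hypothesis as stated, one can replace derivatives by H\"older seminorms: $f$ is $L$-Lipschitz, so $f^{j*}(\current{B})$ has $\alpha$-H\"older constant at most $L^{j\alpha}$ times that of $\current{B}$, and for $\alpha$ small enough that $L^\alpha<r$ the series of H\"older seminorms converges. The paper's own proof is terser than yours and glosses over the same point.
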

\begin{proof}
The result follows from Lemma~\ref{holderOne}, Lemma~\ref{holderTwo}, the fact
that the $\ed\ed^c$ closed H\"older continuous functions are the same
as the $\ed\ed^c$ closed $\bounded$ functions and
from Corollary~\ref{regularity}.
\end{proof}

Also from Observation~\ref{positivity},
\begin{corollary}
If $v$ has a plurisubharmonic
section the current $\current{C}$ is positive.
\end{corollary}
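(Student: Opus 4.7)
The plan is to apply Observation~\ref{positivity} directly, with $\sheaf{C}'$ taken to be the subsheaf of $\sheaf{C}$ consisting of closed positive $(1,1)$ currents with locally bounded plurisubharmonic potentials. That is, $\sheaf{C}'(U)$ consists of currents on $U$ that are locally of the form $\ed\ed^c u$ for $u$ a locally bounded plurisubharmonic function. The phrase ``plurisubharmonic section'' of $v$ is interpreted precisely as a divisor of $v$ lying in $\Gamma(\sheaf{C}')$, so the hypothesis of the corollary gives us exactly the input needed by Observation~\ref{positivity}.

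I would then verify the three structural requirements on $\sheaf{C}'$. Closure under multiplication by $\positive$ is immediate since a positive scalar times a plurisubharmonic function is plurisubharmonic. Invariance under $f\shsub{C}$ follows because pullback of currents in $\sheaf{C}$ is defined via pullback of the local potentials, and the pullback of a bounded plurisubharmonic function by a holomorphic map is again a bounded plurisubharmonic function; hence $f\shsub{C}$ maps $\sheaf{C}'$ into itself. For the closedness of $\Gamma(\sheaf{C}')$ inside $\shGamma{C}$, I would use the standard fact that a weak limit of positive closed $(1,1)$ currents is positive and closed, and that such a limit continues to admit locally bounded plurisubharmonic potentials provided the limit lies in $\sheaf{C}$ (which is given, since we are taking limits in $\shGamma{C}$).

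With these verifications in place, and with the eigenvalue $r$ of $v$ lying in $\positive$ (indeed $r > 1$ from the hypothesis of Corollary~\ref{cor-closedOneOne}), Observation~\ref{positivity} applies verbatim: the unique invariant divisor $\current{C} = \kappa(v)$ must lie in $\Gamma(\sheaf{C}')$ and is therefore a positive current. Alternatively, one can read this off directly from the formula $\current{C} = \lim_{k\to\infty} f^{k*}(\current{C}')/r^k$, where $\current{C}'$ is any positive divisor of $v$: each $f^{k*}(\current{C}')$ is positive, $r^k > 0$, and positivity is preserved under weak limits.

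The only mild subtlety, and the step I would be most careful about, is the closedness of $\Gamma(\sheaf{C}')$ inside $\shGamma{C}$, since one must be sure that the limiting current actually has bounded (not merely plurisubharmonic) local potentials. This is handled by the fact that the limit is already known a priori to lie in $\sheaf{C}$, so its local potentials already exist as bounded functions; positivity of the limit then forces these potentials to agree (up to pluriharmonic terms) with plurisubharmonic representatives, placing the limit in $\sheaf{C}'$.
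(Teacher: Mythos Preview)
Your proposal is correct and follows exactly the paper's approach: the paper simply invokes Observation~\ref{positivity} without further comment. You have filled in the verification of the three hypotheses on $\sheaf{C}'$ that the paper leaves implicit, and your alternative direct argument via $\current{C}=\lim_{k\to\infty} f^{k*}(\current{C}')/r^k$ is precisely the (one-line) proof of Observation~\ref{positivity} itself.
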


\section{Result via Invariant Sections}

We stated early on that our construction of invariant members
of $H^0(\sheaf{C})$ for a self cohomomorphism
of a short exact sequence $\sheaf{A}\to\sheaf{B}\to\sheaf{C}$
of sheaves could be done in terms of finding invariant sections
of bundles. We illustrate this here in a specific case where
we can take advantage of geometry to make further conclusions.
Finding an invariant section of a bundle is equivalent to finding
an invariant trivialization of the bundle, and we will
make our initial statement in terms of a trivialization.

Let $Z$ be a compact complex manifold. Let $f\colon Z\to Z$
be a holomorphic endomorphism. Let $p\in\hbundles{Z}$
be an eigenvector for $f^*$ with real eigenvalue $\lambda$
of norm greater than one. If $f^*$ were to have complex eigenvalues
of interest, an analogous construction can be made
to the one that follows.

We note that there is a canonical bundle map
$\tilde{f}\colon f^*(p)\to p$ which gives the
map $f$ on the base space. It is easy to show that
there is a map $\sigma\colon p\to \lambda p$
which is the identity on the base space
and takes the form $r\mapsto \lambda r + b$
on the fibers, where $b$ is a constant. What is
more, the map $\tau_\lambda$ is easily seen to be unique
up to the addition of a constant.
Then define the map $\lift{f}\colon p\to p$ to be the composition
of 
\[ p\overset{\tau_\lambda}{\to} \lambda p=f^*(p)\overset{\tilde{f}}\to p.\]
Then $\lift{f}$ is the map $f$ on the base space and takes the form
$r\mapsto \lambda r + b$ on the fibers.

Since every pluriharmonic bundle is trivial as a smooth 
bundle,
then we can choose a smooth trivialization 
$t\colon p\to \R$, i.e. $t(a+r)=\sigma(a)+r$ for any
$a\in p$, $r\in R$, where $a+r$ is computed in the fiber containing $a$.

\begin{theorem}
There is a unique continuous trivialization
$\greens\colon p\to \R$ such that:
\[\greens(a+r)=\greens(a)+r\text{\quad for $a\in p$ and $r\in \R$},\]
\[\greens(\lift{f}(a))=\lambda\cdot \greens(a)\text{\quad for $a\in p$},\]
moreover
\[\greens=\lim_{k\to\infty} \lambda^{-k}\circ t\circ \lift{f}\cp{k}\]
and the limit converges uniformly. Finally, the zero set
of $\greens$ is the image of a section $g\colon Z\to p$
and is exactly the set of points whose forward image under $\lift{f}$
remains bounded.
\end{theorem}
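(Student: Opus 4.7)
The plan is to prove existence by direct iteration of the proposed formula, and then read off uniqueness, the zero-section property, and the characterization of the zero set from the defining functional equations. Write $t_k := \lambda^{-k}\cdot t\circ \lift{f}\cp{k}\colon p\to\R$. One immediately checks (using $\lift{f}(a+r)=\lift{f}(a)+\lambda r$ on fibers and the trivialization identity $t(a+r)=t(a)+r$) that each $t_k$ is itself a continuous trivialization. The problem therefore reduces to showing that the sequence $(t_k)$ converges uniformly on $p$, and this is what I would spend most of the work on.

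The key auxiliary object is
\[
h := \lambda^{-1}\cdot t\circ \lift{f} - t\colon p\to \R.
\]
A one-line computation gives $h(a+r)=h(a)$, so $h$ descends to a continuous function on the compact base $Z$ and is therefore bounded by some constant $M$. Another one-line computation yields
\[
t_{k+1}(a)-t_k(a) \;=\; \lambda^{-k}\,h(\lift{f}\cp{k}(a)),
\]
whence $\supNorm{t_{k+1}-t_k}\leq |\lambda|^{-k} M$. Since $|\lambda|>1$, the telescoping series converges and the limit $\greens:=\lim_k t_k$ exists uniformly on $p$; continuity and the trivialization identity $\greens(a+r)=\greens(a)+r$ pass to the limit, and the equivariance $\greens(\lift{f}(a))=\lambda\greens(a)$ follows by an index shift in the defining limit. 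The \emph{main obstacle}, and really the only non-formal step, is verifying that $h$ descends to $Z$; everything else is bookkeeping.

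For uniqueness, suppose $\greens'$ is another continuous trivialization satisfying the two functional equations. Then $\phi := \greens-\greens'$ satisfies $\phi(a+r)=\phi(a)$, so it descends to a continuous function on $Z$, and it satisfies $\phi\circ\lift{f} = \lambda\cdot\phi$, which descends to $\phi\circ f = \lambda\phi$ on $Z$. Iterating gives $\phi\circ f^k = \lambda^k\phi$, and since $Z$ is compact, $\phi$ is bounded; if $\phi$ were nonzero somewhere then $|\lambda|>1$ would force unbounded values, a contradiction. Hence $\greens=\greens'$.

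For the zero-set description, the identity $\greens(a+r)=\greens(a)+r$ shows that on each fiber $\greens$ is an affine bijection onto $\R$, so $\{\greens=0\}$ meets each fiber in a single point and defines a section $g\colon Z\to p$; continuity of $g$ follows from continuity of $\greens$ and the fact that fiberwise $g$ is the preimage of $0$ under an affine chart depending continuously on the basepoint. Finally, for $a\in p$ one has $\greens(\lift{f}\cp{k}(a))=\lambda^k\greens(a)$. If $\greens(a)=0$ then the forward orbit lies in the compact set $g(Z)$, hence is bounded. Conversely, if $\greens(a)\neq 0$ then $|\greens(\lift{f}\cp{k}(a))|\to\infty$; since $\greens$ is continuous and sends bounded subsets of $p$ (i.e. subsets with bounded fiber coordinate over compact $Z$) to bounded subsets of $\R$, the forward orbit cannot be bounded. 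This gives the claimed characterization.
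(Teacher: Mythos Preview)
Your proof is correct and follows essentially the same route as the paper: your auxiliary function $h$ is exactly $\lambda^{-1}T$ for the paper's $T(a):=t(\lift{f}(a))-\lambda\,t(a)$, and both arguments reduce to the same telescoping series with the same uniqueness-by-compactness step. You supply more detail than the paper on the zero-set characterization (the paper declares it ``trivial''), but your argument there is sound.
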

\begin{proof}
Define a function $T\colon p\to\R$ by
\[T(a)\equiv t\bigl(\lift{f}(a)\bigr)-\lambda\cdot t(a).\]
Note that $T$ descends to a well defined continuous
function $T\colon Z\to \R$ since for an arbitrary $r\in \R$
one has $T(a+r)=t\bigl(\lift{f}(a+r)\bigr)-\lambda\cdot t(a+r)
=t(\lift{f}(a)+\lambda r)-\lambda\cdot(t(a)+r)=T(a)$.

One notes that since the function $T$ is necessarily bounded if $Z$
is compact then defining
\[\greens(a)\equiv t(a)+\lambda^{-1} \cdot T(a) + \lambda^{-2}T(\lift{f}(a)) +
 \lambda^{-3}T(\lift{f}\cp{2}(a))+\dotsb\]
gives a continuous function $\greens\colon p\to \R$ satisfying
the above two properties.

Assume $\greens_1$ and $\greens_2$ are two such functions.
Then $\Delta\equiv \greens_1-\greens_2\colon p\to \R$
is a function satisfying $\Delta(a+r)=\Delta(a)$ for $a\in p$ and $r\in \R$
so $\Delta$ descends to a continuous function $\Delta\colon p\to \R$
satisfying $\Delta(\lift{f}(a))=\lambda\cdot \Delta(a)$. However 
since $\lambda > 1$ one concludes that this is only possible if $\Delta\equiv 0$ since 
$M$ is compact so $\Delta(M)$ has compact image in $\R$.

It is easy to check using the definition
of $T$ that $\lambda^{-k}\circ t\circ\lift{f}\cp{k}(a)$
is exactly a partial sum of the first $k$ terms of
the above series and this gives the
convergence result. The conclusion about the section $g$ is trivial.
\end{proof}

The above construction can be carried through almost without modification
for any subspace of $\hbundles{Z}$ on which $f^*$ is expanding.
This gives an alternate way of understanding the convergence of preimages 
of sections. The point is that if $s$ is any section of $p$,
i.e. the potential of a current $C$, then $\frac{1}{\lambda}f^*(C)$
is a current with potential which is the setwise preimage
of $s$ under $\lift{f}$ (this is easy to confirm from the 
construction of $f$). The Green's trivialization $\greens$ shows
that $\lift{f}$ is uniformly repelling away from the image of 
the invariant section $g$. Thus as long as $s$ is bounded in $p$, (not 
even necessarily continuous), then the successive preimages 
of $s$ will converge uniformly to the section $g$. Since
uniform convergence of potentials implies convergence of currents
then the rescaled pullbacks of a current $C$ converge to 
the current with potential $g$. We already have this as a theorem,
so we have not restated it as such here. This is just an
alternative approach. Note that in the case where
$Z=\Proj^2$ \cite{jonsson-favre} has given far more precise control
of when the successive rescaled preimages of a current
will converge to the eigencurrent.

\subsection{Sections version with an Invariant Ample Bundle}



It is also interesting to consider the special case
where there is an invariant ample bundle with eigenvalue
$\lambda \geq 2$ an integer. Without loss of generality
we assume $\ell$ is very ample. 
The morphism of sheaves $\logabs{\cdot}\colon \hol^*\to \pluriharmonic$
induces a map from holomorphic line bundles to pluriharmonic bundles.
We let $p=\logabs{\ell}$ be the corresponding pluriharmonic bundle.

It is easy to see that there is a holomorphic map $\ell\to \ell^\lambda$
which is of the form 
$\sigma_\lambda\colon z\mapsto a z^\lambda$, $a\in \C^*$ on each fiber
and is the identity on the base space.
There is also a canonical
holomorphic map $\tilde{f}\colon f^*(\ell)\to \ell$ which
is a line bundle map and is $f$ on the base space.

One then defines the holomorphic map
$\hollift{f}\colon \ell\to\ell$ which is the composition of
\[\ell \overset{\sigma_k}{\to} \ell^k=f^*(\ell) \overset{\tilde{f}}{\to}\ell.\]
This map is of the form $z\mapsto a z^k$ on each fiber 
and is equal to the map $f\colon Z\to Z$ on the base space.
Let $\ell^*$ denote $\ell$ with its zero section removed,
so that $\logabs{\cdot}\colon \ell\to p$ is a well defined continuous map.
Since the preimage of the zero section of $\ell$ under $\hollift{f}$ is the zero
section then $\hollift{f}$ is a holomorphic self map of $\ell^*$.
It is easy to confirm that $\hollift{f}\colon \ell\to\ell$ can be rescaled so that the diagram
\[
\xymatrix{
{\ell} \ar[r]_{\hollift{f}} \ar[d]^{\logabs{\cdot}} & {\ell} \ar[d]^{\logabs{\cdot}} \\
{p} \ar[r]_{\lift{f}} & {p} \\ 
}
\]
commutes.


Our Greens trivialization $\greens\colon p\to \R$ can be pulled
back to give a Green's function $\G\colon \ell^*\to\R$ 
on the punctured bundle $\ell^*$. It satisfies 
$\G(\tilde{f}(w))=\lambda\cdot \G(w)$ and $\G(\beta w)=\G(w)+\logabs{\beta}$
for $w\in \ell$ and $\beta\in\C^*$. Since 
$\greens$ is a trivialization of an $\R$ bundle
over a compact space, $\greens$ is proper. Since
$\logabs{\cdot}\colon \ell^*\to p$ is proper then $\G$ is proper.
Thus, in this setting one can construct a Greens function that is exactly
analogous to the Green's function constructed on
$\C^{n+1}$ for a holomorphic endomorphism
of $\Proj^n$.  Potentially one could take advantage of 
the special geometry of very ample bundles to get
information about the dynamics in this situation.

\section{Bibliography}

\bibliographystyle{alpha}
\bibliography{refer}


\end{document}